\theoremstyle{plain}
\newtheorem{theorem}{Theorem}
\newtheorem{lemma}[theorem]{Lemma}
\newtheorem{corollary}[theorem]{Corollary}
\newtheorem{proposition}[theorem]{Proposition}
\newtheorem{prevtheorem}{Theorem}
\theoremstyle{definition}
\newtheorem*{definition*}{Definition}
\newtheorem*{example*}{Example}
\newcommand{\nwc}{\newcommand}
\nwc{\F}{\mathbb F}
\nwc{\N}{\mathbb N}
\nwc{\Z}{\mathbb Z}
\renewcommand{\P}{\mathbb P}
\nwc{\PGU}{\operatorname{PGU}}
\nwc{\Aut}{\operatorname{Aut}}
\nwc{\lub}{\operatorname{lub}}
\nwc{\ord}{\operatorname{ord}}
\renewcommand{\div}{\operatorname{div}}
\nwc{\mc}[1]{\mathcal{#1}}
\nwc{\scr}[1]{\mathscr{#1}}
\nwc{\la}{\langle}
\nwc{\ra}{\rangle}
\definecolor{mygreen}{rgb}{0.0, .7, 0.2}
\nwc{\red}[1]{\textcolor{red}{#1}}
\nwc{\blue}[1]{\textcolor{blue}{#1}}
\nwc{\green}[1]{\textcolor{mygreen}{#1}}
\keywords{Hermitian curve, Weierstrass semigroup, Weierstrass point, projective unitary group, two-point discrepancy}
\subjclass{14H05, 14H55}
\begin{document}

\thispagestyle{empty}

\title[Triples of rational points on the Hermitian curve]{Triples of rational points on the Hermitian curve and their Weierstrass semigroups}
\date{}

\author{Gretchen L. Matthews}
\author{Dane Skabelund}
\author{Michael Wills}

\address{Department of Mathematics, Virginia Tech, Blacksburg, VA 24061 USA}
\email{gmatthews@vt.edu, dskabelund@vt.edu, mwills24@vt.edu}
\thanks{The first and third author were partially supported by NSF DMS-1855136.}

\maketitle

\begin{abstract}
In this paper, we study configurations of three rational points on the Hermitian curve over $\F_{q^2}$ and classify them according to their Weierstrass semigroups. For $q>3$, we show that the number of distinct semigroups of this form is equal to the number of positive divisors of $q+1$ and give an explicit description of the Weierstrass semigroup for each triple of points studied. To do so, we make use of two-point discrepancies and derive a criterion which applies to arbitrary curves over a finite field. 
\end{abstract}

\maketitle

\section{Introduction} 

The Weierstrass gap sequence of a point on an algebraic curve is a classically studied object which was generalized to the gap set of a pair of points by Arbarello, Cornalba, Griffiths, and Harris in 1985 \cite{ACGH} and that of an $n$-tuple of points for $n \geq 2$ in a series of works, including those by Ballico and Kim \cite{BK}, Iishi \cite{Ishii}, and Carvalho and Torres \cite{CarvalhoTorres}.  The gap sequence $G(P_1)$ at a single point $P_1$ has many properties which do not carry over to the gap set $G(P_1, \dots, P_n)$ of $n$ distinct points $P_1, \dots, P_n$.  For example, while the cardinality of $G(P_1)$ is equal to the genus of the curve for any point $P_1$, the size of $G(P_1, \dots, P_n)$ may depend on the choice of points $P_1, \dots, P_n$ for $n \geq 2$; see,  for instance, \cite{Carvalho_V}.

The complement of the gap set, the Weierstrass semigroup, of an $n$-tuple of points on a curve over finite field can be used to construct and decode algebraic geometry codes \cite{CF1, CF, CarvalhoTorres, GKL, KP, Korchmaros, PT, TT}, determine dimensions of associated Riemann-Roch spaces, and bound the number of rational points on a curve \cite{Geil}, among other applications. In this setting, Hasse-Weil maximal curves are particularly interesting as they yield long codes. It is for these reasons that we consider Weierstrass semigroups of rational points on the Hermitian curve over the field $\F_{q^2}$. 

The Weierstrass semigroup of a pair of rational points on the Hermitian curve was determined in \cite{pairs} and found to be independent of the choice of points. This is no longer the case for Weierstrass semigroups of triples of rational points on this curve, the subject of this paper. As we will see, triples of rational points on this curve admit a neat classification by their Weierstrass semigroups. To explain further, we now give explicit definitions of the objects involved. We also direct the reader to the end of this introduction, where we have collected an outline of our notational conventions.

Given an absolutely irreducible, smooth, projective algebraic curve $\mc X$ over a finite field $\F$ and a positive integer $n<|\F|$, the Weierstrass semigroup of the $n$-tuple $(P_1, \dots, P_n)$ of  distinct points of $\mc X$ is defined as
\[
  H(P_1, \dots, P_n)
= \left\{ {\alpha} \in \N^n : \sum_{i=1}^{n} \alpha_i P_i = (f)_{\infty} \text{ for some } f \in \F(\mc X) \right\} ,
\]
where $\N$ denotes the set of nonnegative integers.

The Riemann-Roch theorem assures that the complement 
\[
  G(P_1, \ldots, P_n) = \N^n \setminus H(P_1, \ldots, P_n)
\]
of the Weierstrass semigroup is finite. This complement is called the set of Weirstrass gaps, or just \emph{gaps}, of $(P_1, \dots, P_n)$.
Each gap $\alpha = (\alpha_1,\ldots,\alpha_n)$ corresponds to an effective divisor $D_\alpha = \alpha_1 P_1 + \cdots + \alpha_n P_n$ which has a base point at at least one of the points $P_j$, i.e., which satisfies 
\begin{equation} \label{gap}
  L(D_\alpha) = L(D_\alpha - P_j)
\end{equation}
for some $j$. 
A gap $\alpha$ is called a pure gap if (\ref{gap}) is satisfied for all $j \in \{ 1, \dots, n \}$, i.e., the correpsonding divisor $D_\alpha$ has a base point at each $P_j$.
We denote the set of pure gaps at $(P_1,\ldots,P_n)$ by $G_0(P_1,\ldots,P_n)$. Pure gaps provide the machinery to generalize the notion of consecutive gaps in \cite{GKL} to a multipoint setting, providing a much better bound on the error-correcting capabilty of associated algebraic geometry codes \cite{CarvalhoTorres}. 
In the same paper, it is shown that $\alpha$ is a pure gap for $(P_1,\ldots,P_n)$ if and only if 
\[
  L(D_\alpha) = L(D_\alpha -  P_1 - \cdots - P_n) .
\]

The Weierstrass semigroups have been determined for various collections of points on particular families of curves of interest in coding theory \cite{BMZ, BQZ, CB, Cas_Tiz, TizziottiCastellanos}.

For any automorphism $\sigma$ of $\mc X$, we note that
\begin{equation} \label{aut}
  H(P_1, \ldots, P_n)=H(\sigma(P_1), \ldots, \sigma(P_n)) .
\end{equation}
Thus, if $\Aut(\mc X)$ acts doubly transitively on the set of $\F$-rational points, there are at most $\#\mc X(\F) - 2$ distinct Weierstrass semigroups of triples of rational points of $\mc X$; indeed, if we fix two distinct rational points on $\mc X$, say $P$ and $Q$, then for any distinct triple $(P_1,P_2,P_3)$ of rational points, \eqref{aut} implies that
\[
H(P_1,P_2,P_3)=H(P,Q,R)
\]
for some point $R$ in $\mc X(\F) \setminus \{P,Q\}$.
For the Hermitian curve, we will use the geometry of the curve, along with two-point discrepancies, a concept introduced by Duursma and Park in 2012 \cite{Duursma_Park}, to considerably refine this bound. In particular, as the main result of the paper we determine the following.

\begin{theorem}\label{main_theorem}
For $q > 3$, the number of distinct Weierstrass semigroups of triples of rational points on the Hermitian curve over $\F_{q^2}$ is equal to the number of positive divisors of $q+1$.
For $q=2$, there is a single three-point semigroup, and for $q=3$, there are two.
\end{theorem}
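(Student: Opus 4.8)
The plan is to reduce the counting problem to an orbit-counting problem for the action of the automorphism group on triples of rational points. Since $\Aut(\mc X)$ for the Hermitian curve is $\PGU(3,q)$, which acts doubly transitively (in fact $2$-transitively) on the $q^3+1$ rational points, I would fix two points $P$ and $Q$ and reduce, via \eqref{aut}, to studying the semigroups $H(P,Q,R)$ as $R$ ranges over the remaining rational points. The key geometric input is the stabilizer $\Gamma$ of the pair $\{P,Q\}$ (or the ordered pair $(P,Q)$) inside $\PGU(3,q)$: its orbits on $\mc X(\F)\setminus\{P,Q\}$ partition the possible third points, and by \eqref{aut} triples in the same orbit yield identical semigroups. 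So the first main step is to compute these orbits explicitly and show there are exactly as many as there are divisors of $q+1$.

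To carry this out concretely, I would choose coordinates so that $P$ and $Q$ are, say, the two points at infinity on a standard Hermitian model, and write down the stabilizer $\Gamma$ as an explicit subgroup of matrices. Using the Hermitian relation defining the curve, I would parametrize the remaining rational points and determine the orbit structure of $\Gamma$ acting on them. I expect the orbits to be indexed by some arithmetic invariant of the coordinate of $R$ — most naturally an order or a multiplicative-subgroup condition tied to the factor $q+1$ coming from the norm map $\F_{q^2}^\times \to \F_q^\times$ or from the structure of the $(q+1)$-torsion. This is where the divisors of $q+1$ should enter: the number of $\Gamma$-orbits should equal the number of distinct ``types'' of third point, which I would match to the divisor count of $q+1$.

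The counting of orbits only gives an upper bound on the number of semigroups; the harder half is to show these orbits yield pairwise distinct semigroups, so that the bound is attained. For this I would invoke the two-point discrepancy machinery of Duursma and Park together with the earlier-stated pure-gap criterion $L(D_\alpha)=L(D_\alpha-P_1-P_2-P_3)$, computing for each orbit a distinguishing invariant of $H(P,Q,R)$ — for instance the set of pure gaps, or some numerical feature such as the minimum element in a residue class — that separates the orbits. Concretely, I would derive for each orbit an explicit description of $H(P,Q,R)$ (as promised in the abstract) and then read off an invariant that takes distinct values across orbits. The small cases $q=2$ and $q=3$ I would handle separately by direct computation, since there the orbit structure degenerates (the expected indexing by divisors of $q+1$ over- or under-counts when $q+1$ is too small relative to the number of available third points).

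The main obstacle I anticipate is the lower bound: proving distinctness of the semigroups across orbits. Counting orbits of $\Gamma$ is essentially a finite group-theoretic computation once the coordinates are fixed, but showing that no two distinct orbits accidentally produce the same Weierstrass semigroup requires an explicit and complete determination of each $H(P,Q,R)$, which is where the two-point discrepancies and the geometry of the curve must be leveraged carefully. A secondary subtlety is verifying that $\Gamma$-orbits, rather than orbits of the full pairwise stabilizer including the swap of $P$ and $Q$, are the correct equivalence classes — since $H(P,Q,R)$ depends on the order of the entries, I must be careful about whether the relevant group fixes $(P,Q)$ pointwise or only as a set, and reconcile any symmetry of the semigroup under permuting coordinates with the group action.
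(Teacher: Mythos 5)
There is a genuine gap, and it occurs at the very first step of your plan: the expectation that the orbits of the pair-stabilizer (equivalently, the automorphism classes of triples) are indexed by the divisors of $q+1$ is false. The automorphism classes of Hermitian triangles are in bijection with Galois-conjugacy classes over $\F_q$ of roots of $t^{q+1}-t^q-t$ (Lemma \ref{orbits}), and there are $1+\lceil q/2\rceil$ of them --- in general strictly more than the number of divisors of $q+1$. For $q=7$, for instance, there are five automorphism classes but only four divisors of $8$: writing a representative triangle as $\{P_\infty,P_{00},P_{\alpha\beta}\}$ with $\alpha=1-\zeta$, $\zeta^{q+1}=1$, the two Galois classes of primitive $8$th roots of unity $\zeta$ give two distinct orbits which nevertheless carry the same Weierstrass semigroup. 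So orbit-counting gives the wrong upper bound, and your plan to ``show these orbits yield pairwise distinct semigroups, so that the bound is attained'' is not merely difficult --- it is false as stated. The invariant that actually classifies the semigroups is coarser than the orbit: it is the multiplicative order $d=\ord(1-\alpha)$, a divisor of $q+1$.

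Consequently the proof needs a substantial step that your outline omits entirely: showing that triangles in \emph{different} orbits but of the same type $d$ have \emph{equal} semigroups. This is where the paper does its heaviest work (Theorem \ref{thm:sigma}), computing the two-point discrepancy function $\sigma_{ij}(T)$ explicitly --- by constructing, case by case, functions with prescribed pole and vanishing orders at the three points --- and showing that it depends only on $d$, whence Corollary \ref{type_corollary}. Your second half (distinguishing semigroups across classes) is in the right spirit, though the paper's distinguishing invariant is simply the cardinality of the gap set: $N(T_d)=N(T_1)-\frac{d-1}{6d}(q+1)(2q^2-5q+5-d)$ by Propositions \ref{NT1} and \ref{NTd}, and these values are pairwise distinct over divisors $d$ of $q+1$ exactly when $q>3$; this also explains the exceptional small cases, e.g.\ for $q=3$ the types $2$ and $4$ give equal counts and indeed equal semigroups. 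Finally, your concern about ordered versus unordered triples is legitimate but minor: the paper shows $H(P,Q,R)$ is invariant under all of $S_3$ (Corollary \ref{corollary:S3}), again as a consequence of the discrepancy computation.
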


In addition, we determine the cardinality of the associated sets of gaps and pure gaps, and give an explicit description of the Weierstrass semigroups themselves, by describing a subset which generates the semigroup with respect to the operation of taking coordinate-wise maximums.
Understanding these semigroups many be useful for studying three-point algebraic geometry codes on the Hermitian curve, complementing the approach in \cite{BR}.

The remainder of the paper is organized as follows. This section concludes with a summary of notation to be used throughout the paper. Section \ref{sec:triangles} describes the orbits of three-point configurations on the Hermitian curve under the action of the automorphism group and introduces an invariant which determines the associated Weierstrass semigroups. Section \ref{section:discrepancies} introduces two-point discrepancies and derives a criterion for identifying Weierstrass gaps. The results in this section apply to any smooth projective curve; in particular, Proposition \ref{basepoint} and Corollary \ref{corollary:basepoint_G} may be of independent interest. In Section \ref{sec:sigma}, we apply this approach to triples of rational points on the Hermitian curve to see that triangles of the same type have the same Weierstrass semigroup. Counting arguments are applied in Section \ref{section:counting} to determine the total numbers of gaps of a triples of rational points, completing the proof of the classification of triples of points according to their semigroups. The semigroups themselves are described in Section \ref{section:semigps}.

{\bf{Notation.}} 
We denote the nonnegative integers by $\N$ and the finite field of $q$ elements by $\F_q$.
The multiplicative group of  nonzero elements of a field $\F$ will be written as $\F^{\times}$.
We write $\ord(\zeta)$ to denote the multiplicative order of $\zeta \in \F_q^\times$.

Given a curve $\mc X$ over a field $\F$, we denote the set of $\F$-rational points by $\mc X(\F)$ and the function field of $\mc X$ by $\F(\mc X)$. 
For $f \in \F(X)^\times$, we write $\div(f)$ and $(f)_\infty$ for the divisor and pole divisor of $f$, respectively.
For $f \in \F(\mc X)$ and $P \in \mc X$, the valuation of $f$ at $P$ is denoted by $v_P(f)$.

For a divisor $A$ on $\mc X$, we write ${L}(A) = \left\{ f \in \F(\mc X)^\times : \operatorname{div}(f) \geq - A \right\} \cup \{ 0 \}$ for the Riemann-Roch space of the divisor $A$.
Given two divisors $A$ and $B$ on  $\mc X$, we write $A \sim B$ to mean that they are linearly equivalent, that is, that there is $f \in \F(\mc X)$ with $\operatorname{div}(f) = A-B$.

\section{Hermitian triangles}\label{sec:triangles}

For a prime power $q$, the Hermitian curve is the unique curve $\mc H = \mc H_q$ over $\F_{q^2}$, up to birational isomorphism, which has genus $g = q(q-1)/2$ and $q^3+1$ rational points \cite{RS}. These $q^3+1$ points admit a doubly transitive action of the automorphism group of $\mc H$, which is isomorphic to $\PGU(3,q^2)$ \cite{St}.
The curve $\mc H$ is isomorphic to the Fermat curve of degree $q+1$.
However, in this work, we will use the smooth plane model for $\mc H$ defined by the vanishing of
\begin{equation} \label{defining_eq}
  F(X,Y,Z) = X^{q+1} - Y^q Z - Y Z^q .
\end{equation}
This model has the benefit of admitting a natural choice of two rational points $P_\infty = (0:1:0)$ and $P_{00} = (0:0:1)$, which will prove useful for dealing with the automorphism group explicitly. We denote the $q^3$ affine points of $\mc H(\F_{q^2}) \setminus \{P_\infty\}$ by $P_{\alpha\beta} = (\alpha:\beta:1)$.

The rational functions $x = X/Z$ and $y = Y/Z$ satisfy $x^{q+1} = y^q + y$ and have divisors
\[
\operatorname{div}(x) = - qP_\infty + \sum_{\beta^q + \beta = 0} P_{0\beta}
\qquad\text{and}\qquad
\operatorname{div}(y) = m(P_{00} - P_\infty) ,
\]
where $m = q+1$.
Since the automorphism group acts doubly transitive on $\mc H(\F_{q^2})$, composing the function $y$ with an appropriate automorphism yields a function $y_{AB}$ with divisor
\[
\operatorname{div}(y_{AB}) = m(B-A) 
\]
for any $A,B \in \mc H(\F_{q^2})$. 
Thus, the $m$th multiples of any two $\F_{q^2}$-rational points of $\mc H$ are linearly equivalent.
If $A = P_\infty$ and $B = P_{\alpha\alpha}$, we denote the function $y_{AB}$ by $y_\alpha$, and note that
\begin{equation}\label{yalpha}
y_\alpha
= y - \alpha- \alpha^q (x-\alpha)
= y - \alpha^q(x-1) .
\end{equation}

We are interested in studying Weierstrass semigroups $H(P,Q,R)$ associated to triples $(P,Q,R)$ of distinct $\F_{q^2}$-rational points of the Hermitian curve. Note that the  $\F_{q^2}$-rational points of $\mathcal H$ are precisely its Weierstrass points \cite{Garcia_Viana}.
We will see in Corollary \ref{corollary:S3} that (perhaps unexpectedly) the semigroup $H(P,Q,R)$ of any three rational points of $\mc H$ is invariant under any permutation of the three points, so that $H(P,Q,R)$ depends only on the three-element set $T = \{P,Q,R\}$, which we call a Hermitian triangle.
In this paper, the term Hermitian triangle will only be used to refer sets of points defined over $\F_{q^2}$.
In the degenerate case that $P$, $Q$, and $R$ lie on a line in $\P^2$, we call $T$ a collinear triangle. 

For any Hermitian triangle $T = \{P,Q,R\}$ and automorphism $\sigma$ of $\mc H$, Corollary \ref{corollary:S3} and \eqref{auts} together imply that $T$ and $\sigma(T)$ define the same semigroup.
It is therefore natural to consider triangles up to the action of $\Aut(\mc H)$.
We say that two triangles $T$ and $T'$ are in the same automorphism class if there is $\sigma \in \Aut(\mc H)$ such that $\sigma(T) = T'$.

The automorphism group of the Hermitian curve is well understood; see, for instance, \cite{St}. For our purposes, we make use of 
the following explicit representation of $\Aut(\mc H)$ which was shared with the first author by John Little, along with the results in Lemmas \ref{3cycle} and \ref{orbits} \cite{Little}.

\begin{lemma}\label{auts}
Let $(\alpha:\beta:\gamma)$ and $(\lambda:\mu:\nu)$ be any two distinct points of $\mc H(\F_{q^2})$, and let $\epsilon \in \F_{q^2}^\times$.
Then there is an automorphism of $\mc H$ induced by the linear mapping on $\P^2$ defined by left multiplication by the matrix
\[
  M = \begin{bmatrix}
\epsilon(\gamma \mu - \beta \nu)^q & \epsilon^{q+1} \xi \lambda & \alpha \\
\epsilon(\alpha \mu - \beta \lambda)^q & \epsilon^{q+1} \xi \mu & \beta \\
\epsilon(\gamma\lambda - \alpha \nu)^q & \epsilon^{q+1} \xi \nu & \gamma 
\end{bmatrix} ,
\]
where $\xi = -\lambda^q \alpha + \mu^q \gamma + \nu^q \beta$.
Moreover, every element of $\Aut(\mc H)$ can be written in this form for some choice of two points and $\epsilon$.
\end{lemma}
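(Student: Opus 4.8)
The plan is to use the concrete realization of $\Aut(\mc H)$ as a matrix group and then check the two assertions by linear algebra. Writing $\bar v$ for the coordinatewise $q$th power of a vector $v$, the defining polynomial \eqref{defining_eq} is the Hermitian form $F(v) = v^\top A\,\bar v$ with
\[
A = \begin{bmatrix} 1 & 0 & 0 \\ 0 & 0 & -1 \\ 0 & -1 & 0 \end{bmatrix},
\]
so that $\mc H$ is the isotropic locus of the nondegenerate Hermitian form $H(v,w) = v^\top A\,\bar w$. Under the identification $\Aut(\mc H)\cong\PGU(3,q^2)$, the automorphisms of $\mc H$ are exactly the classes $[N]$ of matrices satisfying the similitude condition $N^\top A\,\bar N = c\,A$ for some $c\in\F_{q^2}^\times$. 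Thus the lemma reduces to two claims about this single condition: that the displayed matrix $M$ satisfies it, and that every solution is projectively one of the $M$.

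For the first assertion, note that the columns of $M$ are the images $Me_3 = (\alpha,\beta,\gamma)^\top$, $Me_2 = \epsilon^{q+1}\xi\,(\lambda,\mu,\nu)^\top$, and $Me_1 = \epsilon\,w$, where $w$ is the coordinatewise $q$th power of a signed permutation of the cross product of the two given points. The similitude condition is equivalent to the Gram matrix $\big(H(Me_i,Me_j)\big)_{i,j}$ being equal to $c\,A$, and its entries split into routine and essential parts. The vanishing of $H(Me_3,Me_3)$ and $H(Me_2,Me_2)$ is precisely the hypothesis that $(\alpha:\beta:\gamma)$ and $(\lambda:\mu:\nu)$ lie on $\mc H$; the value $H(Me_2,Me_3) = -c$ with $c = \epsilon^{q+1}\xi^{q+1}$ follows from a short computation identifying $\xi$ with a Frobenius twist of $H\big((\lambda,\mu,\nu),(\alpha,\beta,\gamma)\big)$; and the vanishing of $H(Me_1,Me_2)$ and $H(Me_1,Me_3)$ is a direct algebraic identity expressing that the cross product is $A$-orthogonal to both points. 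The remaining diagonal entry is the crux: one computes $H(Me_1,Me_1) = \epsilon^{q+1}F\big((\alpha,\beta,\gamma)\times(\lambda,\mu,\nu)\big)$, and the requirement reduces to checking that $F$ of the cross product equals $\xi^{q+1}$, which is where the two curve equations $F(\alpha,\beta,\gamma)=F(\lambda,\mu,\nu)=0$ are genuinely used. This identity is the main obstacle; I expect to verify it either by expanding and reducing modulo the two curve equations, or more conceptually via the Cauchy--Binet relation between $H(w,w)$, $\det A$, and the $2\times 2$ Gram determinant of the two points. Invertibility of $M$ is then automatic, since $c\neq 0$ (as $\epsilon\neq 0$ and $\xi\neq 0$ because the points are distinct) and $A$ is invertible.

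For the second assertion I would argue by cosets of the stabilizer $B$ of the pair $(P_{00},P_\infty)$. First I would identify $B$: if $[N]$ fixes $e_3$ and $e_2$ projectively, then its last two columns are multiples of $e_3$ and $e_2$, and imposing $H(Ne_1,e_2)=H(Ne_1,e_3)=0$ forces the first column to be a multiple of $e_1$; solving the similitude condition and scaling so that the $e_3$-eigenvalue is $1$ then shows $N = \operatorname{diag}(\epsilon,\epsilon^{q+1},1)$, and these are exactly the matrices $M(P_{00},P_\infty,\epsilon)$ given by the formula. Next, specializing the formula yields the factorization $M(P,Q,\epsilon) = M(P,Q,1)\cdot\operatorname{diag}(\epsilon,\epsilon^{q+1},1)$, so for fixed points $P=(\alpha:\beta:\gamma)$ and $Q=(\lambda:\mu:\nu)$ the classes $[M(P,Q,\epsilon)]$ sweep out precisely the coset $\tau_0 B$, where $\tau_0 = [M(P,Q,1)]$. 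Finally, given an arbitrary $\sigma\in\Aut(\mc H)$, I would set $P = \sigma(P_{00})$ and $Q = \sigma(P_\infty)$, which are distinct rational points, so the first assertion applies and $\tau_0$ sends $P_{00}\mapsto P$, $P_\infty\mapsto Q$; then $\tau_0^{-1}\sigma$ fixes both $P_{00}$ and $P_\infty$, hence lies in $B$, whence $\sigma\in\tau_0 B = \{[M(P,Q,\epsilon)]\}$. This exhibits $\sigma$ in the required form. Notably, this argument sidesteps any need to compute $|\PGU(3,q^2)|$ and in fact re-proves the double transitivity of the action.
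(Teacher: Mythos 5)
Your overall architecture is sound, and your coset argument for the second assertion is a genuinely different route from the paper's, which simply counts the $(q^3+1)q^3(q^2-1)$ triples (point, point, $\epsilon$), observes they give distinct automorphisms, and matches this against the order of $\Aut(\mc H)$; your route avoids the order count and, as you note, re-derives double transitivity. However, there is one genuine gap: your justification that $\xi \neq 0$ ``because the points are distinct.'' Distinctness alone implies nothing about $\xi$; what you actually need is that two distinct isotropic points of a nondegenerate Hermitian form in dimension $3$ are never orthogonal to each other (equivalently, the form has Witt index $1$; equivalently, $\mc H$ contains no line of $\P^2$). This genuinely requires the hypothesis that both points lie on $\mc H$ and that the dimension is $3$: for a nondegenerate Hermitian form in dimension $4$ there do exist distinct, mutually orthogonal isotropic points, so no purely formal argument can work. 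This is precisely where the paper invests most of its effort: it shows that if $\xi = -\Psi(v,w) = 0$, then sesquilinearity forces $F(av+bw)=0$ for all $a,b\in\F_{q^2}$, so $\mc H$ would contain all $q^2+1$ rational points of the line through the two given points, contradicting Bezout's theorem (a line meets the degree-$(q+1)$ curve $\mc H$ in at most $q+1$ points). Your proof needs $\xi \neq 0$ in several places---for $c \neq 0$ and hence invertibility, and for $[Me_2]=Q$ in the coset argument---so this step must actually be argued, not asserted.

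A second, softer issue is the one you flag yourself: the identity $H(Me_1,Me_1)=\epsilon^{q+1}\xi^{q+1}$, i.e., $F(A,B,C)=\xi^{q+1}$ with $A=\gamma\mu-\beta\nu$, $B=\alpha\mu-\beta\lambda$, $C=\gamma\lambda-\alpha\nu$, modulo the two curve equations. The identity is true, and either of your proposed strategies will close it (the Gram-determinant route is clean here because isotropy of both points collapses the $2\times 2$ determinant to $-H(u,v)H(v,u) = -H(u,v)^{q+1}$, which is $\pm\xi^{q+1}$ up to a Frobenius twist); note that the paper needs exactly the same identity, in the guise of $\det M = \epsilon^{q+2}\xi F(A,B,C)=\epsilon^{q+2}\xi^{q+2}$, and asserts it equally briefly, so I would not count this against you once carried out. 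Finally, be aware that your proof consumes a stronger external input than the paper's: you use that $\Aut(\mc H)$ consists exactly of the projective classes of similitudes of the Hermitian form (both to reduce the lemma to the similitude condition and to identify the stabilizer $B$ inside $\Aut(\mc H)$), whereas the paper's counting argument uses only the order of $\Aut(\mc H)$. Both facts are standard, but you should cite the stronger statement explicitly rather than the bare isomorphism $\Aut(\mc H)\cong\PGU(3,q^2)$.
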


\begin{proof}

Let $(x:y:z)$ be a point of $\P^2$ with image $(x':y':z')$ under $M$.
Then one may check directly that 
\[
  F(x',y',z') = \epsilon^{q+1} F(A,B,C) F(x,y,z),
\]
where $A = \gamma \mu - \beta \nu$, $B = \alpha \mu - \beta \lambda$ and $C = \gamma \lambda - \alpha \nu$, and $F$ is the defining polynomial of $\mc H$ given in \eqref{defining_eq}.
Thus, $M$ leaves $\mc H$ invariant.

We now show that the matrix $M$ is invertible, and hence defines an automorphism of $\mc H$.
Since
\[
  \det M 
= \epsilon^{q+2} \xi F(A,B,C) 
= \epsilon^{q+2} \xi^{q+2}
\]
and $\epsilon \neq 0$, we want to show that $\xi \neq 0$.
To do so, consider the sesquilinear form $\Psi \colon  \F_{q^2}^3 \times  \F_{q^2}^3 \to \F_{q^2}$ defined by
\[
  \Psi(v,w) = v_1^qw_1 - v_2^q w_3 - v_3^q w_2,
\]
and note that $F(v) = \Psi(v,v)$.
If $v,w \in  \F_{q^2}^3$ satisfy $\Psi(v,v) = 0$, $\Psi(w,w) = 0$, and $\Psi(v,w) = 0$, then for any $a,b \in \F_{q^2}$, we have
\begin{align*}
  F(av+bw)
&= \Psi(av,av) + \Psi(av,bw) + \Psi(bw,av) + \Psi(bw,bw) \\
&= a^{q+1} \Psi(v,v) + a^q b \Psi(v,w) + ab^q \Psi(v,w)^q + b^{q+1} \Psi(w,w) = 0 .
\end{align*}

In particular, this means that if the two linearly independent vectors $v = (\lambda,\mu,\nu)$ and $w = (\alpha,\beta,\gamma)$ satisfy
\[
  \xi = - \Psi(v,w) = 0,
\]
then $\mc H$ contains all $q^2+1$ of the $\F_{q^2}$-rational point on the line through $(\alpha:\beta:\gamma)$ and $(\lambda:\mu:\nu)$ in $\P^2$. However, this is not the case, as Bezout's Theorem assures that any line intersects $\mc H$ in at most $q+1$ points. 

To prove the final claim, we note that there are $(q^3+1) q^3 (q^2-1)$ choices for the two points $(\alpha:\beta:\gamma)$ and $(\lambda:\mu:\nu)$ and the unit $\epsilon$, each yielding a distinct automorphism of $\mc H$. Since this is equal to the order of $\Aut(\mc H)$, we have accounted for all elements of the group. \qedhere
\end{proof}

Note that the matrix $M$ maps the points $P_{00}$ and $P_\infty$ to $(\alpha:\beta:\gamma)$ and $(\lambda:\mu:\nu)$, respectively, witnessing the double-transitivity of $\Aut(\mc H)$ on $\mc H(\F_{q^2})$. Moreover, the stabilizer of the two points $P_{00}$ and $P_\infty$ in $\Aut(\mc H)$ consists of automorphisms $\phi_\epsilon(X:Y:Z) = (\epsilon X: \epsilon^{q+1}Y :Z)$ for $\epsilon \in  \F_{q^2}^\times$.

Since all automorphisms of $\mc H$ are induced by linear mappings on $\P^2$, the image of a collinear triangle under an automorphism is again collinear.

By the double transitivity of $\Aut(\mc H)$ on $\mc H(\F_{q^2})$, each automorphism class of Hermitian triangles contains a triangle of the form $T = \{ P_\infty, P_{00}, P_{\alpha\beta}\}$.
Such a triangle satisfies $\beta\neq 0$, as $P_{00}$ is the only point of $\mc H$ on the line $y=0$.
If $\alpha \neq 0$, then acting on $T$ by $\phi_\epsilon$ with $\epsilon = (\beta \alpha^{-1})^{-q}$ produces a triangle with $\alpha = \beta$.

\begin{definition*}
A Hermitian triangle $T$ is in \emph{standard form} if $T = \{ P_\infty, P_{00}, P_{\alpha\beta}\}$, and either $\alpha = 0$ or $\alpha = \beta$.
\end{definition*}

For $T$ in standard form, the cases $\alpha = 0$ and $\alpha = \beta$ are mutually exclusive. If $\alpha = 0$, then $T$ is collinear, and if $\alpha = \beta$, then $T$ is noncollinear.

While the following fact is well-known, we include it and a short proof, since it is important for this study. 

\begin{lemma}\label{3cycle}
Given any Hermitian triangle $T = \{P,Q,R\}$, there is an automorphism of $\mc H$ which acts as a 3-cycle on $T$.
\end{lemma}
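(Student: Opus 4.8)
The plan is to exploit Lemma \ref{auts}, which provides an explicit automorphism sending the standard pair $(P_{00}, P_\infty)$ to any ordered pair of distinct rational points. The key observation is that a $3$-cycle on $T = \{P,Q,R\}$ is an automorphism $\sigma$ with $\sigma(P) = Q$, $\sigma(Q) = R$, and $\sigma(R) = P$, so I would try to produce such a $\sigma$ by composing two maps of the form guaranteed by Lemma \ref{auts}. Concretely, writing $\sigma_{X,Y}$ for an automorphism carrying $(P_{00}, P_\infty) \mapsto (X, Y)$, I would first build a candidate by composing a map sending $(P,Q)$ to $(Q,R)$ with a suitable normalization, but the cleanest route is to reduce to standard form first: by the double transitivity of $\Aut(\mc H)$ it suffices to prove the claim for a single triangle in each automorphism class, and in fact it suffices to prove it for one triangle, since conjugating a $3$-cycle by $\tau \in \Aut(\mc H)$ gives a $3$-cycle on $\tau(T)$.

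So the first step is to reduce to a triangle $T = \{P_\infty, P_{00}, P_{\alpha\beta}\}$ in standard form. Then I would seek an explicit $\sigma \in \Aut(\mc H)$ realizing the cycle $P_\infty \mapsto P_{00} \mapsto P_{\alpha\beta} \mapsto P_\infty$ (or its inverse). The second step is to construct $\sigma$ using Lemma \ref{auts}: apply that lemma with the target pair being the images of $(P_{00}, P_\infty)$ under the desired cycle, namely $(\sigma(P_{00}), \sigma(P_\infty)) = (P_{\alpha\beta}, P_{00})$, which fixes the matrix $M$ up to the free parameter $\epsilon \in \F_{q^2}^\times$. This automorphism already sends $P_\infty \mapsto P_{00}$ and $P_{00} \mapsto P_{\alpha\beta}$ correctly; the third step is to check that for an appropriate choice of $\epsilon$ it also sends $P_{\alpha\beta} \mapsto P_\infty$. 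Since an automorphism permuting the three points of $T$ and fixing neither of the two already-matched points must complete the $3$-cycle (the only possibilities for a permutation of a $3$-element set sending $P_\infty \to P_{00} \to P_{\alpha\beta}$ are the full $3$-cycle or a map that is not a bijection of $T$), verifying $\sigma(P_{\alpha\beta}) \in T$ suffices, and by elimination it must equal $P_\infty$.

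The main obstacle is the third step: confirming that the parameter $\epsilon$ can be chosen so that $\sigma$ actually stabilizes the set $T$ as a whole, i.e. that $\sigma(P_{\alpha\beta})$ lands back on $P_\infty$ rather than on some fourth point of $\mc H$. This is a direct but slightly delicate computation with the matrix $M$ from Lemma \ref{auts}, using the Hermitian relation $\beta^q + \beta = \alpha^{q+1}$ satisfied by the coordinates of $P_{\alpha\beta}$, together with the standard-form normalization ($\alpha = 0$ or $\alpha = \beta$) to simplify. I expect the collinear case $\alpha = 0$ and the noncollinear case $\alpha = \beta$ to require separate, short verifications, but in each the existence of a valid $\epsilon$ should reduce to solving a single equation in $\F_{q^2}^\times$ that is guaranteed a solution by the structure of the form $\Psi$ appearing in the proof of Lemma \ref{auts}. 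Transporting the resulting $3$-cycle back to the original $T$ by the reduction automorphism completes the argument.
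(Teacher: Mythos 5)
Your proposal is correct and follows essentially the same route as the paper: the paper reduces to the triple $(P_\infty, P_{00}, P_{\alpha\beta})$ with $\beta \neq 0$ and applies Lemma \ref{auts} with the points $P_\infty$ and $P_{\alpha\beta}$, choosing $\epsilon = \beta^{-q}$, which is exactly the value your third step would solve for (one coordinate equation forces $\epsilon \beta^q = 1$, and the remaining coordinate then vanishes automatically from the relation $\beta^q + \beta = \alpha^{q+1}$, with no case split between $\alpha = 0$ and $\alpha = \beta$ actually needed). The only difference is that the paper exhibits this $\epsilon$ explicitly rather than arguing for its existence.
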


\begin{proof}
We may assume that $(P,Q,R) = (P_\infty, P_{00}, P_{\alpha\beta})$ with $\beta \neq 0$.
Then applying Lemma \ref{auts} with the points $P_\infty$ and $P_{\alpha\beta}$ and with $\epsilon = \beta^{-q}$ yields an automorphism $\sigma$ of $\mc H$ satisfying $\sigma(P_{00}) = P_\infty$, $\sigma(P_\infty) = P_{\alpha\beta}$, and $\sigma(P_{\alpha\beta}) = P_{00}$. \qedhere
\end{proof}

\begin{lemma}\label{orbits}
There are $1 + \lceil q/2 \rceil$ automorphism classes of $\F_{q^2}$-rational triangles on the Hermitian curve.
These classes are in correspondence with Galois-conjugacy classes of roots $\alpha$ of the polynomial $t^{q+1}- t^{q} - t$ over $\F_q$. In particular, the class corresponding to the root $\alpha = 0$ consists of all collinear Hermitian triangles, and the class corresponding to the conjugates of a nonzero root $\alpha$ is represented by the noncollinear triangle $\{P_\infty, P_{00}, P_{\alpha\alpha} \}$.

\end{lemma}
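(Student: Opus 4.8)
The plan is to use the reduction to standard form above to list one representative per class and then to decide exactly when two representatives are equivalent. Writing $f(t) = t^{q+1} - t^q - t$, a point $P_{\gamma\gamma}$ lies on $\mc H$ exactly when $f(\gamma) = 0$, so the standard forms are the collinear triangles $\{P_\infty, P_{00}, P_{0\beta}\}$ with $\beta^q + \beta = 0$ and $\beta \neq 0$, together with the noncollinear triangles $\{P_\infty, P_{00}, P_{\alpha\alpha}\}$ with $\alpha$ a nonzero root of $f$. I first dispose of the collinear case. Writing $N = N_{\F_{q^2}/\F_q}$ for the norm, the stabilizer element $\phi_\epsilon$ sends $P_{0\beta}$ to $P_{0,N(\epsilon)\beta}$; since $N$ is onto $\F_q^\times$ and the admissible third points are the nonzero elements $\F_q^\times\beta_0$ of the one-dimensional $\F_q$-space $\{\beta : \beta^q + \beta = 0\}$, the maps $\phi_\epsilon$ permute them transitively. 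Hence all collinear triangles form a single class, corresponding to the root $\alpha = 0$; as automorphisms preserve collinearity, it meets no noncollinear class.

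For the noncollinear triangles I would first show that conjugate roots give equivalent triangles. Applying Lemma \ref{auts} to the points $P_\infty$ and $P_{00}$ yields the automorphism $(X:Y:Z) \mapsto (\epsilon(-1)^qX : Z : \epsilon^{q+1}Y)$ interchanging $P_\infty$ and $P_{00}$; taking $\epsilon = (-1)^q\alpha^{-1}$ sends $P_{\alpha\alpha}$ to $P_{\alpha^q\alpha^q}$, so $\{P_\infty, P_{00}, P_{\alpha\alpha}\}$ and $\{P_\infty, P_{00}, P_{\alpha^q\alpha^q}\}$ are equivalent and the class depends only on the Galois orbit $\{\alpha, \alpha^q\}$.

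The converse is where the real work lies. I would attach to each triangle the quantity $\Pi(T) = \Psi(u_1, u_2)\Psi(u_2, u_3)\Psi(u_3, u_1)$, where $u_1, u_2, u_3$ are isotropic representatives of the three points and $\Psi$ is the Hermitian form from the proof of Lemma \ref{auts}. Rescaling the $u_i$ multiplies $\Pi(T)$ by a product of norms, hence by an element of $\F_q^\times$; every automorphism $M$ satisfies $\Psi(Mv, Mw) = \kappa\,\Psi(v, w)$ for a fixed $\kappa \in \F_q^\times$, this being the polarization of the scaling identity $F(Mx) = \kappa F(x)$ from Lemma \ref{auts}, so it multiplies $\Pi(T)$ by $\kappa^3 \in \F_q^\times$. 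A $3$-cycle of the points fixes $\Pi(T)$ while a transposition replaces it by $\Pi(T)^q$, so $\Pi(T)$ descends to a well-defined invariant in $(\F_{q^2}^\times/\F_q^\times)/\langle t \mapsto t^q\rangle$. Computing on the standard triangle gives $\Pi(\{P_\infty, P_{00}, P_{\alpha\alpha}\}) = -\alpha$, so the invariant records exactly the class of $\alpha$ modulo $\F_q^\times$ and Frobenius. Finally, a one-line computation shows that $c\alpha$ is a root of $f$ with $c \in \F_q^\times$ only if $c = 1$, so each $\F_q^\times$-coset contains at most one nonzero root; therefore distinct Galois orbits of nonzero roots yield distinct invariants and hence inequivalent triangles.

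It then remains to count Galois orbits of roots of $f$. Since $f'(t) = t^q - 1 = (t-1)^q$ and $f(1) = -1$, the polynomial $f$ is separable with $q+1$ roots, and the relation $\alpha^q = \alpha/(\alpha-1)$ forces $\alpha^{q^2} = \alpha$, so all roots lie in $\F_{q^2}$. Frobenius fixes a nonzero root only when $\alpha \in \F_q$, which forces $\alpha = 2$; this happens for $q$ odd and, since $2 = 0$ in characteristic $2$, not for $q$ even. In either parity the $q$ nonzero roots form $\lceil q/2 \rceil$ orbits, so the total number of classes is $1 + \lceil q/2\rceil$, matching the stated correspondence. The main obstacle is the converse direction for noncollinear triangles: ruling out unexpected identifications requires the invariant $\Pi(T)$, and the substance of the argument is verifying that $\Pi(T)$ is well-defined and automorphism-invariant and evaluating it on the standard form, everything else being the established standard-form reduction, an explicit matrix computation, or elementary counting.
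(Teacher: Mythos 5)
Your proof is correct, and it reaches the same classification, but the decisive step --- showing that non-conjugate nonzero roots give inequivalent triangles --- is handled by a genuinely different mechanism than the paper's. After the same standard-form reduction and essentially the same norm-surjectivity argument for the collinear class, the paper uses Lemma \ref{3cycle} to reduce to an automorphism fixing $P_\infty$, then plugs the two possible images of $P_{00}$ into the explicit matrices of Lemma \ref{auts} and solves the resulting equations, obtaining $\alpha' \in \{\alpha, \alpha^q\}$ directly. You instead introduce the invariant $\Pi(T) = \Psi(u_1,u_2)\Psi(u_2,u_3)\Psi(u_3,u_1)$, show it is well defined in $(\F_{q^2}^\times/\F_q^\times)/\langle t \mapsto t^q\rangle$ and automorphism-invariant, evaluate it to $-\alpha$ on standard triangles, and finish with the elementary observation that each $\F_q^\times$-coset contains at most one nonzero root of $t^{q+1}-t^q-t$. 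This buys a case-analysis-free argument, a reusable invariant of unitary triangles, and (via your separability and fixed-root computation) an explicit derivation of the count $1+\lceil q/2 \rceil$, which the paper asserts rather than proves; the paper's route, by contrast, needs nothing beyond the matrix form of Lemma \ref{auts} and a short computation. Two steps you compress deserve a line each, though both are genuinely available from the paper: (i) the identity $\Psi(Mv,Mw)=\kappa\Psi(v,w)$ does not follow from $F(Mx)=\kappa F(x)$ by the substitution $x=v+w$ alone, since for a sesquilinear form that substitution only controls the trace $\Psi(Mv,Mw)+\Psi(Mv,Mw)^q$; one must let $x=v+\lambda w$ vary over $\lambda\in\F_{q^2}$ and invoke nondegeneracy of the trace pairing, using also that $\kappa=(\epsilon\xi)^{q+1}\in\F_q^\times$ as in the proof of Lemma \ref{auts}; and (ii) the nonvanishing $\Pi(T)\neq 0$, needed for your invariant to land in $\F_{q^2}^\times$, requires the Bezout argument from that same proof (if $\Psi(u_i,u_j)=0$ for distinct points of $\mc H$, a full line would lie on $\mc H$). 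Your explicit swap automorphism sending $P_{\alpha\alpha}$ to $P_{\alpha^q\alpha^q}$ is also correct, and makes the ``conjugates are equivalent'' direction more transparent than in the paper, where it is absorbed into the solvability of the displayed equations.
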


\begin{proof}

Since each automorphism class is represented by a triangle in standard form $T = \{ P_\infty, P_{00}, P_{\alpha\beta}\}$, it suffices to determine when two such triangles are in the same class.

If $\alpha = 0$, then $\beta^q + \beta = 0$ and $\beta \neq 0$, so that $\beta^{q-1}= -1$.
We claim that all such triangles are in the same automorphism class.
As $\epsilon$ runs over $\F_{q^2}^\times$, the powers of $\epsilon^{q+1}$ run over $\F_q^\times$, and $\beta \epsilon^{q+1}$ runs over all $(q-1)$st roots of $-1$.
Therefore, each of the $q-1$ points of $\mc H$ of the form $P_{0\beta'}$ with $\beta'\neq 0$ is realized as $\phi_\epsilon(P_{0\beta})$ for some $\epsilon$.

Triangles with $\alpha = \beta \neq 0$ are noncollinear; hence, they do not share an automorphism class with those with $\alpha = 0$.
We claim that triangles $T = \{ P_\infty, P_{00}, P_{\alpha\alpha}\}$ and $T' = \{ P_\infty, P_{00}, P_{\alpha'\alpha'}\}$ are in the same classs if and only if $\alpha' \in \{ \alpha, \alpha^q\}$.

Suppose that $\sigma \in \Aut(\mc H)$ satisfies $\sigma(T') = T$.
By Lemma \ref{3cycle}, we may assume that $\sigma$ fixes $P_\infty$.
Then $\sigma$ sends $P_{00}$ either to $P_{00}$ or $P_{\alpha\alpha}$.
If $\sigma(P_{00}) = P_{00}$, then $\sigma = \phi_\epsilon$ for some $\epsilon \in \F_{q^2}^\times$.
But then $\alpha = \epsilon \alpha' = \epsilon^{q+1} \alpha'$, so that $\epsilon^q = 1$.
Since $\epsilon$ has order coprime to $q$, this means that $\epsilon = 1$, so that $\alpha' = \alpha$.
On the other hand, if $\sigma(P_{00}) = P_{\alpha\alpha}$, then Lemma \ref{auts} implies that $\sigma$ is of the form
\[
\sigma(X:Y:Z) = (\epsilon X + \alpha Z : \epsilon \alpha^q X + \epsilon^{q+1} Y + \alpha Z : Z)
\]
for some $\epsilon \in \F_{q^2}^\times$.
Then from $\sigma(P_{\alpha'\alpha'}) = P_{00}$, we have
\[
\epsilon\alpha' + \alpha = 0
\qquad\text{and}\qquad
\epsilon\alpha^q \alpha' + \epsilon^{q+1}\alpha' + \alpha = 0 .
\]
Solving these equations over $\F_{q^2}$ yields $\alpha' = \alpha^q$, as desired. \qedhere

\end{proof}

Although Hermitian triangles in the same automorphism class have the same Weierstrass semigroup, there are generally strictly fewer distinct three-point semigroups than automorphism classes of Hermitian triangles \cite{JM}. 
We now define an invariant which will allow us to distinguish triangles with different semigroups.

By Lemma \ref{orbits}, the automorphism class of any Hermitian triangle $T$ is represented by a triangle in standard form $\{ P_\infty, P_{00}, P_{\alpha\beta}\}$, with $\alpha$ a root of the polynomial
\[
t^{q+1} - t^q - t = (t-1)^{q+1} - (-1)^{q+1} .
\]
Such $\alpha$ are precisely those of the form $\alpha = 1 - \zeta$ with $\zeta^{q+1} = 1$. For any such $\alpha$, the multiplicative order of $\zeta = 1-\alpha$ is a divisor $d$ of $q+1$.

\begin{definition*}
Let $T$ be a Hermitian triangle.
Then $T$ is of \emph{type $d$} if there is $\sigma \in \Aut(\mc H)$ such that $\sigma(T) = \{ P_\infty, P_{00}, P_{\alpha\beta}\}$ is in standard form, and $\ord(1-\alpha) = d$.
\end{definition*}

By Lemma \ref{orbits},  two triangles $\{ P_\infty, P_{00}, P_{\alpha\beta}\}$ and $\{ P_\infty, P_{00}, P_{\alpha'\beta'}\}$ in standard form are in the same automorphism class only if $\alpha$ and $\alpha'$ are Galois conjugates, in which case $\ord(1-\alpha) = \ord(1-\alpha')$. Thus, the type of a triangle is well-defined.

Note that triangles of type $d=1$ are precisely those triangles which are collinear.
We will use the notation $T_d$ throughout to denote triangles of type $d$.
Since there is a triangle of type $d$ for every divisor $d$ of $q+1$, to prove Theorem \ref{main_theorem} for $q>3$ it will suffice to show that the type $d$ of a Hermitian triangle characterizes its Weierstrass semigroup.

\section{Discrepancies and basepoints}\label{section:discrepancies}

The main tool we will use to study three-point semigroups is the notion of two-point discrepancies introduced by Duursma and Park \cite[\S 2--3]{Duursma_Park}. Here we recall the definition of discrepancies and their relevant properties.
All results in this section apply to any absolutely irreducible, smooth, projective algebraic curve $\mc X$.

Fix two distinct points $P$ and $Q$ on $\mc X$, and let $K$ be a canonical divisor of $\mc X$.
\begin{definition*}
Let $\Delta(P,Q)$ be the set of all divisors $D$ on $\mc X$ such that $L(D) \neq L(D-P)$ and $L(D-Q) = L(D-P-Q)$.
A divisor in $\Delta(P,Q)$ is called a \emph{discrepancy} for $P$ and $Q$.
\end{definition*}

This definition is symmetric in $P$ and $Q$, as can be seen by examining the following diagram.
\[
\begin{tikzcd}
& L(D) \arrow[dl, dash, "\neq"'] \arrow[dr, dash] & \\
L(D-P) \arrow[dr, dash] & & L(D-Q) \arrow[dl, dash, "="', above] \\
& L(D-P-Q) &
\end{tikzcd}
\]

As it is often easier to demonstrate an inequality of two spaces than an equality, the following criterion for identifying discrepancies will prove useful.
\begin{lemma}\label{fg_lemma}
A divisor $D$ is in $\Delta(P,Q)$ if and only if
\[
 L(D-P) \neq L(D)
 \quad\text{and}\quad
 L(K-D+Q) \neq L(K-D+P+Q) .
\]
\end{lemma}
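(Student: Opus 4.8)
The plan is to translate the two equalities and inequalities in the definition of $\Delta(P,Q)$ into statements about Riemann--Roch spaces using Serre duality, and then show the resulting conditions match the claimed criterion. The key computational tool is the exact-sequence/dimension count that for any divisor $A$ and point $P$, one has $\dim L(A) - \dim L(A-P) \in \{0,1\}$, with the value $1$ precisely when some function has a pole of the exact order prescribed at $P$. Equivalently, via Riemann--Roch, $\dim L(A) - \dim L(A-P) = 1 - \bigl(\dim L(K-A+P) - \dim L(K-A)\bigr)$, so that $L(A) \neq L(A-P)$ is equivalent to $L(K-A+P) = L(K-A)$.

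First I would record this duality fact cleanly: for any divisor $A$,
\[
L(A-P) \neq L(A) \iff L(K-A) = L(K-A+P).
\]
This is the engine of the whole argument. The definition of $\Delta(P,Q)$ requires $L(D) \neq L(D-P)$, which is already the first stated condition, so that half needs no work. The second defining condition, $L(D-Q) = L(D-P-Q)$, is what must be converted. Applying the duality fact with $A = D-Q$ and the point $P$, I get
\[
L(D-Q) \neq L(D-Q-P) \iff L(K-D+Q) = L(K-D+Q+P),
\]
so negating both sides gives
\[
L(D-Q) = L(D-P-Q) \iff L(K-D+Q) \neq L(K-D+P+Q),
\]
which is exactly the second condition in the lemma. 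Combining this equivalence with the untouched first condition yields the stated biconditional, completing the proof.

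The only real subtlety, and the step I would be most careful with, is getting the duality fact exactly right — in particular making sure the point being added in the dual space is $P$ (not $Q$) and that the sign conventions on $K-A$ versus $A$ line up, since an off-by-one error in which divisor gets the $+P$ would silently break the symmetry the lemma is meant to capture. I would verify it by writing $\ell(A) - \ell(A-P) = \deg(A) + 1 - g + \ell(K-A) - \bigl(\deg(A-P) + 1 - g + \ell(K-A+P)\bigr)$ and simplifying, since $\deg(A) - \deg(A-P) = 1$, to obtain $\ell(A) - \ell(A-P) = 1 - \bigl(\ell(K-A+P) - \ell(K-A)\bigr)$. Because each of the two differences on either side lies in $\{0,1\}$, this forces the clean equivalence above, and no genuine obstacle remains beyond this bookkeeping.
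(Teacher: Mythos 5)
Your proof is correct and takes essentially the same route as the paper: the paper's proof simply asserts that Riemann--Roch makes $L(D-Q) = L(D-P-Q)$ equivalent to $L(K-D+Q) \neq L(K-D+P+Q)$, and your argument is exactly that equivalence, with the Riemann--Roch bookkeeping (applied to $A = D-Q$, using that both dimension differences lie in $\{0,1\}$) written out in full.
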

\begin{proof}
It follows directly from the Riemann-Roch theorem that the condition $L(D-Q) = L(D-P-Q)$ is equivalent to $L(K-D+Q) \neq L(K-D+P+Q)$.
\end{proof}

The following dimension formula is the main reason that we care about discrepancies, as it will allow us to compare the dimensions of various function spaces in order to identify Weierstrass gaps.

\begin{lemma}\cite[Theorem 3.4]{Duursma_Park}\label{dim_lemma}
For a given divisor $B$, 
\begin{equation}\label{dimeq_1}
\dim L(B + aP + bQ) 
= \#\{ B + iP + jQ \in \Delta(P,Q)  : i \leq a, j \leq b\} .
\end{equation}
\end{lemma}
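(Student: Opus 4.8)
The statement to prove is Lemma~\ref{dim_lemma}, the Duursma--Park dimension formula:
\[
\dim L(B + aP + bQ) = \#\{ B + iP + jQ \in \Delta(P,Q) : i \leq a,\ j \leq b\}.
\]

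\textbf{Overall approach.} The plan is to prove this by a double induction on the two parameters $a$ and $b$, using the fact that adding a single point to a divisor either leaves the Riemann--Roch space unchanged or increases its dimension by exactly one. The heart of the argument is to show that the contribution of each new point to the dimension is governed precisely by membership in the discrepancy set $\Delta(P,Q)$, so that the count on the right-hand side accumulates in lockstep with the dimension on the left.

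\textbf{Key steps.} First I would recall the basic one-step fact: for any divisor $D$, either $L(D) = L(D - P)$ or $\dim L(D) = \dim L(D-P) + 1$, and similarly for $Q$; this is immediate from Riemann--Roch. Next I would establish a base case by taking $a$ and $b$ sufficiently negative (below the relevant range), where both sides vanish: for $a, b \ll 0$ the space $L(B + aP + bQ)$ is trivial, and correspondingly no divisor $B + iP + jQ$ with $i \le a$, $j \le b$ can be a discrepancy, since a discrepancy $D$ requires $L(D) \neq L(D-P)$, which forces $L(D) \neq 0$. The inductive step is then to increment $a$ by one (with $b$ fixed) and compare
\[
\dim L(B + aP + bQ) - \dim L(B + (a-1)P + bQ),
\]
showing this difference equals the number of new lattice points $B + aP + jQ$ with $j \le b$ that lie in $\Delta(P,Q)$. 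The crucial observation is that $\dim L(B + aP + bQ) - \dim L(B + (a-1)P + bQ)$ telescopes along the $Q$-direction: writing $D_j = B + aP + jQ$, the difference is a sum over $j \le b$ of the one-step jumps in the $P$-direction that survive after the $Q$-filtration, and a divisor $D_j$ contributes exactly when $L(D_j) \neq L(D_j - P)$ while $L(D_j - Q) = L(D_j - P - Q)$ --- which is exactly the definition of $D_j \in \Delta(P,Q)$.

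\textbf{Main obstacle.} The delicate part will be bookkeeping the double filtration cleanly: I must verify that the jump in the $P$-direction at level $j$ is ``new'' relative to level $j-1$ precisely under the discrepancy condition, rather than being double-counted or missed. Concretely, the commutative square of inclusions
\[
\begin{tikzcd}
L(D_j) & L(D_j - P) \\
L(D_j - Q) & L(D_j - P - Q)
\end{tikzcd}
\]
has the property that each dimension drops by at most one along each edge, and I would argue that the ``defect'' of this square --- whether the top jump exceeds the bottom jump --- is a $0/1$ quantity that equals $1$ exactly when $D_j \in \Delta(P,Q)$. Summing these defects over $j \le b$ and recognizing the result as a telescoping difference of the two-parameter dimensions is where the real care is needed; once that identity is in hand, the induction closes and the formula follows. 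I expect no genuinely hard inequality, only the combinatorial discipline of tracking which of the four spaces in each square coincide.
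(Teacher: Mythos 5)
The paper itself offers no proof of this lemma: it is imported verbatim from Duursma and Park \cite[Theorem 3.4]{Duursma_Park}, so there is no internal argument to compare yours against. Judged on its own merits, your proposal is correct in structure and is essentially the standard proof of this fact. The double filtration, the base case in which both sides vanish (a discrepancy $D$ has $L(D)\neq L(D-P)$, hence $L(D)\neq 0$ and $\deg D\geq 0$, which also shows the right-hand count is finite), and the telescoping of $\dim L(B+aP+bQ)-\dim L(B+(a-1)P+bQ)$ into square defects along the $Q$-direction are all sound.

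The one place where you assert rather than prove is the claim that each square defect is a $0/1$ quantity, i.e.\ that the bottom jump can never exceed the top jump; a priori the defect lies in $\{-1,0,1\}$, and ruling out $-1$ is the crux of the whole argument. It does close exactly by the bookkeeping you anticipate: since membership in a Riemann--Roch space is a pointwise condition on valuations, one has $L(D_j-P)\cap L(D_j-Q)=L(D_j-P-Q)$. Hence if the bottom jump is $1$, any witness $f\in L(D_j-Q)\setminus L(D_j-P-Q)$ cannot lie in $L(D_j-P)$; yet $f\in L(D_j)$, so the top jump is $1$ as well. (Equivalently: such an $f$ has a pole at $P$ of exact order $v_P(D_j)$, and this property survives relaxing the condition at $Q$.) With this in place the defect equals $1$ precisely when $D_j\in\Delta(P,Q)$, your telescoping identity holds, and the induction on $a$ goes through. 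So: correct approach, with one elementary but essential lemma left to be written out rather than merely announced.
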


Let $\Delta_B(P,Q) = \Delta(P,Q) \cap \{ B + iP + jQ \colon i,j \in \Z\}$ denote the set of discrepancies counted in \eqref{dimeq_1}.

\begin{lemma}\cite[Theorem 3.5]{Duursma_Park} \label{sigmaB}
For a given divisor $B$, there is a bijective function $\sigma_B\colon \Z \to \Z$ such that 
\[
  \Delta_B(P,Q) = \{  B + iP + \sigma_B(i) Q \colon i \in \Z \} .
\]
For $m$ such that $mP \sim mQ$, the function $i + \sigma_B(i)$ depends only on $i$ modulo $m$.
Moreover, $\sigma_B$ is determined by its image on a full set of representatives modulo $m$, and for $m$ minimal such that $mP \sim mQ$, the set $\Delta_B(P,Q)$ consists of $m$ distinct divisor classes.
\end{lemma}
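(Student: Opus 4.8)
The plan is to analyze the discrepancy set one ``column'' at a time, by studying how the dimension $d(i,j) := \dim L(B+iP+jQ)$ changes as copies of $P$ and $Q$ are added. Writing $D = B+iP+jQ$, the two defining conditions for $D \in \Delta(P,Q)$ translate into statements about the first difference $\partial_P d(i,j) := d(i,j) - d(i-1,j) \in \{0,1\}$: indeed $D \in \Delta(P,Q)$ exactly when $\partial_P d(i,j) = 1$ while $\partial_P d(i,j-1) = 0$. I would first fix $i$ and show that, as $j$ ranges over $\Z$, the quantity $\partial_P d(i,j)$ is nondecreasing, equal to $0$ for $j \ll 0$ (where both spaces vanish) and to $1$ for $j \gg 0$ (by Riemann--Roch, since $\deg P = 1$). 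The unique index at which $\partial_P d(i,j)$ jumps from $0$ to $1$ is then the unique $j$ with $B+iP+jQ \in \Delta(P,Q)$, and I would define $\sigma_B(i)$ to be this value.

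The key step, and the main obstacle, is the monotonicity of $\partial_P d(i,j)$ in $j$, equivalently the submodularity inequality
\[
d(i,j) \ \geq\ d(i-1,j) + d(i,j-1) - d(i-1,j-1).
\]
I would prove this by observing that, as subspaces of $L(D)$, one has $L(D-P) \cap L(D-Q) = L(D-P-Q)$, since the pole conditions defining the two spaces combine exactly to the pole condition for $D-P-Q$. The dimension formula for the sum of two subspaces, together with $L(D-P)+L(D-Q) \subseteq L(D)$, then yields the inequality, which is precisely $\partial_P d(i,j) \geq \partial_P d(i,j-1)$. This establishes that $\sigma_B$ is a well-defined function $\Z \to \Z$. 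To see that it is a bijection, I would invoke the symmetry of the definition of $\Delta(P,Q)$ in $P$ and $Q$ recorded by the diagram following the definition: running the identical argument with the roles of $P$ and $Q$ exchanged shows that for each fixed $j$ there is a unique $i$ with $B+iP+jQ \in \Delta(P,Q)$. Unique-$j$-per-$i$ together with unique-$i$-per-$j$ forces $\sigma_B$ to be both injective and surjective.

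For the periodicity statements, suppose $mP \sim mQ$ and fix $h \in \F(\mc X)$ with $\div(h) = mQ - mP$. Multiplication by $h$ is an isomorphism $L(D) \xrightarrow{\sim} L(D + mP - mQ)$ compatible with the inclusions obtained by subtracting $P$ or $Q$; hence $D \in \Delta(P,Q)$ if and only if $D + mP - mQ \in \Delta(P,Q)$. Applied to a discrepancy $B + iP + \sigma_B(i)Q$, this gives $\sigma_B(i+m) = \sigma_B(i) - m$, so that $i + \sigma_B(i)$ depends only on $i$ modulo $m$ and $\sigma_B$ is determined by its values on any set of representatives modulo $m$.

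Finally, for minimal such $m$, I would count divisor classes. The relation $\sigma_B(i+m) = \sigma_B(i) - m$ makes the classes of $B+iP+\sigma_B(i)Q$ and $B+(i+m)P+\sigma_B(i+m)Q$ coincide, since they differ by $m(P-Q) \sim 0$; hence there are at most $m$ classes. Conversely, if $B + iP + \sigma_B(i)Q \sim B + i'P + \sigma_B(i')Q$ with $0 \le i, i' < m$, comparing degrees gives $\sigma_B(i) - \sigma_B(i') = -(i - i')$, whence $(i - i')(P - Q) \sim 0$; minimality of $m$ forces $m \mid (i-i')$, so $i = i'$. Thus $\Delta_B(P,Q)$ comprises exactly $m$ distinct divisor classes.
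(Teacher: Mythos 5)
Your proposal is correct, but note that the paper itself offers no proof of this lemma: it is quoted verbatim from Duursma and Park \cite[Theorem 3.5]{Duursma_Park}, so there is no internal argument to compare against. Your write-up is a sound, self-contained substitute. The four pillars all check out: (i) the submodularity inequality via $L(D-P)\cap L(D-Q)=L(D-P-Q)$ and $L(D-P)+L(D-Q)\subseteq L(D)$ is valid (the intersection identity uses $P\neq Q$, which holds here), and it gives exactly the monotonicity of $\partial_P d(i,\cdot)$ needed to locate a unique jump, hence a well-defined $\sigma_B$; (ii) bijectivity correctly leans on the symmetry of the discrepancy condition in $P$ and $Q$, which the paper records via its diagram and which follows from a short dimension count, so the ``unique $i$ per $j$'' half of your argument is legitimate; (iii) multiplication by $h$ with $\div(h)=mQ-mP$ gives $\sigma_B(i+m)=\sigma_B(i)-m$, which is precisely the statement that $i+\sigma_B(i)$ is $m$-periodic and that values on representatives determine $\sigma_B$; and (iv) the class count for minimal $m$ is right, including the implicit use of the fact that $\{n\in\Z : n(P-Q)\sim 0\}$ is a subgroup of $\Z$, so minimality forces $m\mid(i-i')$. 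One small point worth making explicit if you polish this: in (i) you should state that $\partial_P d(i,j)\in\{0,1\}$ (degree-one point) and that the ``$j\gg 0$'' value is $1$ because both divisors eventually have degree at least $2g-1$, so Riemann--Roch applies to both terms of the difference; you gesture at this but it is the hinge on which existence of the jump turns.
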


We emphasize that the function $\sigma_B$ depends not only on $B$, but on the points $P$ and $Q$ which respect to which discrepancies are being considered. In situations where the two points are not clear from context, we write $\sigma_B = \sigma_{B,P,Q}$.

Let us take a moment to reformulate the dimension formula from Lemma \ref{dim_lemma} in terms of the function $\sigma_B$.
For $0 \leq i \leq m-1$, the divisors in the divisor class of $B + iP + \sigma_B(i)Q$ which contribute to the count in \eqref{dimeq_1} correspond to integer solutions $k$ of the system of linear inequalities
\begin{align*}
i - km &\leq a \\
\sigma_B(i) + km &\leq b.
\end{align*}
Counting these solutions and summing over each divisor class yields the closed-form formula
\begin{equation}\label{dimeq_2}
  \dim L(B + aP + bQ) 
= \sum_{\substack{i \bmod m \\ i + \sigma_B(i) \leq a+b}} \left( \left\lfloor \frac{a-i}{m} \right\rfloor + \left\lfloor \frac{b-\sigma_B(i)}{m} \right\rfloor + 1 \right) .
\end{equation}

\begin{proposition}\label{basepoint}
A divisor $B + aP + bQ$ has a basepoint at $P$ if and only if $\sigma_B(a) > b$.
\end{proposition}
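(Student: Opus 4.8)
The plan is to translate the statement ``$B + aP + bQ$ has a basepoint at $P$'' into the language of Riemann-Roch spaces and then apply the dimension formula from Lemma~\ref{dim_lemma}. Recall that a divisor $D$ has a basepoint at $P$ precisely when $L(D) = L(D - P)$, i.e.\ when every function in $L(D)$ in fact lies in $L(D-P)$ and so vanishes to the appropriate extra order at $P$. Setting $D = B + aP + bQ$, the condition $D$ has a basepoint at $P$ becomes
\[
  \dim L(B + aP + bQ) = \dim L(B + (a-1)P + bQ).
\]
So the entire proof reduces to understanding exactly when decreasing the coefficient of $P$ from $a$ to $a-1$ leaves the dimension unchanged.

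Next I would apply Lemma~\ref{dim_lemma} to both sides. The dimension $\dim L(B + aP + bQ)$ counts the discrepancies $B + iP + jQ \in \Delta(P,Q)$ with $i \le a$ and $j \le b$, and similarly $\dim L(B + (a-1)P + bQ)$ counts those with $i \le a-1$ and $j \le b$. The difference between these two counts is exactly the number of discrepancies of the form $B + aP + jQ$ (that is, with $i = a$ exactly) satisfying $j \le b$. Thus the dimension is unchanged, i.e.\ $P$ is a basepoint, if and only if there is \emph{no} discrepancy $B + aP + jQ \in \Delta(P,Q)$ with $j \le b$.

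Here is where Lemma~\ref{sigmaB} does the essential work. By that lemma, for the fixed value $i = a$ there is exactly one integer $j$ for which $B + aP + jQ \in \Delta(P,Q)$, namely $j = \sigma_B(a)$, since $\sigma_B$ is a well-defined function on $\Z$ and $\Delta_B(P,Q) = \{ B + iP + \sigma_B(i)Q : i \in \Z\}$. Therefore the condition ``no discrepancy $B + aP + jQ$ with $j \le b$'' is equivalent to $\sigma_B(a) > b$. Combining this with the basepoint reformulation from the previous step gives precisely the claim: $B + aP + bQ$ has a basepoint at $P$ if and only if $\sigma_B(a) > b$.

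The argument is short and each step is a direct invocation of a prior result, so I do not anticipate a genuine obstacle. The one point requiring a little care is the first reduction: making sure the basepoint condition $L(D) = L(D-P)$ is correctly phrased as an equality of dimensions and correctly matched against the counting formula, so that the ``missing'' discrepancy is exactly the one indexed by $i = a$. Once the uniqueness of $j = \sigma_B(a)$ from Lemma~\ref{sigmaB} is in hand, the equivalence $\sigma_B(a) > b$ follows immediately, and there are no calculations to grind through.
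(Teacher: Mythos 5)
Your proof is correct, but it takes a cleaner route than the paper does. You work directly with the raw counting formula \eqref{dimeq_1} of Lemma~\ref{dim_lemma}: the set counted for $B+aP+bQ$ contains the set counted for $B+(a-1)P+bQ$, their difference consists of discrepancies with $i=a$ exactly, and Lemma~\ref{sigmaB} says there is precisely one such discrepancy, namely $j=\sigma_B(a)$; so the two (finite) counts agree if and only if $\sigma_B(a)>b$. The paper instead first passes to the closed-form expression \eqref{dimeq_2}, in which each residue class $i \bmod m$ contributes a floor-function term, and then compares the two resulting sums term by term, with a separate analysis of the boundary indices satisfying $i+\sigma_B(i)=a+b$ and of whether $i \equiv a \bmod m$. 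The two arguments rest on the same underlying fact --- the dimension counts discrepancies in a quadrant, and dropping $a$ by one removes at most the single column $i=a$ --- but your set-theoretic difference avoids all of the mod-$m$ bookkeeping and is genuinely shorter; the only thing the paper's detour buys is that \eqref{dimeq_2} is the form of the dimension formula actually needed later, in Section~\ref{section:counting}, so the authors exercise it here. One small point worth making explicit in your write-up: the map $(i,j)\mapsto B+iP+jQ$ is injective because $P\neq Q$, so counting divisors in \eqref{dimeq_1} is the same as counting pairs $(i,j)$, which is what your column-removal argument implicitly uses.
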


\begin{proof}
The divisor $D = B + aP + bQ$ has a basepoint at $P$ if and only if $\dim(D) = \dim (D-P)$.
By \eqref{dimeq_2}, this may be written as
\begin{multline}\label{eqn:sums}
\sum_{\substack{i \bmod m \\ i + \sigma_B(i) \leq a+b}} \left( \left\lfloor \frac{a-i}{m} \right\rfloor + \left\lfloor \frac{b-\sigma_B(i)}{m} \right\rfloor + 1 \right) \\
= \sum_{\substack{i \bmod m \\ i + \sigma_B(i) \leq (a-1)+b}} \left( \left\lfloor \frac{(a-1)-i}{m} \right\rfloor + \left\lfloor \frac{b-\sigma_B(i)}{m} \right\rfloor + 1 \right) .
\end{multline}

Note that the slightly stronger condition on $i$ on the right hand side excludes those indices $i$ with $i + \sigma_B(i) = a+b$.
Suppose that $i$ satisfies $i  + \sigma_B(i) = a+b$.
Then the corresponding term on the left hand side of \eqref{eqn:sums} is 
\[
\left\lfloor \frac{a-i}{m} \right\rfloor + \left\lfloor \frac{i-a}{m} \right\rfloor + 1
= \begin{cases}
1, & i \equiv a \bmod m \\
0, & i \not \equiv a \bmod m .
\end{cases}
\]
If $i \equiv a\bmod m$, then \eqref{eqn:sums} cannot hold, because each of the remaining terms on the left hand side of \eqref{eqn:sums} is at least as large as the correpsponding term on the right hand side.
On the other hand, if $i \not\equiv a \bmod m$, the extra term on the left hand side does not effect the sum.
Upon excluding these terms, \eqref{eqn:sums} becomes
\[
\sum_{\substack{i \bmod m \\ i + \sigma_B(i) < a+b}} \left\lfloor \frac{a-i}{m} \right\rfloor 
= \sum_{\substack{i \bmod m \\ i + \sigma_B(i) < a+b}}  \left\lfloor \frac{(a-1)-i}{m} \right\rfloor .
\]

There is only a single index $i \bmod m$ for which the terms on each side differ, namely $i \equiv a \bmod m$.
Thus, equality holds exactly when this term does not appear, which is when $a + \sigma_B(a) > a + b$, or $\sigma_B(a) > b$. 
\end{proof}

Noting that $(a,b,c) \in \N^3$ is in the Weierstrass gap set $G(P,Q,R)$ if and only if the divisor $aP+bQ+cR$ has a basepoint at one of $P,Q,R$ yields the following.

\begin{corollary}\label{corollary:basepoint_G} The following are equivalent: 
\begin{enumerate}
\item The triple $(a,b,c) \in \N^3$ is in the Weierstrass gap set $G(P,Q,R)$. 
\item The divisor $aP + bQ+cR$ has a basepoint at one of $P$, $Q$, or $R$. 
\item Given points $P$, $Q$, and $R$, and $(a,b,c) \in \N^3$, $\sigma_{cR,P,Q}(a) > b$, $\sigma_{aP,Q,R}(b) > c$, or $\sigma_{bQ,R,P}(c) > a$. 
\end{enumerate}
\end{corollary}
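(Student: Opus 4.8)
The plan is to establish the three-way equivalence by observing that it rests on two facts: the general gap criterion that translates membership in $G(P,Q,R)$ into the existence of a basepoint, and \textbf{Proposition \ref{basepoint}}, which characterizes basepoints at a distinguished point via the discrepancy function $\sigma_B$. I would begin by proving the equivalence of (1) and (2), which is essentially the definition of the gap set unwound through the Riemann--Roch space formulation in \eqref{gap}. By definition, $(a,b,c) \in H(P,Q,R)$ exactly when there is a function $f$ with $(f)_\infty = aP + bQ + cR$, meaning the divisor $D = aP+bQ+cR$ is the full pole divisor of some function and has \emph{no} basepoint at any of $P$, $Q$, $R$. Negating, $(a,b,c) \in G(P,Q,R) = \N^3 \setminus H(P,Q,R)$ precisely when $D$ fails to be such a pole divisor, which by \eqref{gap} means $L(D) = L(D - P_j)$ for some $j \in \{P,Q,R\}$; that is, $D$ has a basepoint at one of the three points. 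This is the content of the parenthetical remark immediately preceding the corollary, so I would state it cleanly and cite \eqref{gap}.

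For the equivalence of (2) and (3), the strategy is to apply \textbf{Proposition \ref{basepoint}} once for each of the three points, taking care with which point plays the role of the distinguished ``$P$'' in the proposition and how the remaining divisor is absorbed into the base divisor ``$B$.'' Concretely, to test for a basepoint at $P$, I would write $D = aP + bQ + cR$ in the form $B + aP + bQ$ with $B = cR$, and apply \textbf{Proposition \ref{basepoint}} with respect to the ordered pair $(P,Q)$: this gives that $D$ has a basepoint at $P$ if and only if $\sigma_{cR,P,Q}(a) > b$, which is the first inequality in (3). Repeating with the cyclic relabeling $(P,Q,R) \mapsto (Q,R,P)$, I would write $D = B + bQ + cR$ with $B = aP$ and apply the proposition with respect to $(Q,R)$, yielding a basepoint at $Q$ iff $\sigma_{aP,Q,R}(b) > c$. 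The third relabeling $(P,Q,R)\mapsto (R,P,Q)$ gives a basepoint at $R$ iff $\sigma_{bQ,R,P}(c) > a$. Since $D$ has a basepoint at \emph{one of} $P,Q,R$ exactly when at least one of these three inequalities holds, condition (2) and condition (3) are equivalent.

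The main obstacle, and the step requiring the most care, is purely notational bookkeeping: ensuring the subscripts on the three $\sigma$ functions correctly track both the base divisor $B$ and the ordered pair of points relative to which discrepancies are computed. Recall from the discussion after \textbf{Lemma \ref{sigmaB}} that $\sigma_B = \sigma_{B,P,Q}$ genuinely depends on the ordered pair $(P,Q)$, not just on $B$, so the three instances of \textbf{Proposition \ref{basepoint}} are applied with three \emph{different} point-orderings, and the cyclic pattern $(P,Q) \to (Q,R) \to (R,P)$ in the subscripts must be matched against the base divisors $cR \to aP \to bQ$ exactly as written in (3). One also needs the hypothesis that the relevant multiples are linearly equivalent so that $\sigma_B$ is well-defined in each case; this holds on $\mc H$ since $mA \sim mB$ for any two rational points, and more generally the discrepancy machinery of \textbf{Lemma \ref{sigmaB}} applies on any curve once such an $m$ exists. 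I would present the $P$-case in full and then remark that the $Q$- and $R$-cases follow by the obvious cyclic symmetry, rather than writing out all three verbatim.
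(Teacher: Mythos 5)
Your proposal is correct and follows exactly the paper's route: the equivalence of (1) and (2) is the standard unwinding of the gap definition via \eqref{gap} (which the paper simply notes in the sentence preceding the corollary), and the equivalence of (2) and (3) is three applications of Proposition \ref{basepoint} with the base divisor $B$ and ordered point pair cycled precisely as you describe. Your bookkeeping of the subscripts $\sigma_{cR,P,Q}$, $\sigma_{aP,Q,R}$, $\sigma_{bQ,R,P}$ matches the statement, so nothing is missing.
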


\section{Discrepancies for Hermitian triangles}\label{sec:sigma}

In this section, we study discrepancies on the Hermitian curve which are supported on a Hermitian triangle.

\begin{definition*}
Let $T = \{P, Q, R\}$ be a Hermitian triangle.
For any integers $i$ and $j$, let $\sigma_{ij} = \sigma_{ij}(P,Q,R)$ be the unique integer satisfying
\[
iP + jQ + \sigma_{ij} R \in \Delta(Q,R).
\]
\end{definition*}

Note that this definition depends, a priori, on the particular ordering of the points $P$, $Q$, and $R$. 
We will see in Theorem \ref{thm:sigma}, however, that it is independent of this ordering, and that the function $(i,j) \mapsto \sigma_{ij}(T)$ depends only on the type $d$ of the triangle $T$.

The $\sigma_{ij}$ are related to the function $\sigma_B$ from Lemma \ref{sigmaB} as follows.
Since we are dealing with discrepancies for the points $Q$ and $R$, these two points take the roles of $P$ and $Q$ in the discussion in Section \ref{section:discrepancies}.
Moreover, with $B = iP$, we have $\sigma_{B}(j) = \sigma_{ij}$ as defined above.

By Lemma \ref{sigmaB}, the $\sigma_{ij}$ are determined by their values on any complete set of representatives $(i,j)$ modulo $m$.
In particular, for any integers $s$ and $t$, we have
\begin{equation}\label{eqn:sigma_ijm}
  \sigma_{i + sm, j + tm} = \sigma_{ij} - sm - tm .
\end{equation}

The function $(i,j) \mapsto \sigma_{ij}$ admits a particularly simple description on the domain
\[
  \Lambda =  \{ (i,j) : 1 \leq j, i-j+1 \leq q+1\} ,
\]
which we describe in the following theorem.

\begin{theorem}\label{thm:sigma}
Let $T$ be a Hermitian triangle. 
Then the function $(i,j) \mapsto \sigma_{ij}(T)$ is well-defined, and depends only on the type of $T$.
In particular,
\begin{enumerate}[(a)]
\item If $T = T_1$ is collinear, then for all $(i,j) \in \Lambda$, we have
\begin{equation}\label{eqn:sigma}
\sigma_{ij}(T_1) = 2g - 2 - jq .
\end{equation}
\item If $T = T_d$ is of type $d$, then 
\begin{equation}\label{sigma_diff}
\sigma_{ij}(T_d) =
\begin{cases}
\sigma_{ij}(T_1) + 1, & i \not\equiv 0 \bmod d \text{ and } j \equiv i \bmod m, \\
\sigma_{ij}(T_1) - 1, & i \not\equiv 0 \bmod d \text{ and } j \equiv i + 1 \bmod m, \\
\sigma_{ij}(T_1),  & \text{otherwise.}
\end{cases}
\end{equation}
\end{enumerate}
\end{theorem}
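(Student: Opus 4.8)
The plan is to pin down $\sigma_{ij}$ by exhibiting, for each $(i,j)$, a single value of $\sigma$ with $iP+jQ+\sigma R\in\Delta(Q,R)$; by the uniqueness built into Lemma~\ref{sigmaB} this value must then be $\sigma_{ij}$. Membership in $\Delta(Q,R)$ I would test with Lemma~\ref{fg_lemma}, producing the two required inequalities of Riemann--Roch spaces by writing down explicit rational functions assembled from $x$, $y$, and the $y_{AB}$ of Section~\ref{sec:triangles}, whose divisors are recorded there. Two reductions precede the computation. First, since automorphisms preserve Riemann--Roch spaces and linear equivalence, $\Delta(\cdot,\cdot)$ is automorphism-invariant, so by Lemma~\ref{orbits} it suffices to treat the standard representatives $\{P_\infty,P_{00},P_{0\beta}\}$ in the collinear case and $\{P_\infty,P_{00},P_{\alpha\alpha}\}$ with $\ord(1-\alpha)=d$ in the type-$d$ case. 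Second, because $mP\sim mQ\sim mR$ while $\Delta(Q,R)$ is a union of divisor classes, $\sigma_{ij}$ is recovered from its values on a fundamental domain for translation by $m\Z\times m\Z$ via \eqref{eqn:sigma_ijm}; for this $\Lambda$ is convenient, so it is enough to prove the stated formulas for $(i,j)\in\Lambda$.

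For part (a) I would fix the collinear representative together with a canonical divisor supported on the triangle, namely $K\sim(2g-2)P_\infty$ (valid because $2g-2=m(q-2)$ is a multiple of $m$ and each rational point carries the symmetric semigroup $\langle q,q+1\rangle$). The decisive feature is that all three points lie on the line $X=0$, so $x$ vanishes simultaneously at $Q$ and $R$; multiplying by $x$ trades vanishing at $Q$ and $R$ against a pole at $P_\infty$, and this is what renders the pole order at $R$ needed to attain pole order $j$ at $Q$ insensitive to the coefficient $i$, producing the $i$-free value $\sigma_{ij}(T_1)=2g-2-jq$. Concretely I would compute $\dim L(iP+jQ+\sigma R)$ directly from the plane model as $\sigma$ varies, locate the jump, and confirm the two inequalities of Lemma~\ref{fg_lemma}; uniqueness then identifies the value.

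For part (b) I would compare the type-$d$ representative with the collinear one class by class modulo $m$. The sole structural difference is that $R=P_{\alpha\alpha}$ no longer lies on $X=0$: here $x(R)=\alpha\neq0$, while the function $y_\alpha=y-\alpha^q(x-1)$ of \eqref{yalpha}, with $\div(y_\alpha)=m(P_{\alpha\alpha}-P_\infty)$, takes over the role played in the collinear case by an analogous function vanishing on the line. Tracking how a single optimal function gains or loses one zero or pole at $R$ as $\alpha$ moves off the line shows that $\sigma_{ij}$ can differ from $\sigma_{ij}(T_1)$ by at most $1$, and the precise location of the change is where $d=\ord(1-\alpha)$ enters: the relevant vanishing at $R$ is governed by powers of $\zeta=1-\alpha$, so the shift is nonzero exactly when $\zeta^i\neq1$, i.e. $i\not\equiv0\bmod d$, and then equals $+1$ in the class $j\equiv i\bmod m$ and $-1$ in the class $j\equiv i+1\bmod m$, giving \eqref{sigma_diff}. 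This class-by-class perturbation analysis — and in particular showing that the answer depends on $\alpha$ only through $d$, which is exactly what will collapse the eventual count to the divisors of $q+1$ — is the step I expect to be the main obstacle, since it demands tight control of vanishing orders at the off-line point $R$.

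Finally, well-definedness (independence of the ordering of $P,Q,R$) I would split into two symmetries. Cyclic invariance $\sigma_{ij}(P,Q,R)=\sigma_{ij}(Q,R,P)$ is immediate: the $3$-cycle automorphism of Lemma~\ref{3cycle} carries $\Delta(Q,R)$ to $\Delta(R,P)$ and permutes the coefficients accordingly. Invariance under swapping the last two points reduces, via $\Delta(Q,R)=\Delta(R,Q)$ (the diamond of Section~\ref{section:discrepancies}), to the self-transpose identity $\sigma_{i,\sigma_{ij}}=j$. For $T_1$ this is automatic, since $2g-2-jq\equiv j\bmod m$ forces $\sigma_{ij}\equiv j\bmod m$, whence $\sigma_{i,\sigma_{ij}}=j$ by \eqref{eqn:sigma_ijm}; for $T_d$ the compensating $+1$ and $-1$ at the adjacent classes $j\equiv i$ and $j\equiv i+1$ are precisely what the transpose pairs up, so a short check confirms the identity. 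Together these give invariance under all of $S_3$, and dependence only on $d$ then follows from Lemma~\ref{orbits} together with the computation in (b).
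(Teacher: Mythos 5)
Your scaffolding matches the paper's: reduce to a standard-form representative via $\Aut(\mc H)$ and Lemma \ref{orbits}, reduce to the fundamental domain $\Lambda$ via \eqref{eqn:sigma_ijm}, certify membership in $\Delta(Q,R)$ through Lemma \ref{fg_lemma}, and let the uniqueness in Lemma \ref{sigmaB} identify the value; your handling of well-definedness (cyclic invariance from Lemma \ref{3cycle}, transposition invariance reduced to the identity $\sigma_{i,\sigma_{ij}}=j$) is correct and is essentially the paper's own argument. The gap is that the actual content of parts (a) and (b) --- verifying that the stated values really are discrepancies --- is never carried out, and what you offer in its place is not an argument. ``Tracking how a single optimal function gains or loses one zero or pole at $R$ as $\alpha$ moves off the line'' has no meaning here: over a finite field there is no deformation or semicontinuity available, and in any case the answer depends on the \emph{arithmetic} of $\alpha$ through $d=\ord(1-\alpha)$, which no geometric perturbation of $\alpha$ can detect. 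The assertions that $\sigma_{ij}(T_d)$ differs from $\sigma_{ij}(T_1)$ by at most $1$, and that the shifts occur exactly on the classes $j\equiv i$ and $j\equiv i+1 \bmod m$ when $i\not\equiv 0\bmod d$, are precisely what must be proved; reading them off from the expected answer is circular. You flag this step as ``the main obstacle,'' which is accurate --- it \emph{is} the theorem. Your part (a) plan (``compute $\dim L(iP+jQ+\sigma R)$ directly and locate the jump'') is likewise a restatement of the problem rather than a method, though for the collinear triangle it could plausibly be completed with routine work.

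Concretely, what fills this hole in the paper is a trio of constructions that your proposal would need to reproduce or replace. First, Lemma \ref{shift}: the function $w_d=(x-y)^d-(x-y-1)^d+(x-1)^d$ with $\operatorname{div}(w_d)=-qdP+dQ+dR+E$, $E\geq 0$ supported off $T$, which propagates a certificate at $(i,j)$ diagonally to $(i+kd,j+kd)$ and so cuts the verification from all of $\Lambda$ (of size $m^2$) down to the strip $j\leq d$. Second, Lemma \ref{duality}: the involution $I(i,j)=(2m-i,m+1-j)$ of $\Lambda$ together with the isomorphism $L(D)/L(D-R)\cong L(K-D^*+Q+R)/L(K-D^*+Q)$ induced by multiplication by $y_{PQ}/y_{PR}^3$, which lets one check only one of the two inequalities in Lemma \ref{fg_lemma} per point of $\Lambda$; your plan requires producing functions for both inequalities, roughly doubling the work. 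Third, and most essentially, the explicit functions $g_{ij}$ in the three cases $\Lambda_d$, $\Lambda_d^+$, $\Lambda_d^-$: reductions of $(x-y)^{j-1}$ modulo $yy_\alpha$, of $(x-\alpha)^j$ modulo $y_\alpha$, and of $x^{j-1}$ modulo $y$, where the hypothesis $\ord(1-\alpha)=d>j$ enters through the nonvanishing $a(1)\neq 0$ for $a(t)=(t-\alpha)^j-t^j$. That computation is exactly where your intuition that ``the vanishing at $R$ is governed by powers of $1-\alpha$'' becomes a proof, and nothing in your proposal substitutes for it.
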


Before proving Theorem \ref{thm:sigma}, we provide two lemmas along with the following discussion as preparation. 
We first resolve the question of well-definedness of $\sigma_{ij}(T)$, under the assumption that parts (a) and (b) of Theorem \ref{thm:sigma} hold for a particular ordering $(P,Q,R)$ of the triangle $T$.
By Lemma \ref{3cycle}, it will be enough to show that $\sigma_{ij}$ is invariant under a transposition of two of the points of $T$, e.g., that
\[
  \sigma_{ij}(P,Q,R) = \sigma_{ij}(P,R,Q) .
\]

Since $\Delta_{iP}(Q,R)$ is a union of divisor classes, it will suffice to show that
\begin{equation}\label{QR}
  j Q + \sigma_{ij} R \sim \sigma_{ij} Q + jR ,
\end{equation}
with $\sigma_{ij}$ as given by the theorem.

Suppose first that $T$ is collinear, so that $\sigma_{ij} = 2g - 2 - jq$ for $(i,j) \in \Lambda$.
Then since $2g-2 = (q-2)m$, we have $2g - 2 - jq \equiv j \bmod m$, and the two divisors in \eqref{QR} differ by a multiple of $m(Q-R) \sim 0$.

Now suppose that $T$ is of type $d$. If $i \equiv 0 \bmod d$ or $j$ is neither $i$ nor $i+1$ modulo $m$, then we have the same formula for $\sigma_{ij}$, and the same argument applies. Otherwise, we have
\begin{align*}
  iQ + \sigma_{ii}(T) R
&= iQ + (\sigma_{ii}(T_1)+1) R \\
&\sim \sigma_{ii}(T_1) Q + (i+1) R \\
&= (\sigma_{i,i+1}(T_1) - 1) Q + (i+1) R
= \sigma_{i,i+1}(T) Q + (i+1) R ,
\end{align*}
which deals with both the cases $j \equiv i \bmod m$ and $j\equiv i+1 \bmod m$.

Thus, to prove the theorem, we may assume without loss of generality that $T = \{P,Q,R\}$ is of standard form, i.e., that $P = P_\infty$, $Q = P_{00}$, and $R = P_{\alpha\beta}$ with either $\alpha = 0$ or $\alpha = \beta$. We now pause to present two lemmas that will be of use in the proof of Theorem \ref{thm:sigma}.

\begin{lemma}\label{shift}
Let $T = \{P,Q,R\}$ be a Hermitian triangle of type $d$. Then there is $w_d \in \F_{q^2}(\mc H)$ with divisor 
\[
  \operatorname{div}(w_d) = -qdP + dQ + dR + E, 
\]
where $E \geq 0$ has support disjoint from $T$.
\end{lemma}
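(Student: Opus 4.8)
The goal is to construct a function $w_d$ on $\mc H$ whose divisor equals $-qd\,P + d\,Q + d\,R + E$ with $E \geq 0$ supported away from $T$. The plan is to reduce to the standard form $P = P_\infty$, $Q = P_{00}$, $R = P_{\alpha\beta}$ (with $\alpha = 0$ or $\alpha = \beta$) and $\ord(1-\alpha) = d$, and then to write down an explicit rational function as a product or ratio of the basic functions $x$ and $y_\gamma$ introduced in \eqref{yalpha}. Recall that $\operatorname{div}(y) = m(Q - P)$ and $\operatorname{div}(y_\alpha) = m(R - P)$, and that $\operatorname{div}(x) = -qP + \sum_{\beta'^q+\beta'=0} P_{0\beta'}$. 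The key arithmetic input is that $\ord(1-\alpha) = d$, i.e.\ $(1-\alpha)^d = 1$; since $y_\alpha = y - \alpha^q(x-1)$, this order condition should govern how the zeros and poles of the relevant functions combine along the fiber over $P$.

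\textbf{Strategy.} First I would look for a function that behaves like $y_\alpha/y$ or a related combination. The function $y_\alpha/y$ has divisor $m(R - P) - m(Q - P) = m(R - Q)$, which has the wrong pole structure at $P$. Instead, the presence of $-qdP$ suggests incorporating $x$, which has a pole of order $q$ at $P$. The natural candidate is a function of the shape $w_d = x^{d} \cdot g$, where $g$ is built from the $y_\gamma$'s to produce simple zeros of order $d$ at both $Q$ and $R$ and to cancel the zeros of $x^d$ at the points $P_{0\beta'}$ off $T$. More precisely, since $\ord(1-\alpha) = d$ means $\zeta = 1-\alpha$ is a primitive $d$th root of unity with $\zeta^{q+1}=1$, I expect that a product of the form $\prod_{k=0}^{d-1}(\text{linear function vanishing at } \phi_{\zeta^k}\text{-translates})$ collapses nicely; the $d$ translates of $R$ (or of the line configuration) under the cyclic stabilizer action should have a symmetric function with the desired divisor.

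\textbf{Carrying it out.} I would proceed as follows. Begin by computing $\operatorname{div}(y_\alpha) = m(R-P)$ and, for the collinear case $\alpha = 0$, note $Q$ and $R$ both lie over $x=0$. Then I would form the specific combination dictated by the root-of-unity condition: set $\zeta = 1 - \alpha$ and consider the function whose numerator is a product over the orbit $\{(1-\zeta^k) : 0 \le k < d\}$ of the shifted $y$-functions, using \eqref{yalpha} to express each $y_{\gamma}$ explicitly as $y - \gamma^q(x-1)$. The exponent $d$ on each of $P$, $Q$, $R$ and the coefficient $q$ on the pole at $P$ strongly indicate that $w_d$ is essentially $x^d$ times a degree-$d$ polynomial in $y$ and $x$ that factors through the $d$th-power/norm map associated to $\zeta$. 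I would verify the divisor by a local valuation computation at each of $P$, $Q$, $R$: at $P = P_\infty$ the leading term should give pole order $qd$ (each factor of $x$ contributing $q$, with the $y$-factors contributing total pole order $md = (q+1)d$ but partially cancelling against zeros to leave $qd$ — this is exactly where the order-$d$ condition must force the precise cancellation); at $Q$ and $R$ the valuation should come out to $+d$; and the remaining zeros $E$ land on points $P_{0\beta'}$ or other points disjoint from $T$.

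\textbf{Main obstacle.} The crux is pinning down the exact function and checking that the order condition $\ord(1-\alpha)=d$ is precisely what makes the pole at $P$ equal to $qd$ rather than $(q+1)d$ or something intermediate. The $y_\gamma$ factors each have pole order $m = q+1$ at $P$, so a product of $d$ of them has pole order $(q+1)d$ there; to reduce this to $qd$ I need the leading coefficients in a local parameter at $P$ to cancel, dropping the pole order by exactly $d$. This cancellation is a polynomial identity in $\zeta$ equivalent to $\sum_{k}\zeta^{k}$-type sums vanishing, which holds exactly because $\zeta$ is a primitive $d$th root of unity with $\zeta^{d}=1$ — so $\ord(1-\alpha)=d$ enters as the hypothesis guaranteeing the needed vanishing. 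Separating the collinear case ($\alpha=0$, $d=1$) from the noncollinear cases ($\alpha=\beta$, $d>1$) and confirming that $E$ has no support on $T$ in either regime will be the remaining bookkeeping. I expect the collinear case to reduce to an elementary identity involving $x$ and $y$ alone, serving as a useful warm-up for the general product construction.
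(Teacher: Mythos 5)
There is a genuine gap here: your proposal never produces an actual function, and the mechanism you lean on for the key step cannot work. You want a product of the functions $y_\eta$ (each with pole of order $m=q+1$ at $P$) to have pole order at $P$ drop from $(q+1)d$ to $qd$ because ``leading coefficients in a local parameter cancel.'' But valuations are additive on products: $v_P(fg)=v_P(f)+v_P(g)$ always, so no cancellation is possible; leading-term cancellation can lower a pole order only in a \emph{sum}. Relatedly, for $1<d<q+1$ no product of $y$-type functions can give a zero of order exactly $d$ at $R=P_{\alpha\alpha}$: each $y_\eta$ vanishes to order $q+1$ wherever it vanishes, and throwing in powers of $x$ does not help since $x(R)\neq 0$ for a noncollinear triangle. (A pure product \emph{does} work in the one case $d=q+1$, where the paper takes $w_{q+1}=y_{PQ}\,y_{PR}\,y_{PS}^{\,q-2}$ for an auxiliary point $S$ --- a case your sketch does not isolate.)

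What the paper actually does for $1<d<q+1$ is take the additive combination $w_d=(x-y)^d-(x-y-1)^d+(x-1)^d$ (and $w_1=x$ in the collinear case, so your guess there is right). The pole order at $P$ drops to $qd$ because the $y^d$ and $xy^{d-1}$ terms of the first two summands cancel; this is pure binomial algebra and has nothing to do with roots of unity, contrary to your ``main obstacle'' diagnosis. The roots of unity enter instead through the factorization
\[
(x-1)^d-(x-y-1)^d=\prod_{\zeta^d=1}\bigl((x-1)-\zeta(x-y-1)\bigr),
\]
whose factors are, up to constants, exactly the functions $y_\eta$ with $\ord(1-\eta)\mid d$ --- so your ``product over the orbit $\{1-\zeta^k\}$'' does appear, but as one \emph{summand} of $w_d$, not as the whole function. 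The hypothesis $\ord(1-\alpha)=d$ is then used at $R$, not at $P$: it guarantees that $y_\alpha$ occurs among these factors, so this product vanishes at $R$ (and at $Q$, via $\eta=0$) to order at least $q+1>d$, whence $v_R(w_d)=v_R\bigl((x-y)^d\bigr)=d$ and likewise $v_Q(w_d)=d$. Both the location of the decisive cancellation (a sum, at $P$) and the role of the type-$d$ hypothesis (the valuation at $R$) are misplaced in your sketch, and the explicit function that makes the argument go through is missing.
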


\begin{proof}

If $d = q+1$, then let $S$ be any point of $\mc H(\F_{q^2}) \setminus T$.
Then the function $w_{q+1} = y_{PQ} y_{PR} y_{PS}^{q-2}$ has the desired divisor.

Now assume that $d < q+1$. With the conventions on $P,Q,R$ described above, we set
\[
w_d = (x-y)^d - (x-y-1)^d + (x-1)^d .
\]

If $d = 1$, then the function $w_d = x$ is as desired.
Assume that $1 < d < q+1$. The function $w_d$ is a polynomial in $x$ and $y$ of degree $d$, hence is regular away from $P = P_\infty$.
Since $d < q+1$, the pole orders $iq + j(q+1)$ at $P$ of the monomials $x^i y^j$ appearing in $w_d$ are distinct.
Moreover, the $y^d$ and $xy^{d-1}$ terms of $w_d$ are zero, so that
\[
v_{P}(w_d) 
= \min(v_{P}(x^d),  v_{P}(y^{d-1}) )
= \min( -dq, -dq + q+1-d)
= - dq .
\]

To check the valuations of $w_d$ at the points $Q$ and $R$, we note that 
\begin{align*}
  w_d 
&= (x-y)^d + \prod_{\zeta^d = 1} \Big( (x-1) - \zeta (x-y-1) \Big) \\
&= (x-y)^d + \prod_{\zeta^d = 1} \zeta(y - (1-\zeta^{-1})(x-1)) \\
&= (x-y)^d + \prod_{ \ord(1-\eta) \mid d} (1-\eta^q)^{-1} y_{\eta} .
\end{align*}

For each $\eta$ with $\ord(1-\eta)$ dividing $d$, the product on the right hand side is divisible by $y_\eta$, so it vanishes at $P_{\eta\eta}$ to order at least $q+1 > d$. Therefore, 
\[
  v_{P_{\eta\eta}}(w_d) = v_{P_{\eta\eta}}( (x-y)^d) = d .
\]
Since $Q = P_{00}$ and $R = P_{\alpha\alpha}$ are both of this form, $v_Q(w_d) = v_R(w_d) = d$. \qedhere

\end{proof}

To prove the rest of Theorem \ref{thm:sigma}, it will suffice by Lemma \ref{fg_lemma} to show for each $(i,j) \in \Lambda$ that 
\[
 L(D-R) \neq L(D)
 \quad\text{and}\quad
 L(K-D+Q) \neq L(K-D+Q+R) ,
\]
where $D = iP + jQ + \sigma_{ij} R$ and $K = (2g-2)R$.

That said, we may actually choose a single one of these conditions to verify for all $(i,j) \in \Lambda$.
This is because the spaces involved exhibit a duality with respect to the involution $I \colon \Lambda \to \Lambda$ defined by
\[
  I (i,j) = (2m-i, m+1-j) .
\]

The map $I$ interchanges the two subsets
\begin{align*}
  \Lambda_d^+ &= \{ (i,j) \in \Lambda \colon i \not\equiv 0 \bmod d \text{ and } j \equiv i \bmod m\}, \\
  \Lambda_d^- &= \{ (i,j) \in \Lambda \colon i \not\equiv 0 \bmod d \text{ and } j \equiv i+1 \bmod m\}, 
\end{align*}
on which $\sigma_{ij}(T)$ differs from $\sigma_{ij}(T_1)$, and leaves the complement $\Lambda_d = \Lambda \setminus (\Lambda_d^+ \cup \Lambda_d^-)$ invariant.

\begin{lemma}\label{duality}
Let $T = \{P,Q,R\}$ be a Hermitian triangle.
Let $(i,j) \in \Lambda$, and write $I(i,j) = (i^*,j^*)$.
Let $D = iP+jQ + \sigma_{ij} R$ and $D^* = i^* P + j^* Q + \sigma_{i^*j^*} R$, with $\sigma_{ij}$ as in the statement of Theorem \ref{thm:sigma}. 
Then there is an isomorphism of vector spaces
\[
L(D) / L(D-R)  \longrightarrow L(K-D^*+Q+R) / L(K-D^*+Q) .
\]
\end{lemma}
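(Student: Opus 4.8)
The plan is to realize the desired isomorphism as multiplication by a single rational function. The whole argument rests on the linear equivalence
\[
  D + D^* \sim K + Q + R,
\]
which I will establish first. Writing $I(i,j) = (i^*,j^*) = (2m-i,\,m+1-j)$, I read off $i+i^* = 2m$ and $j+j^* = m+1$, so the only nontrivial ingredient is the value of $\sigma_{ij} + \sigma_{i^*j^*}$.

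For this sum I would argue that the type-$d$ corrections in \eqref{sigma_diff} cancel. Indeed, $I$ interchanges $\Lambda_d^+$ and $\Lambda_d^-$ and preserves $\Lambda_d$: reducing modulo $m$ gives $i^* \equiv -i$ and $j^* \equiv 1-j$, so $j \equiv i \bmod m$ forces $j^* \equiv i^*+1 \bmod m$ and conversely, while $i \not\equiv 0 \bmod d$ is equivalent to $i^* \not\equiv 0 \bmod d$ since $d \mid m$. Hence on $\Lambda_d^+ \cup \Lambda_d^-$ the $+1$ attached to $(i,j)$ is matched by the $-1$ attached to $(i^*,j^*)$ (and conversely), while on $\Lambda_d$ there is no correction. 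In every case
\[
  \sigma_{ij}(T_d) + \sigma_{i^*j^*}(T_d) = \sigma_{ij}(T_1) + \sigma_{i^*j^*}(T_1) = 2(2g-2) - q(m+1),
\]
using $\sigma_{ij}(T_1) = 2g-2-jq$ and $j+j^* = m+1$. A short computation with $2g-2 = (q-2)m$ and $m=q+1$ shows this equals $2g-1-3m$. Combining this with the relations $mP \sim mQ \sim mR$ then gives
\[
  D + D^* = 2mP + (m+1)Q + (2g-1-3m)R \sim Q + (2g-1)R = K + Q + R,
\]
as claimed.

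With the equivalence in hand, the isomorphism is routine. Writing $A = D$ and $A' = K-D^*+Q+R$, the relation $A \sim A'$ provides a function $h \in \F_{q^2}(\mc H)$ with $\operatorname{div}(h) = A - A'$. Multiplication by $h$ sends $L(A)$ isomorphically onto $L(A')$, with inverse multiplication by $h^{-1}$. A direct check of valuations shows moreover that it carries $L(A-R) = L(D-R)$ into $L(A'-R) = L(K-D^*+Q)$ and that $h^{-1}$ carries the latter back into the former, so multiplication by $h$ restricts to an isomorphism $L(D-R) \cong L(K-D^*+Q)$. Passing to quotients yields the asserted isomorphism
\[
  L(D)/L(D-R) \xrightarrow{\ \sim\ } L(K-D^*+Q+R)/L(K-D^*+Q).
\]

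I expect the only real work to be bookkeeping: confirming that $I$ genuinely interchanges $\Lambda_d^+$ and $\Lambda_d^-$ so that the $\pm 1$ corrections cancel, and checking the arithmetic identity $2(2g-2)-q(m+1) = 2g-1-3m$. The construction of $h$ and the descent to quotients are formal once $D+D^* \sim K+Q+R$ is known, so this linear equivalence is the crux of the argument.
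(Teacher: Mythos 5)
Your proof is correct and follows essentially the same route as the paper: both rest on the identity $\sigma_{ij} + \sigma_{i^*j^*} = 2g-1-3m$ (the paper's equation \eqref{dual_sigmas}, which you verify in more detail via the cancellation of the $\pm 1$ corrections under $I$) and then realize the isomorphism as multiplication by a rational function with divisor $D - (K-D^*+Q+R) = 2mP + mQ - 3mR$. The only cosmetic difference is that the paper names this function explicitly as $y_{PQ}/y_{PR}^3$, whereas you invoke its existence abstractly from the linear equivalence $D + D^* \sim K + Q + R$.
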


\begin{proof}
With $\sigma_{ij}$ as in Theorem \ref{thm:sigma}, it follows that 
\begin{equation}\label{dual_sigmas}
  \sigma_{ij}(T) + \sigma_{i^*j^*}(T) 
= 2g - 1 - 3m .
\end{equation}
for all triangles $T$ and all $(i,j) \in \Lambda$.

The desired isomorphism is induced by multiplication by the function $y_{PQ}/y_{PR}^3$, which has divisor $2mP + mQ - 3mR$.
Indeed, for $f \in L(D)$, we find, with the help of \eqref{dual_sigmas}, that
\begin{align*}
\operatorname{div}(f \cdot y_{PQ}/y_{PR}^3) 
&\geq -D + 2mP + mQ - 3mR \\
&= (2m-i)P + (m-j)Q - (\sigma_{ij} + 3m)R \\
&= i^* P + (j^*-1)Q + (2g-1 - \sigma_{i^*j^*}) R \\
&= -(K - D^* + Q + R) ,
\end{align*}
so $f\cdot y_{PQ}/y_{PR}^3 \in L(K-D^*+Q+R)$.
The same argument shows that $L(D-R)$ consists of those functions mapped to $L(K-D^*+Q)$.
\end{proof}

\begin{proof}[Proof of Theorem \ref{thm:sigma}]

By Lemmas \ref{fg_lemma} and \ref{duality}, it will suffice to produce, for each $(i,j) \in \Lambda$, a function $f_{ij}$ in $L(K-D+Q+R) \setminus L(K-D+Q)$, where $D = iP + jQ + \sigma_{ij}R$. Now
\[
  K-D+Q+R
= - iP - (j-1)Q - (\sigma_{ij} - (2g-1))R ,
\]
so we are looking for $f_{ij}$ to satisfy
\[
  v_P(f_{ij}) \geq i,
\qquad
  v_Q(f_{ij}) \geq j-1,
\qquad
  v_R(f_{ij}) = \sigma_{ij} - (2g-1),
\]
and to be regular elsewhere. With $\sigma_{ij}$ as in the statement of the theorem,
\[
  \sigma_{ij} - (2g-1) = - 1 - jq + \epsilon_{ij},
\]
where $\epsilon_{ij} = 0$ if $(i,j) \in \Lambda_d$ and $\epsilon_{ij} = \pm 1$ if $(i,j) \in \Lambda_d^\pm$.
As it will be more convenient to work in $\mc O_{\mc H,P} = \F_q[x,y]$, instead of $f_{ij}$ itself we look for $g_{ij} = f_{ij} y_{PR}^j$ satisfying
\begin{equation}\label{PQR}
v_P(g_{ij}) \geq (i-j) - jq, \qquad
v_Q(g_{ij}) \geq j-1, \qquad
v_R(g_{ij}) = j-1 + \epsilon_{ij} . 
\end{equation}

Let $w_d$ be as in Lemma \ref{shift}. Given $g_{ij}$ as desired, the function $g_{ij} w_d^k$ satisfies the conditions in \eqref{PQR} but with $(i,j)$ replaced with $(i+kd,j+kd)$. It follows that we only need to find $g_{ij}$ for $(i,j) \in \Lambda$ with $j \leq d$.
If $T$ is collinear, i.e., if $d=1$, then the function $g_{i1} = 1$ is as desired. This completes the proof of part (a) of the theorem.
We now assume that $d>1$, so that $R = P_{\alpha\alpha}$.

\def\dd{3}
\def\mm{6}
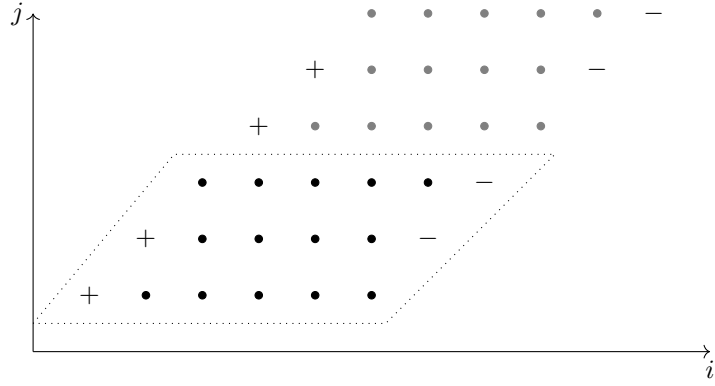
\begin{figure}[ht]
\[
\begin{tikzpicture}[scale=.75]
\draw[->] (0,0)--(2*\mm,0) node[below]{$i$};
\draw[->] (0,0)--(0,\mm) node[left]{$j$};
\draw[dotted] (0,.5)--(\dd-.5,\dd+.5)--(\mm+\dd+.25,\dd+.5)--(\mm+.25,.5)--(0,.5);

\foreach \point in {(2, 1), (3, 1), (4, 1), (5, 1), (6, 1), (3, 2), (4, 2), (5, 2), (6, 2), (3, 3), (4, 3), (5, 3), (6, 3), (7, 3)} {
	\node[draw,circle,inner sep=1pt,fill] at \point {};
}
\foreach \point in {(5, 4), (6, 4), (7, 4), (8, 4), (9, 4), (6, 5), (7, 5), (8, 5), (9, 5), (6, 6), (7, 6), (8, 6), (9, 6), (10, 6)} {
	\node[draw,circle,inner sep=1pt,fill, color=gray] at \point {};
}
\foreach \point in {(1, 1), (2, 2), (4,4), (5,5)} {
	\node[] at \point {$+$};
}
\foreach \point in {(7, 2), (8, 3), (10, 5), (11, 6)} {
	\node[] at \point {$-$};
}
\end{tikzpicture}
\]
\caption{The set $\Lambda = \Lambda_d \cup \Lambda_d^+ \cup \Lambda_d^-$ for $(q,d)=(5,3)$, with elements of $\Lambda_d^\pm$ represented by $\pm$, and elements of $\Lambda_d$ by $\bullet$. The dotted border surrounds the set of $(i,j) \in \Lambda$ with $j \leq d$.}
\label{fig_gamma}
\end{figure}

We divide the remainder of the proof into three cases corresponding to the value of $\epsilon_{ij}$, i.e., whether $(i,j)$ in $\Lambda_d$, $\Lambda_d^+$, or $\Lambda_d^-$.
See Figure \ref{fig_gamma} for a visual representation of these subsets of $\Lambda$ for $(q,d) = (5,3)$.

\textbf{Case 1.} Suppose that $(i,j) \in \Lambda_d$ and $j \leq d$.
Then either $i+1 \leq j \leq j+q-1$, or $(i,j)$ is one of the two points $(d,d)$ or $(q+1,1)$.
For the two isolated cases, one can quickly check that $g_{ij} = (x-y)^{j-1}$ satisfies \eqref{PQR}.
Now suppose that $j+1 \leq i \leq j+q-1$.
By \eqref{PQR}, it will suffice to produce $g_{ij} \in L((1+(j-1)q)P)$ satisfying $v_Q(g_{ij}) = v_R(g_{ij}) = j-1$.
Since $yy_\alpha = y^2-\alpha^qxy + \alpha^qy$, there is a polynomial $h(x)$ of degree $\leq j-2$ such that
\[
  (x-y)^{j-1} \equiv x^{j-1} + y h(x) \mod yy_\alpha .
\]
Let $g_{ij}$ be the function on the right hand side of this congruence.
Then  
\[
v_\infty(g_{ij}) \geq \min( v_\infty(x^{j-1}), v_\infty(y h(x) ))
=  - 1 - (j-1)q .
\]

Moreover, since $yy_\alpha$ vanishes to order $q+1 > j-1$ at each of $Q$ and $R$, and $(x-y)^{j-1}$ vanishes with order $j-1$ at these two points, we have $v_Q(g_{ij}) = v_R(g_{ij}) = j-1$.

\textbf{Case 2.} Suppose that $(i,j) \in \Lambda_d^+$ with $j \leq d$. Then $i=j$ and $1 \leq j \leq d-1$. We want to find $g_{ij}$ satisfying
\[
v_P(g_{ij}) \geq - jq, \qquad
v_Q(g_{ij}) \geq j-1, \qquad
v_R(g_{ij}) = j . 
\]

Since $\ord(1-\alpha) = d > j$, the polynomial $a(t) = (t-\alpha)^j - t^j$ satisfies $a(1) \neq 0$.
Furthermore, the polynomial $b(t) = a(t) - a(1) t^{j-1}$ is nonzero and satisfies $b(1) = 0$, so that there is $c(t)$ of degree $\deg b(t) - 1 \leq j-2$ such that 
\[
  (t-\alpha)^j - t^j - a(1) t^{j-1} = (t-1) c(t) .
\]

We now substitute $t = x$ and recall from \eqref{yalpha} that $x-1 = \alpha^{-q}(y-y_\alpha)$ to obtain
\begin{align*}
  (x-\alpha)^j 
&= x^j + a(1) x^{j-1} + \alpha^{-q}(y-y_\alpha) c(x)  \\
&\equiv x^j + a(1) x^{j-1} + \alpha^{-q} y c(x) \mod y_\alpha .
\end{align*}

Let $g_{ij}$ be the function on the right hand side of this congruence.
Then
\[
  v_P(g_{ij}) \geq \min( -jq, -(q+1) - q \deg(c) ) = -jq .
\]
Moreover, $v_Q(g_{ij}) = v_Q(x^{j-1}) = j-1$ since $v_Q(y c(x)) \geq v_Q(y) = q+1$, and $v_R(g_{ij}) = v_R( (x-\alpha)^j ) = j$ since $j < q +1 = v_R(y_\alpha)$.

\textbf{Case 3.} Suppose that $(i,j) \in \Lambda_d^-$ with $j \leq d$, so that $i = j+q$ and $2 \leq j \leq d$.
We want to find $g_{ij}$ satisfying
\[
v_P(g_{ij}) \geq - (j-1)q, \qquad
v_Q(g_{ij}) \geq j-1, \qquad
v_R(g_{ij}) = j-2 . 
\]

The proof for this case is similar to the previous one. 

Since $\ord(1-\alpha) = d > j-1$, the polynomial $a(t) = t^{j-1} - (t-\alpha)^{j-1}$ satisfies $a(1) \neq 0$.
Moreover, the polynomial 
\[
b(t) = a(t) - \frac{a(1)}{(1-\alpha)^{j-2}} (t-\alpha)^{j-2}
\]
is nonzero and has a root at 1, so there is $c(t)$ of degree $\deg b(t) - 1 \leq j-3$ such that

\[
  b(t) = t^{j-1} - (t-\alpha)^{j-1} - \frac{a(1)}{(1-\alpha)^{j-2}} (t-\alpha)^{j-2} = (t-1) c(t) .
\]

By taking $t=x$ and using \eqref{yalpha} to replace $x-1$ by $\alpha^{-q}(y-y_\alpha)$, it follows that 
\[
  x^{j-1}
\equiv (x-\alpha)^{j-1} + \frac{a(1)}{(1-\alpha)^{j-2}} (x-\alpha)^{j-2} + \alpha^{-q} y_\alpha c(x) \mod y.
\]

Let $g_{ij}$ be the right hand side of this congruence. Then 
\[
v_P(g_{ij}) \geq \min( -(j-1)q, -(q+1) + q \deg(c) ) = -(j-1)q .
\]
Moreover, $v_Q(g_{ij}) = j-1$ and $v_R(g_{ij}) = j-2$. \qedhere
\end{proof}

\begin{corollary}\label{corollary:S3}
Let $T = \{P,Q,R\}$ be a Hermitian triangle.
Then the dimension of a divisor $aP + bQ + cR$ is invariant under any permutation of $(a,b,c)$. 
In particular, the semigroup $H(P,Q,R) \subseteq \N^3$ is invariant under the action of the symmetric group $S_3$.
\end{corollary}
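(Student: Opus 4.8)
The plan is to prove the stronger statement about dimensions first, from which the semigroup assertion follows formally. Since the symmetric group $S_3$ is generated by transpositions, it suffices to show that $\dim L(aP+bQ+cR)$ is unchanged when we interchange two of the coefficients, say those of $Q$ and $R$. The main tool will be the dimension formula of Lemma \ref{dim_lemma} together with the ordering-independence of the function $\sigma_{ij}$ furnished by Theorem \ref{thm:sigma}.

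First I would compute the dimension in two ways. Using Lemma \ref{dim_lemma} with base $aP$ and the ordered pair $(Q,R)$, and recalling from the definition of $\sigma_{ij}$ that the discrepancies $aP+iQ+jR\in\Delta(Q,R)$ are exactly those with $j=\sigma_{ai}(P,Q,R)$, I obtain
\[
\dim L(aP+bQ+cR)=\#\{\, i\in\Z : i\le b,\ \sigma_{ai}(P,Q,R)\le c \,\}.
\]
Applying the same formula to the divisor $aP+cQ+bR$, but now reading off the discrepancies $aP+iR+jQ\in\Delta(R,Q)$ with the reversed pair $(R,Q)$, gives
\[
\dim L(aP+cQ+bR)=\#\{\, i\in\Z : i\le b,\ \sigma_{ai}(P,R,Q)\le c \,\}.
\]
The two right-hand sides are identical because Theorem \ref{thm:sigma} asserts $\sigma_{ai}(P,R,Q)=\sigma_{ai}(P,Q,R)$; hence $\dim L(aP+bQ+cR)=\dim L(aP+cQ+bR)$. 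Running the identical argument with the base point taken at $Q$ (pairs $(P,R)$ and $(R,P)$) and at $R$ (pairs $(P,Q)$ and $(Q,P)$) yields invariance under the other two transpositions, and since these generate $S_3$ the dimension is invariant under every permutation of $(a,b,c)$.

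To pass from dimensions to the semigroup, I would invoke Corollary \ref{corollary:basepoint_G}: a triple $(a,b,c)$ lies in $G(P,Q,R)$ precisely when $aP+bQ+cR$ has a basepoint at one of $P,Q,R$, that is, when $\dim L(aP+bQ+cR)$ coincides with one of $\dim L((a-1)P+bQ+cR)$, $\dim L(aP+(b-1)Q+cR)$, or $\dim L(aP+bQ+(c-1)R)$. Permuting $(a,b,c)$ permutes these three conditions among themselves while, by the invariance just established, preserving every dimension that appears; thus $(a,b,c)\in G(P,Q,R)$ if and only if every permutation of it does. Consequently $G(P,Q,R)$, and hence its complement $H(P,Q,R)=\N^3\setminus G(P,Q,R)$, is $S_3$-invariant.

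I expect the only delicate point to be the bookkeeping in the two dimension computations—matching the chosen base point, the ordered discrepancy pair, and the inequality constraints so that the single input $\sigma_{ai}(P,R,Q)=\sigma_{ai}(P,Q,R)$ does all the work. Everything of substance is already contained in Theorem \ref{thm:sigma}; once its ordering-independence is in hand, the present corollary is essentially a formal consequence. Alternatively, one could obtain cyclic invariance directly from the $3$-cycle automorphism of Lemma \ref{3cycle} via the isomorphism $f\mapsto f\circ\sigma$ and then supply a single transposition by the argument above, but routing everything through $\sigma_{ij}$ keeps the proof uniform.
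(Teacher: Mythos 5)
Your proof is correct and takes essentially the same route as the paper: the paper's one-line proof cites exactly the two ingredients you develop, namely the discrepancy-counting formula of Lemma \ref{dim_lemma} and the ordering-independence of $\sigma_{ij}(T)$ established in Theorem \ref{thm:sigma}. Your explicit bookkeeping of the two counts and the passage from dimensions to the gap set via Corollary \ref{corollary:basepoint_G} simply spell out what the paper leaves implicit.
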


\begin{proof}
This follows from Lemma \ref{dim_lemma} and the fact that $\sigma_{ij}(T)$ is independent of the ordering of $P,Q,R$.
\end{proof}

Since the dimensions of all divisors supported on $T$ are determined by the function $(i,j) \mapsto \sigma_{ij}(T)$, and this function is determined by the type of the triangle $T$, we have the following additional corollary of Theorem \ref{thm:sigma}.

\begin{corollary}\label{type_corollary}
Two Hermitian triangles of the same type have the same Weierstrass semigroup.
\end{corollary}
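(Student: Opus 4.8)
The plan is to show that the full Weierstrass semigroup $H(P,Q,R)$---equivalently its complement, the gap set $G(P,Q,R)$---is a function of the type $d$ alone. Since $H(P,Q,R) = \N^3 \setminus G(P,Q,R)$, it suffices to prove that two triangles of the same type have the same gap set, and by Corollary \ref{corollary:basepoint_G} membership in the gap set is already encoded entirely in the discrepancy functions $\sigma$. So the whole corollary should reduce to the order-independence and type-dependence already packaged in Theorem \ref{thm:sigma}.

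Concretely, by Corollary \ref{corollary:basepoint_G} a triple $(a,b,c) \in \N^3$ lies in $G(P,Q,R)$ precisely when one of
\[
  \sigma_{cR,P,Q}(a) > b, \qquad \sigma_{aP,Q,R}(b) > c, \qquad \sigma_{bQ,R,P}(c) > a
\]
holds. First I would rewrite each of these three order-dependent functions as an instance of the single function $(i,j) \mapsto \sigma_{ij}(T)$ defined at the start of Section \ref{sec:sigma}, using that with $B = iP$ one has $\sigma_B(j) = \sigma_{ij}$: thus $\sigma_{aP,Q,R}(b) = \sigma_{ab}(P,Q,R)$, and cyclically $\sigma_{cR,P,Q}(a) = \sigma_{ca}(R,P,Q)$ and $\sigma_{bQ,R,P}(c) = \sigma_{bc}(Q,R,P)$. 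By Theorem \ref{thm:sigma} the function $(i,j)\mapsto \sigma_{ij}(T)$ is well-defined independently of the ordering of the three points and takes values depending only on the type $d$. Consequently each of the three inequalities above is governed entirely by $d$, so $G(P,Q,R)$, and hence $H(P,Q,R)$, depends only on $d$, which is exactly the claim.

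If one prefers to argue at the level of dimensions (as the preceding remark suggests), I would instead fix $B = aP$ and apply the closed-form dimension formula \eqref{dimeq_2}, with the pair $(Q,R)$ playing the roles of $P,Q$ there, to express $\dim L(aP+bQ+cR)$ as a sum over residues $j \bmod m$ whose summands involve only $a,b,c,m$ and the values $\sigma_{aj}(T)$. Since those values depend only on $d$, the entire table of dimensions of divisors supported on $T$ depends only on $d$, and the semigroup is read off from this table. The only point requiring care in either approach is the bookkeeping that matches the three order-dependent discrepancy functions to the one function $\sigma_{ij}(T)$; this is precisely where the order-independence half of Theorem \ref{thm:sigma} is invoked. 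I do not anticipate any genuine obstacle, since all of the substantive work---computing $\sigma_{ij}(T)$ and showing it is order-independent and type-determined---has already been done in Theorem \ref{thm:sigma}, leaving the corollary essentially formal.
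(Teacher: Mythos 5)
Your proposal is correct and matches the paper's argument: the paper's proof is precisely your second variant, observing that all dimensions (equivalently, all gap/basepoint conditions) of divisors supported on $T$ are determined by the single function $(i,j) \mapsto \sigma_{ij}(T)$, which by Theorem \ref{thm:sigma} is order-independent and depends only on the type $d$. Your first variant via Corollary \ref{corollary:basepoint_G} is the same reduction phrased through the gap criterion rather than through dimensions, and is equally valid.
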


\section{Counting Weierstrass gaps} \label{section:counting}
The goal of this section is to determine the cardinality of the Weierstrass gap set $G(P,Q,R) = \N^3 \setminus H(P,Q,R)$ for any Hermitian triangle $T = \{P,Q,R\}$.
For $q>3$ we will see that this distinguishes triangles of different types, thereby yielding a proof for Theorem \ref{main_theorem}.
As a byproduct of our approach, we also obtain the cardinality of the pure gap set of any triple of rational points on the Hermitian curve. 

Let $T$ be of type $d$, and let $N(T)$ denote the size of the gap set $G(P,Q,R)$.
The set $G(P,Q,R)$ consists of all nonnegative integer triple $(a,b,c)$ such that the divisor $D = aP + bQ + cR$ has a basepoint at one of the points $P,Q,R$.
To determine when this occurs, we can use the criterion given in Proposition \ref{basepoint}, which states that $D$ has a basepoint at $Q$ if and only if $\sigma_{ab}(T) > c$, with $\sigma_{ab}$ defined as in Section \ref{sec:sigma}. 

Writing $a = i+ a_1m$, $b = j+b_1m$, and $c = k+c_1m$ with $0 \leq i,j,k < m$, the inequality $\sigma_{ab}(T) > c$ may be reexpressed using \eqref{eqn:sigma_ijm} as
\[
  a_1 + b_1 + c_1 < \frac 1m (\sigma_{ij}(T) - k) .
\]
The number of nonnegative solutions $(a_1,b_1,c_1)$ of this inequality is $\binom{s(i,j,k)+3}{3}$, where
\begin{equation}\label{sd}
  s_d(i,j,k) = \left\lfloor \frac{\sigma_{ij}(T) - k- 1}{m} \right\rfloor ,
\end{equation}
and these solutions correspond to effective divisors $D = aP + bQ + cR$ with $(a,b,c) \equiv (i,j,k) \bmod m$, and which have a basepoint at $Q$.

Since cyclically permuting $(a,b,c)$ in the criterion above replaces the basepoint $Q$ by $P$ or $R$, we conclude that the divisor $D$ realizes a Weierstrass gap if and only if 
\begin{equation}\label{gap_criterion}
  a_1 + b_1 + c_1 < \frac 1m \max \left( \sigma_{ij}(T) - k, \sigma_{ki}(T) - j, \sigma_{jk}(T) - i \right) .
\end{equation}
Therefore, for each $(i,j,k)$ with $0 \leq i,j,k < m$, the nonnegative integer solutions $(a_1,b_1,c_1)$ of \eqref{gap_criterion} correspond to Weierstrass gaps $D = aP + bQ + cR$ with $(a,b,c) \equiv (i,j,k) \bmod m$.
Counting these solutions and summing over $(i,j,k)$ yields
\[
  N(T) = \sum_{i,j,k \bmod m} \max_{\tau} \binom{s_d^\tau(i,j,k)+3}{3}, 
\]
where $\tau$ runs over cyclic permutations of the three inputs of $s_d$.

\begin{proposition}\label{NT1}
Let $T_1=\{ P, Q, R\}$ be a collinear triangle. Then the cardinality of its Weierstrass gap set is
\[
 \#G(P,Q,R)= N(T_1) = \frac{1}{24} (2q^6 - 6q^5 + 9q^4 + 12q^3 - 35q^2 + 18q) .
\]
\end{proposition}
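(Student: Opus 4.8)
The plan is to evaluate the explicit formula
\[
N(T_1) = \sum_{i,j,k \bmod m} \max_\tau \binom{s_1^\tau(i,j,k)+3}{3}
\]
derived just above the statement, where $\tau$ ranges over the three cyclic permutations and $s_1$ is given by \eqref{sd}, with the convention that $\binom{n}{3}=0$ for $n<3$. The first task is to pin down $\sigma_{ij}(T_1)$ on the residue representatives $0\le i,j\le q$. For $1\le j\le q$ and any $0\le i\le q$, the pair $(i,j)$ lies in $\Lambda$, so Theorem \ref{thm:sigma}(a) gives $\sigma_{ij}(T_1)=2g-2-jq$; for $j=0$ the pair is not in $\Lambda$, and instead I would use the quasi-periodicity \eqref{eqn:sigma_ijm} together with $(i,q+1)\in\Lambda$ to obtain $\sigma_{i0}(T_1)=(2g-2-mq)+m=-m$, using $2g-2=(q-2)m$ and $(q+1)q=mq$. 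Substituting into \eqref{sd}, a short floor computation then shows that $s_1(i,j,k)$ is independent of $i$, and for $1\le j\le q$ equals $q-2-j$ when $j>k$ and $q-3-j$ when $j\le k$, while $s_1(i,0,k)=-2$.

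Next I would analyze the maximum $M(i,j,k)$ of the three cyclic terms $s_1(i,j,k)$, $s_1(k,i,j)$, $s_1(j,k,i)$, which by the previous step depend respectively on the ordered pairs $(j,k)$, $(i,j)$, $(k,i)$ — the directed edges of the cycle $i\to j\to k\to i$, each contributing a value of the form (roughly) $q-2-(\text{tail})$ with an extra penalty of $1$ precisely when tail $\le$ head. The key observation is that on the interior, i.e.\ when $i,j,k\ge 1$, the edge whose tail attains $\min(i,j,k)$ is necessarily penalized (its head is at least its tail) and dominates the others, yielding the clean identity
\[
M(i,j,k)=q-3-\min(i,j,k),
\]
so that the interior summand is $\binom{q-\min(i,j,k)}{3}$. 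When one or more coordinates vanish the tail-$0$ edge collapses to the constant $-2$, and the maximum is instead governed by the surviving edges; I would record the resulting explicit values of $M$ on these boundary strata. For instance, on the stratum $i=0$, $j,k\ge 1$ one finds $M=q-2-\min(j,k)$ when $j\ge k$ and $M=q-3-j$ when $j<k$, and on a stratum with two vanishing coordinates (say $i=j=0$, $k\ge 1$) one finds $M=q-2-k$.

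Finally I would evaluate the sum by partitioning the cube $\{0,\dots,q\}^3$ according to the number of vanishing coordinates, using the full cyclic symmetry of the expression so that the three one-zero strata contribute equally, as do the three two-zero strata (the all-zero point contributes $0$). On the interior, grouping by the value $\mu=\min(i,j,k)$ and using that exactly $(q-\mu+1)^3-(q-\mu)^3$ triples have a given minimum reduces the contribution to $\sum_{r=0}^{q-1}(3r^2+3r+1)\binom{r}{3}$, a routine polynomial sum. The boundary strata reduce similarly to one- and two-variable sums of binomial coefficients, handled by the hockey-stick identity. Adding the pieces and simplifying should produce the stated degree-six polynomial; as a consistency check, the interior term already supplies the leading coefficient $\tfrac{2}{24}q^6=\tfrac{1}{12}q^6$, with the boundary strata contributing only to degrees $\le 5$. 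I expect the main obstacle to be precisely the boundary bookkeeping: correctly determining $M$ when coordinates vanish or tie, where the simple formula $q-3-\min$ fails and one must track the cyclic asymmetry introduced by the $j>k$ versus $j\le k$ penalty. Once the value of $M$ is settled on every stratum, the summations and the final polynomial simplification are mechanical.
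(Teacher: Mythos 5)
Your overall strategy (evaluate the residue-class sum by identifying the cyclic maximum of $s_1$ and then summing binomial coefficients) is the same as the paper's, but the values you feed into it are wrong, and the error is fatal. The domain $\Lambda$ in Theorem~\ref{thm:sigma} is the bounded parallelogram $\{(i,j) : 1 \le j \le q+1,\ j \le i \le j+q\}$ --- one representative of each class modulo $(m,m)$, as Figure~\ref{fig_gamma} shows --- not the unbounded region $\{(i,j): j\ge 1,\ i \le j+q\}$ that you use. Indeed, the formula $\sigma_{ij}(T_1) = 2g-2-jq$ cannot hold on any set containing two points congruent modulo $(m,m)$: by \eqref{eqn:sigma_ijm} one would need $\sigma_{i+m,j+m} = \sigma_{ij}-2m$, while the formula gives $\sigma_{ij} - mq$. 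Consequently, for residues with $0 \le i < j \le q$ the pair $(i,j)$ is \emph{not} in $\Lambda$; one must descend from $(i+m,j) \in \Lambda$, which picks up an extra $+m$, and for $j = 0$ one descends from $(i+m, q+1) \in \Lambda$, giving $\sigma_{i0}(T_1) = 2g-2-mq+2m = 0$, not $-m$. (Sanity checks: $\sigma_{00}=0$ because the zero divisor is trivially a discrepancy; and for $q=3$ a direct Riemann--Roch computation on the collinear triangle gives $\sigma_{01}(T_1)=5=2g-2-q+m$, not $1$.) The correct values are $\sigma_{ij}(T_1) = 2g-2-jq+m\delta_{j>i}$ for $1\le j\le q$, whence $s_1(i,j,k) = q-3-j+\delta_{j>i}+\delta_{j>k}$; contrary to your claim, $s_1$ genuinely depends on $i$.

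This sinks the rest of the argument. Your ``clean identity'' $M(i,j,k) = q-3-\min(i,j,k)$ on the interior fails exactly on triples of the form $(j,j,j+1)$ and their rotations, $j \ge 1$: there the cyclic shift $(j,j+1,j)$ gives $s_1 = q-3-(j+1)+\delta_{j+1>j}+\delta_{j+1>j} = q-2-j$, strictly exceeding $q-3-j$. Your boundary values are likewise too small: on the stratum $i=0$, $1 \le j < k$ the correct maximum is $q-2-j$ (not $q-3-j$), and on the two-zero stratum $(0,0,k)$ it is $q-1-k$ (not $q-2-k$). Since every one of your errors underestimates the maximum and $s \mapsto \binom{s+3}{3}$ is nondecreasing, carrying out your summation faithfully would produce a total falling short of the stated polynomial by a positive quantity of degree four in $q$ (about $3\binom{q}{3}$ from the interior triples alone, plus larger boundary deficits); no cancellation is possible. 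The paper's proof follows the same plan but carries the $\delta_{j>i}$ term throughout, which is precisely what drives its four-case analysis ($i=j=k$, $i=j<k$, $i<j=k$, $i<j<k$, with sub-cases for zero coordinates and ties) --- the bookkeeping you correctly identified as the crux, but then got wrong.
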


\begin{proof}

We compute $N(T_1)$ as the sum of contributions corresponding to triples $(i,j,k)$ with $0 \leq i,j,k < m$, as described above.
Because the Weierstrass semigroup is invariant under the action of $S_3$, we work under the assumption that $i \leq j \leq k$, and weight the resulting contributions by the number of distinct permutations of each triple $(i,j,k)$.

We begin by simplifying the expression
\[
  s_1(i,j,k) = \left\lfloor \frac{\sigma_{ij}(T_1) - k- 1}{m} \right\rfloor .
\]
By \eqref{eqn:sigma_ijm} and \eqref{eqn:sigma}, the values values of $\sigma_{ij}(T_1)$ for $0 \leq i,j \leq m-1$ are given by
\begin{equation}\label{eqn:sigma_piecewise}
\sigma_{ij}(T_1)
= \begin{cases}
0,  & j = 0,  \\
2g-2-jq+m\delta_{j > i},  & 0 < j \leq q ,
\end{cases}
\end{equation}
where $\delta_{j > i} =1$ if $j > i$ and is $\delta_{j > i}=0$ otherwise.
If $j = 0$, then $\sigma_{ij}(T_1) = 0$, so that $s_1(i,j,k) = -1$. 
A negative value here means that $(i,j,k)$ makes no contribution to $N(T_1)$.
If $j > 0$, then by \eqref{eqn:sigma_piecewise} and the fact that $-m < j-k-1 \leq m-2$, we have
\[
s_1(i,j,k) =
q-2 - j + \delta_{j>i} + \left\lfloor \frac{j-k-1}{m}\right\rfloor 
= q-3-j+ \delta_{j>i} + \delta_{j > k} .
\]
We break the remainder of the proof into four cases, depending on whether or not the the inequalities $i \leq j$ and $j \leq k$ are strict.

First assume that $i=j=k$. Then either $j=0$ and $s_1(j,j,j) = -1$, or $j>0$ and $s_1(j,j,j) = q - 3 - j$. The total contribution of triples of this form is therefore
\[
\sum_{j=0}^q \binom{s_1(j,j,j)+3}{3}
= \sum_{j=1}^q \binom{q-j}{3}
= \binom{q}{4}.
\]

Now assume that $i=j<k$. Then
\[
s_1(j,j,k) = s_1(k,j,j) 
= \begin{cases}
-1, & j = 0,\\
q-3-j, & j > 0,
\end{cases}
\]
while in in either case, $s_1(j,k,j) = q-1-k$.
It follows that
\[
\max_\tau s_1^\tau(j,j,k) = 
\begin{cases}
q-1-k, &  j = 0 \text{ or } k = j+1, \\
q-3-j, & j > 0 \text{ and } k > j+1 .
\end{cases}
\]
These triples have three distinct permutations, so their total weighted contribution is 3 times
\begin{multline*}
  \sum_{k=1}^q \binom{q+2-k}{3} 
+ \sum_{k=2}^q \binom{q+2-k}{3} 
+ \sum_{j=1}^{q-2} (q-j-1) \binom{q-j}{3} \\
= \frac{1}{30} (q^5 - 5 q^4 + 20 q^3 - 25 q^2 + 9 q) .
\end{multline*}

Now assume that $i < j = k$.
Then $s_1(j,i,j) = -1$ if $i = 0$ and otherwise
\[
s_1(j,i,j) = q-3-i \geq q-2-j = s_1(i,j,j) = s_1(j,j,i) .
\]
It follows that
\[
\max_\tau s_1^\tau(i,j,j) = 
\begin{cases}
\max(-1, q-2-j), & i = 0 , \\
q-3-i, & i > 0 . 
\end{cases}
\]
These triples have three distinct permutations, so their total weighted contribution is 3 times
\[
  \sum_{j=1}^{q} \binom{q+1-j}{3}
+ \sum_{i=1}^{q-1} (q-i) \binom{q-i}{3}
= \frac{1}{30}(q^5 - 5q^4 + 15q^3 - 10q^2 - q) .
\]

Finally, assume that $i < j < k$.
If $i = 0$, then $s_1(k,i,j) = -1$, and 
\[
  \max_\tau s_1^\tau(i,j,k)
= \max(q-1-k, q-2-j, -1)
= q-2-j .
\]
On the other hand, if $i > 0$, then 
\[
  \max_\tau s_1^\tau(i,j,k)
= \max(q-1-k, q-2-j, q-3-i)
= q-3-i .
\]
These triples have six distinct permutations, so their total weighted contribution is 6 times
\begin{multline*}
\sum_{j=1}^{q-1} (q-j) \binom{q+1-j}{3}
+ \sum_{i=1}^{q-2} \binom{q-i}{2} \binom{q-i}{3} \\
= \frac{1}{360} (5q^6 - 27q^5 + 80q^4 - 135q^3 + 95q^2 - 18q) .
\end{multline*}

Adding up these weighted contributions gives the desired formula for $N(T_1)$. \qedhere
\end{proof}

Before we count the Weierstrass gaps for a triangle $T_d$ of type $d \neq 1$, we examine the situations in which the quantities $s_1(i,j,k)$ and $s_d(i,j,k)$ may differ for a given triple $(i,j,k)$.

\begin{lemma}\label{s1sd}
Let $0 \leq i,j,k < m$ and let $d$ be a divisor of $q+1$. Then $s_1(i,j,k) \neq s_d(i,j,k)$ only in one of the following three cases:
\begin{enumerate}
\item $(i,j,k) = (q,0,q)$, in which case $s_1(q,0,q) = -1$ and $s_d(q,0,q) = -2$;
\item $(i,j,k) = (i, i+1, i)$ with $i \not\equiv 0 \bmod d$, in which case $s_d(i,j,k) = s_1(i,j,k) - 1$;
\item $(i,j,k) = (i,i,i)$ with $i \not\equiv 0 \bmod d$, in which case $s_d(i,j,k) = s_1(i,j,k) + 1$.
\end{enumerate}
\end{lemma}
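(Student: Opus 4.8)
The plan is to trace how the $\pm 1$ changes to $\sigma_{ij}$ recorded in Theorem \ref{thm:sigma} propagate through the floor function in the definition \eqref{sd} of $s_d$. Since $s_d(i,j,k) = \lfloor (\sigma_{ij}(T_d) - k - 1)/m \rfloor$ depends on the triangle only through $\sigma_{ij}$, a necessary condition for $s_1(i,j,k) \neq s_d(i,j,k)$ is that $\sigma_{ij}(T_1) \neq \sigma_{ij}(T_d)$. By Theorem \ref{thm:sigma} this forces $i \not\equiv 0 \bmod d$ together with either $j \equiv i \bmod m$ (giving $\sigma_{ij}(T_d) = \sigma_{ij}(T_1) + 1$) or $j \equiv i+1 \bmod m$ (giving $\sigma_{ij}(T_d) = \sigma_{ij}(T_1) - 1$). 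As $0 \leq i,j < m$, the first alternative means $j = i$, while the second means $j = i+1$ when $i < q$ and $j = 0$ when $i = q$. So only these $(i,j)$ need to be examined, and for each I must determine for which $k$ the floor values actually move.

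Next I would compute $\sigma_{ij}(T_1) \bmod m$. Using $2g-2 = (q-2)m \equiv 0 \bmod m$ and $q \equiv -1 \bmod m$, the piecewise description \eqref{eqn:sigma_piecewise} yields $\sigma_{ij}(T_1) \equiv j \bmod m$ for every $0 \leq j < m$, the value $j = 0$ being covered since then $\sigma_{ij}(T_1) = 0$. With this congruence, comparing $s_1$ and $s_d$ reduces to comparing $\lfloor x/m \rfloor$ with $\lfloor (x \pm 1)/m \rfloor$ for $x = \sigma_{ij}(T_1) - k - 1$. These floors differ precisely when $x + 1 \equiv 0 \bmod m$ in the $+1$ case and when $x \equiv 0 \bmod m$ in the $-1$ case, that is, precisely when $\sigma_{ij}(T_1) \equiv k$ or $\sigma_{ij}(T_1) \equiv k+1 \bmod m$ respectively. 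Since $\sigma_{ij}(T_1) \equiv j$, this pins $k$ down uniquely: $k = j$ in the first alternative, and $k \equiv j-1 \bmod m$ in the second, with the floors then shifting by $+1$ and $-1$ respectively.

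Assembling the cases gives the statement. The alternative $j = i$, $k = i$ produces the triple $(i,i,i)$ with $s_d = s_1 + 1$, namely case (3). The alternative $j = i+1$, $k = i$ with $i < q$ produces $(i, i+1, i)$ with $s_d = s_1 - 1$, namely case (2). The remaining possibility is $i = q$, which forces $j = 0$ and $k = q$, giving $(q,0,q)$; here a direct evaluation from $\sigma_{q,0}(T_1) = 0$ gives $s_1 = \lfloor -m/m \rfloor = -1$ and $s_d = \lfloor -(m+1)/m \rfloor = -2$, which is case (1). I expect the one delicate point to be the bookkeeping at $i = q$: there the reduction $j \equiv i+1 \bmod m$ sends $j$ to $0$, where \eqref{eqn:sigma_piecewise} switches branches to $\sigma_{ij}(T_1) = 0$, so this boundary instance must be separated out rather than absorbed into the generic family $(i, i+1, i)$ -- which is exactly why case (1) is recorded on its own. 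One should also note that the defining condition $i \not\equiv 0 \bmod d$ holds at $i = q$ precisely when $d > 1$ (as $d \mid q+1$ and $\gcd(q,q+1) = 1$), consistent with the fact that $s_1 = s_d$ identically when $d = 1$.
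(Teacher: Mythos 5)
Your proof is correct and follows essentially the same route as the paper's: both trace the $\pm 1$ perturbation $\epsilon_{ij}$ from Theorem \ref{thm:sigma} through the floor in \eqref{sd}, observing that the floor can move only when its argument sits at a multiple of $m$, which pins down $k$ and yields exactly the three listed triples with the stated shifts. The only difference is organizational: the paper splits on $j=0$ versus $j\neq 0$ and works from the explicit formula \eqref{eqn:sigma_piecewise}, while you use the uniform congruence $\sigma_{ij}(T_1) \equiv j \bmod m$ and recover $(q,0,q)$ as the $i=q$ boundary case of the $j \equiv i+1 \bmod m$ family.
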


\begin{proof}

Recall from Theorem \ref{thm:sigma} that $\sigma_{ij}(T_d) = \sigma_{ij}(T_1) + \epsilon_{ij}$, where
\[
\epsilon_{ij} = 
\begin{cases}
    \phantom{-}1 & i\not\equiv 0 \bmod d \text{ and } j\equiv i \bmod m, \\
    -1 & i\not\equiv 0 \bmod d \text{ and } j\equiv i+1 \bmod m, \\
    \phantom{-}0 & \text{otherwise.}
\end{cases}
\]

If $j = 0$, then $\sigma_{ij}(T_1) = 0$, and 
\[
  \sigma_{ij}(T_d) 
= \epsilon_{ij}
= \begin{cases}
-1, & i = q \text{ and } d > 1, \\
\phantom{-}0, & \text{otherwise}.
\end{cases} 
\]
It follows that $s_1(i,0,k)$ and $s_d(i,0,k)$ are both negative.
In particular, $s_1(i,0,k) = -1$ and 
\[
s_d(i,0,k) = 
\begin{cases}
-2, & (i,j,k) = (q,0,q), \\
-1, & \text{otherwise}.
\end{cases}
\]

Assume now that $j \neq 0$, so that $\sigma_{ij}(T_d) = 2g-2 - jq + m\delta_{j > i} + \epsilon_{ij}$.
Then
\[
  s_d(i,j,k) 
= q-2 - j + \delta_{j > i} + \left\lfloor \frac{j - k - 1 + \epsilon_{ij}}{m}  \right\rfloor .
\]
Therefore, $s_d(i,j,k) \neq s_1(i,j,k)$ only if
\[
\left\lfloor \frac{j - k - 1}{m}  \right\rfloor
\neq \left\lfloor \frac{j - k - 1 + \epsilon_{ij}}{m}  \right\rfloor .
\]

Because $0\leq i,j,k < m$ and $j \neq 0$, there are only two scenarios in which this last condition occurs, namely when $j = k+1$ and $\epsilon_{ij} = -1$, and when $j = k$ and $\epsilon_{ij} = +1$.
In the first case, we have $(i,j,k) = (i,i+1,i)$ with $i \not\equiv 0 \bmod d$, and $s_d(i,j,k) = s_1(i,j,k) - 1$.
In the second case, we have $(i,j,k) = (i,i,i)$, with $i \not\equiv 0 \bmod d$, and $s_d(i,j,k) = s_1(i,j,k) + 1$. \qedhere
\end{proof}

\begin{lemma}\label{maxperm}
Let $d$ be a divisor of $q+1$, and let $0\leq i,j,k < m$.
Then there is a cyclic permutation of $(i,j,k)$ which simultaneously maximizes the quantities $s_1(i,j,k)$ and $s_d(i,j,k)$.
\end{lemma}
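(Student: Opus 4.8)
The goal is to show that for any triple $(i,j,k)$ with $0 \leq i,j,k < m$, a \emph{single} cyclic permutation of the inputs simultaneously maximizes both $s_1$ and $s_d$. The plan is to compare the two functions using the explicit description in Lemma \ref{s1sd}, which pins down exactly the triples where $s_d$ and $s_1$ disagree. The overall strategy is: first understand which cyclic permutation maximizes $s_1$, and then check that at the few problematic triples the perturbation $s_d - s_1 \in \{-1,+1\}$ does not move the location of the maximum to a different cyclic permutation.

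First I would record the formula for $s_1(i,j,k)$ derived in the proof of Proposition \ref{NT1}, namely $s_1(i,j,k) = -1$ when $j = 0$ and $s_1(i,j,k) = q-3-j+\delta_{j>i}+\delta_{j>k}$ when $0 < j \leq q$. From this one reads off that, among the three cyclic shifts, $s_1$ is largest at whichever shift places the \emph{smallest} of the three coordinates in the middle slot $j$ (since the dominant term is $-j$, with the $\delta$ corrections only raising the value when the middle coordinate strictly exceeds its neighbors). I would make this precise by fixing a cyclic shift $\tau_0$ that puts a minimal coordinate in the $j$-position and arguing that $s_1^{\tau_0} \geq s_1^{\tau}$ for the other two shifts $\tau$, handling the $\delta$ terms carefully since ties among coordinates affect them. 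This identifies an $s_1$-maximizing cyclic permutation.

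Next I would invoke Lemma \ref{s1sd} to locate where $s_d$ could differ from $s_1$: only at $(q,0,q)$, at $(i,i+1,i)$ with $i \not\equiv 0 \bmod d$, and at $(i,i,i)$ with $i \not\equiv 0 \bmod d$, together with their cyclic shifts. In each of these cases the triple is highly symmetric, so I would check directly that the $s_1$-maximizing shift $\tau_0$ also maximizes $s_d$. For the diagonal triple $(i,i,i)$ all three cyclic shifts agree, so the correction $+1$ is applied uniformly and the maximizer is unchanged; similarly all three shifts of $(q,0,q)$ reduce to permutations of the same multiset, and the correction $-2$ at the single relevant shift must be checked not to create a new maximizer. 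The genuinely delicate case is $(i,i+1,i)$ and its shifts, where the correction $-1$ applies to one specific cyclic arrangement: here I would verify that this $-1$ lands on a shift that was \emph{not} the unique $s_1$-maximizer, or else that even after the decrement the $s_1$-maximizing shift remains a joint maximizer.

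The main obstacle I anticipate is the bookkeeping in this last case: one must compute $s_1$ and $s_d$ at all three cyclic shifts of $(i,i+1,i)$ and confirm that the shift which decreases is not the one achieving the $s_1$-maximum, so that $\max_\tau s_d^\tau$ is still attained at the same $\tau_0$ that attains $\max_\tau s_1^\tau$. Because $i \equiv i \not\equiv i+1 \bmod m$, the three coordinates are $\{i, i+1, i\}$, so the minimal coordinate $i$ sits in two of the slots, and the $s_1$-maximizer puts $i$ (not $i+1$) in the middle; I would check that the $-1$ correction from Lemma \ref{s1sd}, which attaches to the arrangement having $j = i+1$, hits the \emph{non}-maximizing shift, leaving $\tau_0$ as a simultaneous maximizer of both functions. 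Once this is verified, combining the three cases completes the proof.
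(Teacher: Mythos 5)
Your overall strategy (locate the $s_1$-maximizing cyclic shift, then use Lemma \ref{s1sd} to check that the $\pm 1$ perturbations cannot relocate the maximum) could be made to work, but two of your concrete claims are false, and the second sits exactly at the crux of the argument. First, your heuristic that $s_1$ is maximized by the shift placing the \emph{smallest} coordinate in the middle slot fails: when the smallest coordinate is $0$, that shift has $s_1 = -1$ by the $j=0$ clause of the formula (so, e.g., for $(0,1,5)$ the maximizer puts $1$ in the middle), and even for positive coordinates the two $\delta$-terms overturn it in the one case that matters. Indeed, for the triple $(i,i+1,i)$ with $1 \leq i$ one computes
\[
s_1(i,i+1,i) = q-3-(i+1)+\delta_{i+1>i}+\delta_{i+1>i} = q-2-i,
\qquad
s_1(i+1,i,i) = s_1(i,i,i+1) = q-3-i ,
\]
so the \emph{unique} $s_1$-maximizer is the arrangement with the larger coordinate $i+1$ in the middle --- the opposite of what you assert. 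Consequently, the claim in your final paragraph that the $-1$ correction of Lemma \ref{s1sd}, which attaches to the arrangement with $j=i+1$, ``hits the non-maximizing shift'' is wrong: it hits precisely the unique $s_1$-maximizing shift. The lemma survives only through the fallback you mention but never carry out: after the correction, $s_d(i,i+1,i) = q-3-i$ ties with $s_d$ at the other two shifts, so the $s_1$-maximizer remains a \emph{joint} maximizer of $s_d$. (The same thing happens in the $(q,0,q)$ case: the corrected shift is again the unique $s_1$-maximizer, and it ties at $-2$ with the other two shifts of $s_d$.) As written, your proof commits to the false branch of the case analysis, so it has a genuine gap, albeit a repairable one.

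For comparison, the paper's proof avoids locating maximizers altogether. It fixes a cyclic shift maximizing $s_1$, breaking ties by $s_d$, and supposes some other shift had strictly larger $s_d$. Since $s_1$ and $s_d$ differ by at most $1$ pointwise, this forces $s_d = s_1 - 1$ at the chosen shift and $s_d = s_1 + 1$ at the other; by Lemma \ref{s1sd} the latter means the other shift is a diagonal triple $(i',i',i')$, whose every cyclic shift is itself, contradicting that the chosen shift has the form $(i,i+1,i)$. That extremal, tie-breaking argument sidesteps exactly the bookkeeping where your proposal goes astray, and is the shape of argument you should adopt if you repair your write-up.
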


\begin{proof}

We assume, without loss of generality, that $(i,j,k)$ satisfies $s_1(i,j,k) \geq s_1(i',j',k')$ for any any cyclic permutation $(i',j',k')$ of $(i,j,k)$, and furthermore that $s_d(i,j,k) \geq s_d(i',j',k')$ whenever $s_1(i,j,k) = s_1(i',j',k')$.
Unless $(i,j,k) = (0,0,0)$, this implies that $j \neq 0$.

Suppose that $(i',j',k')$ is a cyclic permutation of $(i,j,k)$ satisfying $s_d(i',j',k') > s_d(i,j,k)$.
Then according to the assumption on $(i,j,k)$, we must have $s_1(i',j',k') < s_1(i,j,k)$.
Since $s_1(i,j,k)$ and $s_d(i,j,k)$ may differ by at most 1, it follows that $s_d(i,j,k) = s_1(i,j,k) - 1$ and $s_d(i',j',k') = s_1(i',j',k') + 1$. However,  by Lemma \ref{s1sd}, this implies that $(i,j,k) = (i,i+1,i)$ and $(i',j',k') = (i',i',i')$, a contradiction. \qedhere

\end{proof}

\begin{proposition}\label{NTd}
Let $T_d=\{ P, Q, R \}$ be a triangle of type $d$. Then the cardinality of its Weierstrass gap set is
\[
 \#G(P,Q,R)= N(T_d) = N(T_1) - \frac{d-1}{6d} (q+1)(2q^2 - 5q + 5-d).
\]
\end{proposition}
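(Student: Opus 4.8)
The plan is to compute $N(T_d)$ by comparing it termwise with the already-established value $N(T_1)$ from Proposition \ref{NT1}. Recall that both quantities are expressed as sums over residue triples $(i,j,k)$ modulo $m$ of the binomial $\binom{s+3}{3}$, where $s$ is the maximum of the relevant $s_d^\tau$ over cyclic permutations $\tau$. By Lemma \ref{maxperm}, a single cyclic permutation simultaneously maximizes both $s_1$ and $s_d$, so I may fix, for each orbit of $(i,j,k)$ under cyclic rotation, one representative achieving both maxima and write
\[
N(T_d) - N(T_1) = \sum_{i,j,k \bmod m} \left( \binom{\max_\tau s_d^\tau + 3}{3} - \binom{\max_\tau s_1^\tau + 3}{3} \right).
\]
The summand vanishes unless the maximizing representative is one of the exceptional triples identified in Lemma \ref{s1sd}, so the entire difference collapses to a sum over these few families.

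First I would dispose of the case $(i,j,k)=(q,0,q)$ from Lemma \ref{s1sd}(1): here $s_1 = -1$ and $s_d = -2$, but $\binom{-1+3}{3} = \binom{2}{3} = 0 = \binom{-2+3}{3} = \binom{1}{3}$, so this triple contributes nothing. Next I would handle the families $(i,i+1,i)$ and $(i,i,i)$ with $i \not\equiv 0 \bmod d$ from parts (2) and (3). For each such $i$, I must first confirm that the listed representative is in fact the cyclic maximizer (so that the change in $s$ actually propagates into the max), using Lemma \ref{maxperm} together with the explicit formula for $s_1$ from the proof of Proposition \ref{NT1}; then I would record the binomial difference $\binom{s_d + 3}{3} - \binom{s_1 + 3}{3}$ with $s_d = s_1 \mp 1$ and multiply by the number of distinct cyclic permutations of the representative.

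The counting then reduces to summing these differences over $1 \leq i \leq m-1$ with $i \not\equiv 0 \bmod d$. Writing $s_1(i,i,i) = q-3-i$ and $s_1(i,i+1,i)$ from the formula in Proposition \ref{NT1}, each contribution becomes a polynomial in $i$, and the constraint $i \not\equiv 0 \bmod d$ means I sum over $i$ in a complete residue system modulo $m=q+1$ while deleting the $\lfloor (q+1)/d \rfloor$ multiples of $d$. The residue condition is what introduces the factor $(d-1)/d$ into the final answer: summing a polynomial over all residues and subtracting the multiples of $d$ produces exactly the $\frac{d-1}{6d}$ prefactor. I expect the main obstacle to be purely bookkeeping: correctly matching each exceptional triple to its cyclic-permutation count, getting the signs of $\binom{s\pm1+3}{3}-\binom{s+3}{3}$ right, and carefully evaluating the truncated arithmetic sums so that the $(q+1)(2q^2-5q+5-d)$ factor emerges cleanly. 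The conceptual work is entirely contained in Lemmas \ref{s1sd} and \ref{maxperm}; the remainder is a finite, if delicate, polynomial summation that I would verify against small cases such as $(q,d)=(5,3)$ from Figure \ref{fig_gamma}.
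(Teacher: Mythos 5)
Your proposal is correct and follows essentially the same route as the paper's proof: compare $N(T_d)$ with $N(T_1)$ termwise, use Lemma \ref{maxperm} to choose a cyclic representative maximizing both $s_1$ and $s_d$, invoke Lemma \ref{s1sd} to reduce to the exceptional families $(i,i+1,i)$ and $(i,i,i)$ with $i\not\equiv 0 \bmod d$, weight by orbit size (3 and 1 respectively), and evaluate the resulting truncated sums of $\binom{q-i}{2}$ to produce the $\frac{d-1}{6d}$ factor. Your explicit check that the triple $(q,0,q)$ contributes nothing (since $\binom{2}{3}=\binom{1}{3}=0$) is a point the paper passes over silently, and is a welcome addition.
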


\begin{proof}

For any divisor $t$ of $q+1$, let 
\[
  c_{t}(i,j,k) = \max_{\tau} \binom{s_{t}^\tau(i,j,k)+3}{3} .
\]
To obtain $N(T_d) - N(T_1)$, it will suffice to sum the differences $c_d(i,j,k) - c_1(i,j,k)$.
We will sum these differences over orbits of triples $(i,j,k)$ under cyclic permutation, weighted by orbit size.

Let $(i,j,k)$ satisfy $0 \leq i,j,k < m$.
By Lemma \ref{maxperm}, we assume that $(i,j,k)$ simultaneously maximizes $s_1(i,j,k)$ and $s_d(i,j,k)$ with respect to cyclic permutations, so that 
\[
  c_{t}(i,j,k) = \binom{s_{t}(i,j,k)+3}{3} , 
\qquad t = 1,d.
\]
Then by Lemma \ref{s1sd}, we have $c_d(i,j,k) - c_1(i,j,k) = 0$ unless $(i,j,k) = (i,i+1,i)$ or $(i,j,k) = (i,i,i)$ with $i \not\equiv 0 \bmod d$.
In the first case, we have
\[
  c_d(i,i+1,i) - c_1(i,i+1,i) = \binom{q-i}{3} - \binom{q+1-i}{3} = - \binom{q-i}{2} .
\]
For each $1\leq i \leq q-1$ with $i \not\equiv 0 \bmod d$, there three triples in each of these orbits which yield this same contribution.

In the second case, we have
\[
  c_d(i,i,i) - c_1(i,i,i) = \binom{q+1-i}{3} - \binom{q-i}{3} = \binom{q-i}{2} .
\]
There is a single contribution of this type for each $1\leq i \leq q$ with $i \not\equiv 0 \bmod d$.

It follows that 
\[
  N(T_d) - N(T_1)
= \sum_{\substack{0 \leq i \leq q \\ i\not\equiv 0\bmod d} } \left\{ \binom{q-i}{2} - 3 \binom{q-i}{2} \right\} 
= -2 \sum_{\substack{0 \leq i \leq q \\ i\not\equiv 0\bmod d} } \binom{q-i}{2} .
\]
To complete the proof, we calculate
\begin{align*}
\sum_{\substack{0 \leq i \leq q \\ i\not\equiv 0\bmod d} } \binom{q-i}{2}
&= \sum_{i=0}^q {{q-i}\choose{2}} - \sum_{j=0}^{(q+1)/d-1}{{q-jd}\choose{2}} \\
&= \frac 16(q^3-q) - \frac{1}{12d} (q+1) (2q^2-5q+5+3dq-6d+d^2)\\
&= \frac{d-1}{12d} (q+1)(2q^2 - 5q + 5-d) . \qedhere
\end{align*}

\end{proof}

We now restate and prove the main theorem.

\begin{prevtheorem}
For $q > 3$, the number of distinct Weierstrass semigroups of triples of rational points on the Hermitian curve over $\F_{q^2}$ is equal to the number of positive divisors of $q+1$.
For $q=2$, there is a single three-point semigroup, and for $q=3$, there are two.
\end{prevtheorem}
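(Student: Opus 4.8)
The plan is to reduce the statement to a counting problem already almost solved by Propositions \ref{NT1} and \ref{NTd}. By Corollary \ref{type_corollary}, two triangles of the same type share a Weierstrass semigroup, and there is a triangle of type $d$ for each divisor $d$ of $q+1$. So the number of distinct semigroups is \emph{at most} the number of divisors of $q+1$. To get equality for $q>3$, it suffices to show that distinct types yield distinct semigroups; since the gap-set cardinality $N(T_d)$ is a semigroup invariant, it is enough to prove that $d \mapsto N(T_d)$ is injective on the divisors of $q+1$.

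First I would invoke Proposition \ref{NTd}, which gives
\[
N(T_1) - N(T_d) = \frac{d-1}{6d}(q+1)(2q^2 - 5q + 5 - d).
\]
Write $\Phi(d)$ for the right-hand side; injectivity of $N(T_d)$ is equivalent to injectivity of $\Phi$ on divisors of $q+1$. The main step is therefore to show that $\Phi(d_1) = \Phi(d_2)$ forces $d_1 = d_2$ when $d_1, d_2 \mid q+1$ and $q>3$. I would analyze $\Phi$ as a function of the real variable $d$: expanding, $\Phi(d) = \frac{q+1}{6}\bigl((2q^2-5q+5)(1 - 1/d) - (d-1)\bigr)$, and I would check that $\Phi$ is strictly increasing in $d$ on the relevant range $1 \le d \le q+1$. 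Its derivative in $d$ has sign governed by $(2q^2-5q+5)/d^2 - 1$, which is positive precisely when $d^2 < 2q^2 - 5q + 5$; since $d \le q+1$ and $(q+1)^2 < 2q^2 - 5q + 5$ holds for all $q \ge 8$ (and the small cases $q \in \{4,5,7\}$ with $q+1$ composite can be checked by hand, noting $q=6$ is not a prime power), $\Phi$ is strictly monotone and hence injective on the divisors. This establishes that the $\lceil q/2\rceil + 1$ automorphism classes collapse to exactly $\tau(q+1)$ semigroups.

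For the two exceptional small cases I would argue directly. When $q=2$ we have $q+1=3$, genus $g=1$, and the only divisors of $3$ are $1$ and $3$; here $\Phi$ need not be injective, and indeed a direct computation (or the degeneration of the formulas, since the hypotheses of some lemmas require $q>3$) shows both types give the same semigroup, so there is a single three-point semigroup. When $q=3$ we have $q+1=4$ with divisors $1,2,4$; I would compute the three values $N(T_1), N(T_2), N(T_4)$ explicitly and observe that exactly two distinct values arise, giving two semigroups. These low-genus cases are small enough to verify by hand or by listing gaps directly.

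The main obstacle is the monotonicity argument for $\Phi$: the factor $(2q^2 - 5q + 5 - d)$ is decreasing in $d$ while $(d-1)/d$ is increasing, so the product is not obviously monotone, and care is needed at the boundary $d = q+1$ where the inequality $(q+1)^2 < 2q^2-5q+5$ can fail for small $q$. The cleanest route is to treat $\Phi$ as a smooth function on $[1, q+1]$, locate its unique critical point at $d = \sqrt{2q^2-5q+5}$, and confirm that for the prime powers $q>3$ with $q+1$ composite this critical point exceeds $q+1$, so $\Phi$ is increasing throughout and no two divisors can collide. I would double-check the handful of borderline prime powers ($q=4,5,7$, and note $q+1$ prime when $q$ is even forces only the trivial factorization) to ensure no accidental coincidence, completing the proof that the count equals $\tau(q+1)$.
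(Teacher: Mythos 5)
Your argument for the main case $q>3$ is sound and structurally the same as the paper's: both proofs bound the number of semigroups above by the number of types via Corollary \ref{type_corollary}, and then distinguish types by the gap-count invariant of Proposition \ref{NTd}. The difference is only in how injectivity of $d \mapsto N(T_d)$ is established. The paper clears denominators in $h_d(q)=h_e(q)$ and finds that a collision between distinct divisors $d,e$ forces the exact identity $de = 2q^2-5q+5$, which is then ruled out uniformly for all $q>3$ by $de \le \tfrac12 (q+1)^2 < 2q^2-5q+5$. Your route instead treats $\Phi$ as a real function and uses monotonicity; this is valid but strictly weaker, since monotonicity on $[1,q+1]$ holds only for $q\ge 7$ (your caution about $q=7$ is unnecessary: $(q+1)^2 = 64 < 68$), forcing you into ad hoc verification for $q\in\{4,5\}$, which you defer rather than carry out. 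The paper's factorization is cleaner: it handles all $q>3$ at once and explains precisely when collisions can occur --- indeed the excluded case $q=3$ is exactly a solution of $de=2q^2-5q+5$, namely $2\cdot 4 = 8$.

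The genuine gap is in your $q=3$ case. You propose to compute $N(T_1), N(T_2), N(T_4)$ and ``observe that exactly two distinct values arise, giving two semigroups.'' That inference is invalid in one direction: distinct gap counts imply distinct semigroups, but equal gap counts do not imply equal semigroups. From $N(T_1)=33$ and $N(T_2)=N(T_4)=31$ one can only conclude that there are at least two and at most three distinct semigroups for $q=3$. The paper closes exactly this hole by checking that the gap \emph{sets} (not merely their cardinalities) coincide for triangles of types $2$ and $4$. Your closing phrase about ``listing gaps directly'' gestures at the needed fix, but as stated your logic conflates ``same number of gaps'' with ``same semigroup.'' (Your $q=2$ treatment is fine, since there the count does pin down the gap set: the three unit vectors are always gaps because the genus is $1$, and $N(T_1)=N(T_3)=3$ forces these to be all of them.)
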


\begin{proof}

Let $d$ and $e$ be distinct divisors of $q+1$.
Then the Weierstrass gap sets of triangles of types $d$ and $e$ have the same size if and only if $h_d(q) = h_e(q)$, where
\[
h_d(q) = \frac{d-1}{12d} (q+1)(2q^2 - 5q + 5-d) .
\]
Elementary simplification reveals that this is equivalent to saying that $de = 2q^2 - 5q + 5$. 
But since $d$ and $e$ are distinct divisors of $q+1$, we have
\[
de \leq \frac 12 (q+1) \cdot (q+1) < 2q^2  - 5q + 5 ,
\]
provided that $q > 3$. 
Therefore, for $q>3$, triangles of distinct types have different Weierstrass semigroups.
The first part of theorem now follows from Corollary \ref{type_corollary}. 

For $q=2$, we have $N(T_1) = N(T_3) = 3$.
It follows that 
\[
H(P,Q,R) = \N^3 \setminus \{ (1,0,0), (0,1,0), (0,0,1) \}
\]
for any $P,Q,R \in \mc H(\F_4)$.

For $q=3$, we have $N(T_1) = 33$ and $N(T_2) = N(T_4) = 31$.
Moreover, one may check directly that the gap sets coincide for triangles of types 2 and 4. \qedhere

\end{proof}

For any Hermitian triangle $T = \{P,Q,R\}$, the proof of Proposition \ref{NTd} may be adapted to provide formulas for the number $N_r(T)$ of effective divisors $D = aP + bQ + cR$ which have basepoints at $r$ specified points of $T$, for $r = 1,2,3$. In particular, $N_3(T) = \#G_0(P,Q,R)$ is the number of pure gaps for $(P,Q,R)$, i.e., the number of divisors $D = aP+bQ+cR$ with basepoints at all three of the points $P$, $Q$, and $R$.

\begin{proposition}\label{Nr}
Let $T = \{P,Q,R\}$ be a Hermitian triangle of type $d$, and let 
\[
  h_d(q) = \frac{d-1}{12d} (q+1)(2q^2 - 5q + 5-d) .
\]
\begin{enumerate}[(a)]
\item The number of effective divisors of the form $aP + bQ + cR$ with one specified basepoint in $T$ is
\[
   N_1(T) = \frac{1}{24}(q^6 - 2q^5 + 4q^3 - q^2 - 2q) .
\]
\item The number of effective divisors of the form $aP + bQ + cR$ with two specified basepoints in $T$ is
\[
   N_ 2(T) = \frac{1}{120} (2q^6 + q^5 - 20q^4-25q^3+138q^2-96q)  + h_d(q).
\]
\item The number $N_3(T) = \#G_0(P,Q,R)$ of pure gaps for $(P,Q,R)$ is 
\[
  \#G_0(P,Q,R) = \frac{1}{120} (q^6+3q^5-15q^4-75q^3+254q^2-168q) + h_d(q) .
\]
\end{enumerate}
\end{proposition}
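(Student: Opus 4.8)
The plan is to count, for each nonempty subset $S\subseteq T=\{P,Q,R\}$, the effective divisors $D=aP+bQ+cR$ having a basepoint at every point of $S$, and then to specialize to $|S|=1,2,3$. By Proposition~\ref{basepoint} and its cyclic versions recorded in Corollary~\ref{corollary:basepoint_G}, a divisor in the residue class $(a,b,c)\equiv(i,j,k)\bmod m$, written $(a,b,c)=(i+a_1m,\,j+b_1m,\,k+c_1m)$, has a basepoint at $Q$, at $R$, or at $P$ according to whether $a_1+b_1+c_1\le s_d(i,j,k)$, $\le s_d(j,k,i)$, or $\le s_d(k,i,j)$, with $s_d$ as in \eqref{sd}. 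Hence the number of divisors in this class with basepoints at all points of $S$ is $\binom{\mu_S+3}{3}$, where $\mu_S=\mu_S(i,j,k)$ is the minimum of the corresponding cyclic shifts of $s_d$, and $N_r(T)=\sum_{0\le i,j,k<m}\binom{\mu_S+3}{3}$. By the $S_3$-invariance of Corollary~\ref{corollary:S3} this is independent of which $r$-element $S$ we choose. Moreover, since the regions $\{a_1+b_1+c_1\le v\}$ form a nested family of simplices, inclusion–exclusion applied in each residue class gives the identity $N(T)=3N_1(T)-3N_2(T)+N_3(T)$, with $N(T)$ the gap count of Proposition~\ref{NTd}. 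I would therefore compute $N_1$ and $N_3$ and recover $N_2$ from this identity.

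For $N_1$ I take $S=\{Q\}$, so $\mu_S=s_d(i,j,k)$ with no minimum, and first evaluate the collinear sum $N_1(T_1)=\sum_{i,j,k}\binom{s_1(i,j,k)+3}{3}$ by the same case analysis on $i\le j\le k$ used in Proposition~\ref{NT1}, with $s_1(i,j,k)=q-3-j+\delta_{j>i}+\delta_{j>k}$ for $j>0$. Passing to type $d$ via Lemma~\ref{s1sd}, the only classes with $s_d\neq s_1$ are $(q,0,q)$, where both binomials vanish; the diagonals $(i,i,i)$ with $i\not\equiv0\bmod d$, where the term rises by $\binom{q-i}{2}$; and the classes $(i,i+1,i)$ with $i\not\equiv0\bmod d$, where it falls by $\binom{q-i}{2}$. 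These two families contribute $\pm\binom{q-i}{2}$ over ranges of $i$ that agree except for the vanishing term $\binom{0}{2}$ at $i=q$, so the net correction is zero and $N_1(T)=N_1(T_1)$ is independent of $d$, as claimed.

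For $N_3=\#G_0(P,Q,R)$ I take $S=T$, so $\mu_S$ is the minimum of all three cyclic shifts; I again compute the collinear value $N_3(T_1)$ directly, and then analyze the type-$d$ correction. The essential point, parallel to Lemma~\ref{maxperm}, is to identify which permutation realizes the minimum. On the cyclic orbit of $(i,i+1,i)$ the unperturbed permutations $(i+1,i,i)$ and $(i,i,i+1)$ already attain the minimum value $q-3-i$, while the perturbed permutation $(i,i+1,i)$ carries the largest value $q-2-i$; after its $-1$ shift it only ties this minimum, leaving $\mu_S$ unchanged. On the diagonal $(i,i,i)$ all three shifts coincide and increase by $1$, so $\mu_S$ and the term rise by $\binom{q-i}{2}$. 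Summing over $i\not\equiv0\bmod d$ with $1\le i\le q$ reproduces $\sum\binom{q-i}{2}=h_d(q)$ exactly as at the end of Proposition~\ref{NTd}, giving $N_3(T)=N_3(T_1)+h_d(q)$. Finally $N_2(T)=N_1(T)+\tfrac13\bigl(N_3(T)-N(T)\bigr)$ by the inclusion–exclusion identity, which yields the stated formula together with its $+h_d(q)$ term.

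The main obstacle is the bookkeeping for the minimum in $N_3$: one must use the explicit values of $s_1$ on each cyclic permutation to be sure the $-1$ perturbation on the orbits of $(i,i+1,i)$ never strictly lowers the orbit minimum, and one must fix the ranges of $i$ in the two correction sums precisely so that the $N_1$ cancellation is exact while the $N_3$ sum recovers $h_d(q)$. The collinear base computations $N_1(T_1)$ and $N_3(T_1)$ are lengthy but entirely routine, mirroring Proposition~\ref{NT1}.
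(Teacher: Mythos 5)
Your proposal is correct, and its core setup is the paper's own: the paper's (sketched) proof also counts, in each residue class modulo $m$, the lattice points $(a_1,b_1,c_1)$ whose sum lies below the \emph{minimum} of the $r$ relevant cyclic shifts of $s_d(i,j,k)$, arriving at $N_r(T)=\sum_{i,j,k\bmod m}\min_\tau\binom{s_d^\tau(i,j,k)+3}{3}$, and then simply states that these minimums ``may be determined on a case-wise basis.'' You diverge in two ways, both defensible. First, the paper evaluates all three sums directly and records the identity $N(T)=3N_1(T)-3N_2(T)+N_3(T)$ only afterwards, as a remark (phrased there as an alternate route back to $N(T)$); you run that identity in reverse, using the already-established $N(T)$ of Propositions \ref{NT1} and \ref{NTd} to solve for $N_2$, thereby eliminating the pairwise-minimum case analysis entirely. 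Your justification---inclusion--exclusion inside each residue class applied to the three nested simplices, plus cyclic reindexing of the outer sum so that $N_1$ and $N_2$ do not depend on which points are specified---is sound and not circular, since $N(T)$ was proved independently of the $N_r$. Second, you organize $N_1$ and $N_3$ as collinear base values plus type-$d$ corrections read off from Lemma \ref{s1sd}, in the style of Proposition \ref{NTd}, and your bookkeeping checks out: for $N_1$ the $\pm\binom{q-i}{2}$ corrections cancel except for the vanishing term at $i=q$ (which is exactly why part (a) carries no $h_d(q)$), while for $N_3$ the orbits of $(i,i+1,i)$ leave the minimum $q-3-i$ untouched (the $-1$ perturbation on the shift valued $q-2-i$ only creates a tie with the two unperturbed shifts) and the diagonals $(i,i,i)$ with $i\not\equiv 0\bmod d$ contribute $\sum\binom{q-i}{2}=h_d(q)$. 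What remains---the explicit collinear sums $N_1(T_1)$ and $N_3(T_1)$---you defer as routine, which matches the level of detail of the paper's own sketch; so each route ends with comparable residual computation, but yours makes the $d$-dependence of all three formulas transparent and trims the case analysis, at the cost of leaning on Proposition \ref{NTd}, which the paper's uniform direct computation does not need.
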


\begin{proof}

As the proof is similar to that of Propositions \ref{NT1} and \ref{NTd}, we give only a sketch.
The setup for counting $N_r(T)$ is like that for $N(T)$, except that \eqref{gap_criterion} is replaced by
\[
  a_1 + b_1 + c_1 < \frac 1m \min_\tau ( \sigma_{\tau(i)\tau(j)}(T) - \tau(k)) ,
\]
where $\tau$ runs over $r$ distinct cyclic permutations of $(i,j,k)$ corresponding to the $r$ specified points of $T$.
In particular, we have 
\[
  N_r(T) = \sum_{i,j,k \bmod m} \min_{\tau} \binom{s_d^\tau(i,j,k)+3}{3} ,
\]
where $s_d(i,j,k)$ as defined as in \eqref{sd}.
These minimums may be determined on a case-wise basis.
\qedhere

\end{proof}

Note that these values of $N_r(T)$ in Proposition \ref{Nr} satisfy the inclusion-exclusion relationship
\[
  N(T) = 3N_1(T) - 3N_2(T) + N_3(T),
\]
with $N(T)$ as in Propositions \ref{NT1} and \ref{NTd}, so they could be used to provide an alternate, albeit longer, proof of the formula for $N(T)$.

\section{Minimal generating sets} \label{section:semigps}

In \cite{Matthews04}, the first author introduced the idea of minimal generating sets as a way of describing the Weierstrass semigroup of an $n$-tuple of points on a curve over a field $\F$.  
These sets generate the Weierstrass semigroup with respect to the operation of taking coordinate-wise maximimums.
We recount the relevant definitions and results here before using them to describe the semigroups of triples of rational points on the Hermitian curve. 

Consider the partial order on $\mathbb{N}^n$ given by $(u_1,\hdots,u_n) \leq (v_1,\hdots, v_n)$ if $u_i\leq v_i$ for all $i$.

\begin{definition*}
Let $u^{(1)},\hdots, u^{(t)}\in \mathbb{N}^n$, and write $u^{(k)}=(u^{(k)}_1, \ldots, u^{(k)}_n)$. The \textit{least upper bound} of $u^{(1)}, \ldots, u^{(t)}$ is given by 
\[
\lub(u^{(1)}, \ldots, u^{(t)}) = (\max\{ u^{(1)}_1,\ldots,u^{(t)}_1\},\hdots, \max\{u^{(1)}_n,\ldots,u^{(t)}_n\}) .
\]
\end{definition*}

Let $P_1,\hdots, P_n$ be distinct rational points on an absolutely irreducible, smooth, projective algebraic curve $\mathcal{X}$ over a finite field $\F$ with $n<|\F|$. Then the semigroup $H(P_1, \dots, P_n)$ is closed under addition as well as under the operation $\lub$ \cite{CarvalhoTorres}. Indeed, the requirement that $|\F|>n$ ensures that given functions $f_1, \dots, f_t \in \F(\mathcal{X})$ with pole divisors $(f_k)_{\infty}=u^{(k)}$, there is a linear combination $f=\sum_{k=1}^t a_k f_k$ with $a_k \in \F$ and $(f)_{\infty}=\lub(u^{(1)},\hdots, u^{(t)} )$. 

We now define a subset $\Gamma^+(P_{i_1},\ldots,P_{i_\ell})$ of $H(P_1,\ldots,P_n)$ for each $\{i_1,\ldots,i_\ell\} \subseteq \{1,\ldots,n\}$ as follows.
Let $\Z_{>0}$ denote the set of positive integers.
For $\ell = 1$ and $i \in \{1,\ldots,n\}$, let $\Gamma^+(P_i) = H(P_i)$, and for $\ell \geq 2$, let
\[
\Gamma^+(P_{i_1}, \ldots, P_{i_\ell}) =
\left\{
u \in \Z_{>0}^\ell :
\begin{array}{c}
 u \text{ is a minimal element of }\\
 \{v \in H(P_{i_1}, \ldots, P_{i_\ell}) : v_i = u_i\}\\
 \text{ for some }i \in \{1,\ldots,\ell\} \end{array}
\right\} .
\]
We note that although the one-point semigroups $\Gamma^+(P_i)$ are infinite, each set $\Gamma^+(P_{i_1}, \ldots, P_{i_\ell})$ with $\ell \geq 2$ is a subset of the product $G(P_{i_1}) \times \cdots \times G(P_{i_\ell})$ of gap sets, hence is finite.

\begin{example*}
For any two rational points $P, Q$ on the Hermitian curve, we have
\begin{align*}
\Gamma^+(P) &= \{ iq + j(q+1) : i,j \in \N \} ,  \\
\Gamma^+(P,Q) &= \{ (-i+jm, iq-jm) : 0 \leq i \leq q,  i < jm < iq\} .
\end{align*}
\end{example*}

For each $I \subseteq \{1,\hdots,n\}$ let $\iota_I$ denote the natural inclusion $\mathbb{N}^{\ell} \to \mathbb{N}^n$ into the coordinates indexed by $I$.

\begin{definition*}
The \textit{minimal generating set} of $H(P_1,\hdots, P_n)$ is 
\[
\Gamma(P_1,\hdots, P_n) 
= \bigcup_{\ell=1}^n \bigcup_{\substack{I=\{i_1,\hdots,i_\ell\} \\ i_1 < \cdots < i_\ell}} \iota_I(\Gamma^+(P_{i_1},\hdots,P_{i_\ell})).
\]
\end{definition*}

The Weierstrass semigroup $H(P_1,\hdots, P_n)$ is completely determined by the minimal generating set $\Gamma(P_1,\hdots,P_n)$ on those same points, as described below. 

\begin{theorem}\cite[Theorem 7]{Matthews04}
If $1\leq n < |\F|$, then
\[
H(P_1,\hdots,P_n) = \{\lub( u^{(1)},\ldots,  u^{(n)}):  u^{(1)},\hdots, u^{(n)} \in\Gamma(P_1,\hdots,P_n)\}.
\]
\end{theorem}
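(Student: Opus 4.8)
The statement to prove is the characterization of the Weierstrass semigroup $H(P_1,\ldots,P_n)$ in terms of the minimal generating set $\Gamma(P_1,\ldots,P_n)$, namely that every element of the semigroup arises as a least upper bound of $n$ elements drawn from $\Gamma$. Since this is cited as \cite[Theorem 7]{Matthews04}, the plan is to reconstruct the argument from the structural facts already in hand. The inclusion ``$\supseteq$'' is the easy direction: because $H(P_1,\ldots,P_n)$ is closed under the $\lub$ operation (as recalled above, using $n < |\F|$ so that a suitable $\F$-linear combination of functions realizes the coordinatewise-maximal pole divisor), and because $\Gamma(P_1,\ldots,P_n)$ consists of elements of $H$ (or, in the one-point case, of the full one-point semigroup embedded via $\iota_I$), any $\lub$ of members of $\Gamma$ again lies in $H$. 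So the whole content is the reverse inclusion.

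\textbf{Main steps.} First I would fix an arbitrary $u = (u_1,\ldots,u_n) \in H(P_1,\ldots,P_n)$ and aim to exhibit, for each coordinate $r \in \{1,\ldots,n\}$, an element $w^{(r)} \in \Gamma(P_1,\ldots,P_n)$ whose $r$-th coordinate equals $u_r$ and which is coordinatewise $\leq u$; then $\lub(w^{(1)},\ldots,w^{(n)})$ will recover $u$ exactly, since each coordinate is matched by at least one $w^{(r)}$ and dominated everywhere. To produce $w^{(r)}$, I would look at the support $I = \{i : u_i > 0\}$ and distinguish whether $r \in I$. If $r \notin I$, the element is supplied through a one-point generator $\Gamma^+(P_r) = H(P_r)$ embedded by $\iota_{\{r\}}$, giving a vector supported only in coordinate $r$ with value $u_r$, which is trivially $\leq u$ in the remaining coordinates (all zero there). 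If $r \in I$, the key is to descend within $H(P_{i_1},\ldots,P_{i_\ell})$ (where $\{i_1,\ldots,i_\ell\} = I$) to a minimal element with prescribed $r$-th coordinate: starting from the restriction of $u$ to the coordinates in $I$, one lowers the other coordinates as far as possible while staying in the semigroup and keeping the $r$-th entry fixed, landing on a minimal element, which by definition lies in $\Gamma^+(P_{i_1},\ldots,P_{i_\ell})$ and hence, after applying $\iota_I$, in $\Gamma$.

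\textbf{The main obstacle.} The delicate point is verifying that the descent in the $r \in I$ case genuinely terminates at an element of $\Gamma^+(P_{i_1},\ldots,P_{i_\ell})$ while remaining coordinatewise $\leq u$, and that its support is exactly $I$ (so that the entries indexed by $I$ are all strictly positive, matching the $\Z_{>0}^\ell$ requirement in the definition of $\Gamma^+$). One must check that minimal elements of $\{v \in H(P_{i_1},\ldots,P_{i_\ell}) : v_r = u_r\}$ have all coordinates positive---this uses that $u$ already has all $I$-coordinates positive and that lowering a coordinate to zero would typically leave the fixed-$r$ slice, or else be further reducible, contradicting minimality---and that such a minimal element exists at all, which follows from the partial order being a well-founded order on $\N^\ell$ (no infinite strictly decreasing chains). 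I expect the bookkeeping around the embeddings $\iota_I$ and the positivity/support conditions to be where the care is needed; the finiteness of each $\Gamma^+$ with $\ell \geq 2$, already noted above, guarantees the minimal elements are honest gap-tuples and keeps the argument from degenerating.
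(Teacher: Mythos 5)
The paper does not actually prove this statement; it is quoted verbatim from \cite[Theorem 7]{Matthews04}, so there is no in-paper proof to compare against, and your proposal must be judged on its own merits. Your overall strategy is the natural (and, as far as the structure of the cited result goes, the standard) one: the inclusion ``$\supseteq$'' follows from $\lub$-closure of $H(P_1,\ldots,P_n)$ (using $n<|\F|$) together with $\iota_I(\Gamma^+(P_{i_1},\ldots,P_{i_\ell}))\subseteq H(P_1,\ldots,P_n)$; for ``$\subseteq$'' you produce, for each coordinate $r$, a witness $w^{(r)}\in\Gamma$ with $w^{(r)}\leq u$ and $w^{(r)}_r=u_r$, and take the $\lub$. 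The descent itself is sound: $\leq$ is well-founded on $\N^\ell$, and a minimal element of $\{v\in H : v_r=u_r,\ v\leq u\}$ is automatically minimal in the full slice $\{v\in H : v_r=u_r\}$, since anything below it is also below $u$.

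There is, however, a genuine flaw in the step where you assert that minimal elements of $\{v\in H(P_{i_1},\ldots,P_{i_\ell}) : v_r=u_r\}$ have all coordinates positive. This is false. Take $\ell=2$ and $u=(a,b)\in H(P,Q)$ with $a,b>0$ such that $a$ is a non-gap at $P$: then $(a,0)\in H(P,Q)$, it satisfies $(a,0)\leq u$, and it is trivially minimal in the slice $\{v : v_1=a\}$, yet it lies outside $\Z_{>0}^2$ and hence outside $\Gamma^+(P,Q)$. Positivity of the coordinates of $u$ is irrelevant to this, and your stated reason fails: lowering a coordinate other than $r$ to zero does \emph{not} leave the fixed-$r$ slice, and having a zero coordinate does not contradict minimality. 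The repair is to work with the support of the minimal element $w$ rather than the support $I$ of $u$. Let $J=\{j : w_j>0\}$; note $r\in J$ when $u_r>0$. The restriction $w|_J$ lies in the semigroup $H\bigl(P_{j_1},\ldots,P_{j_s}\bigr)$ of the points indexed by $J=\{j_1,\ldots,j_s\}$ and is minimal in the corresponding slice there, since any smaller slice element would embed via $\iota_J$ and contradict the minimality of $w$. Hence $w|_J\in\Gamma^+\bigl(P_{j_1},\ldots,P_{j_s}\bigr)$ when $s\geq 2$, while if $J=\{r\}$ then $w=\iota_{\{r\}}(u_r)$ with $u_r\in H(P_r)=\Gamma^+(P_r)$. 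Either way $w\in\Gamma$ --- and this is precisely why $\Gamma$ is defined as a union over \emph{all} subsets $I\subseteq\{1,\ldots,n\}$, not just over the support of $u$. With this correction, your argument goes through.
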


To characterize the minimal generating set for Weierstrass semigroups of Hermitian triangles, we use the following lemma of Castellanos and Tizziotti. 

\begin{lemma}\cite[Lemma 2.6]{Cas_Tiz}\label{mgs_lemma}
Let $\alpha=(\alpha_1,\hdots,\alpha_n)\in \Z_{>0}^n$. Then $\alpha \in\Gamma^+(P_1,\hdots,P_n)$ if and only if the divisor $\alpha_1P_1+\cdots+\alpha_n P_n$ is a discrepancy with respect to $P$ and $Q$ for all pairs of points $P,Q\in \{P_1,\hdots,P_n\}$.
\end{lemma}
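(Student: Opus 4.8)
The plan is to translate the statement entirely into the language of basepoints and then reduce it to a single dichotomy on pole orders. Write $D = \alpha_1 P_1 + \cdots + \alpha_n P_n$. I will use three preliminary reductions. First, $\alpha \in H(P_1,\ldots,P_n)$ if and only if $D$ has no basepoint at any $P_k$, i.e. $L(D-P_k) \neq L(D)$ for all $k$: the forward direction is witnessed by a function with $(f)_\infty = D$, and the converse is the $\lub$-type argument from the discussion above, where $n < |\F|$ guarantees a single linear combination realizing a pole of exact order $\alpha_k$ at every $P_k$ simultaneously. Second, unwinding the definition, $D \in \Delta(P_a,P_b)$ says exactly that $D$ has no basepoint at $P_a$ while $D-P_b$ has a basepoint at $P_a$; by the symmetry of $\Delta(P_a,P_b)$ exhibited in the diagram following its definition, this is equivalent to the same statement with $a$ and $b$ interchanged. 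Thus ``$D$ is a discrepancy for all pairs'' unpacks to: $\alpha \in H$, together with the condition that $D-P_b$ has a basepoint at $P_a$ for every ordered pair $a\neq b$.

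The core step is the following translation, to be proved for $\alpha \in H$: the point $\alpha$ is a minimal element of the slice $S_i = \{\, v \in H : v_i = \alpha_i \,\}$ if and only if $D-P_j$ has a basepoint at $P_i$ for every $j \neq i$. Both directions pass between a function and its pole divisor. A function $g \in L(D-P_j)$ with a pole of exact order $\alpha_i$ at $P_i$ has pole divisor $\beta = (g)_\infty$ satisfying $\beta \in H$, $\beta \leq \alpha$, $\beta_i = \alpha_i$, and $\beta_j < \alpha_j$, which is precisely a witness that $\alpha$ is not minimal in $S_i$; conversely, any $\beta \in S_i$ with $\beta < \alpha$ has $\beta_j < \alpha_j$ for some $j\neq i$, and a function with $(g)_\infty = \beta$ then lies in $L(D-P_j)$ with a pole of exact order $\alpha_i$ at $P_i$, so $D-P_j$ has no basepoint at $P_i$.

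With this in hand the backward direction is immediate: if $D$ is a discrepancy for all pairs, then fixing any $i$ gives that $D-P_j$ has a basepoint at $P_i$ for all $j\neq i$, so $\alpha$ is minimal in $S_i$, and since $\alpha \in \Z_{>0}^n$ by hypothesis, $\alpha \in \Gamma^+(P_1,\ldots,P_n)$. For the forward direction, assume $\alpha \in \Gamma^+$, so $\alpha \in H$ and $\alpha$ is minimal in some $S_i$; the translation yields that $D-P_j$ has a basepoint at $P_i$ for all $j\neq i$. Applying the identity $\dim L(E) - \dim L(E-P) - \dim L(E-Q) + \dim L(E-P-Q)$, which is manifestly symmetric in $P$ and $Q$, to $E=D$, $P=P_i$, $Q=P_j$, together with $\alpha \in H$, upgrades these to the dual facts that $D-P_i$ has a basepoint at $P_j$ for every $j\neq i$. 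These two families of facts settle every pair $(a,b)$ that involves the index $i$.

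The remaining work, which I expect to be the main obstacle, is to handle pairs $(a,b)$ with $a,b\neq i$, since the definition of $\Gamma^+$ only supplies minimality in the single slice $S_i$. Fix such a pair and suppose toward a contradiction that $D-P_b$ has no basepoint at $P_a$; choose $g \in L(D-P_b)$ with a pole of exact order $\alpha_a$ at $P_a$, and set $\beta = (g)_\infty \leq \alpha$, so that $\beta \in H$, $\beta_a = \alpha_a$, and $\beta_b < \alpha_b$. Now examine $\beta_i$: if $\beta_i = \alpha_i$ then $\beta \in S_i$ with $\beta < \alpha$, contradicting minimality in $S_i$; while if $\beta_i < \alpha_i$ then $g \in L(D-P_i)$ still has a pole of exact order $\alpha_a$ at $P_a$ (recall $a \neq i$), contradicting the fact that $D-P_i$ has a basepoint at $P_a$. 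Either alternative is impossible, so $D-P_b$ has a basepoint at $P_a$; combined with $\alpha \in H$ this gives $D \in \Delta(P_a,P_b)$ for every pair, completing the proof.
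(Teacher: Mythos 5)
The paper itself offers no proof of this lemma --- it is imported from Castellanos--Tizziotti \cite[Lemma 2.6]{Cas_Tiz} (with the positivity hypothesis added) --- so your argument can only be judged on its own merits, and on those it is correct and complete. Your three reductions are all sound: the equivalence of $\alpha \in H(P_1,\ldots,P_n)$ with $D$ being basepoint-free at every $P_k$ (the converse correctly leaning on the $\lub$ linear-combination argument, which is where the standing hypothesis $n < |\F|$ enters); the unpacking of ``$D \in \Delta(P_a,P_b)$ for every pair'' into $\alpha \in H$ plus the condition that $D - P_b$ has a basepoint at $P_a$ for every ordered pair, via the symmetry of $\Delta(P,Q)$; and the slice-minimality translation, which sets up an exact correspondence between elements $\beta < \alpha$ of the slice $S_i$ and functions $g \in L(D-P_j)$ with pole of exact order $\alpha_i$ at $P_i$. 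You also correctly identified the real obstacle: the definition of $\Gamma^+$ only gives minimality in a single slice $S_i$, while the discrepancy condition quantifies over all pairs, including those avoiding $i$. Your dichotomy on $\beta_i$ resolves this cleanly --- if $\beta_i = \alpha_i$ then $\beta$ lies in $S_i$ strictly below $\alpha$, contradicting minimality, and if $\beta_i < \alpha_i$ then $g \in L(D-P_i)$ retains its exact pole order $\alpha_a$ at $P_a$, contradicting the basepoint of $D - P_i$ at $P_a$ obtained from the symmetry upgrade. One further point in your favor: positivity of $\alpha$ is invoked exactly once, for membership in $\Gamma^+ \subseteq \Z_{>0}^n$ in the backward direction, which is precisely consistent with the paper's remark following the lemma that a vector with a zero entry can satisfy the discrepancy condition while failing (by definition) to lie in $\Gamma^+$.
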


We note that the condition that $\alpha$ have all positive entries is not explicitly stated in the original version of the lemma, but it is necessary. Indeed, it is possible for $\alpha$ to have a zero entry yet correspond to a divisor which is a discrepancy for any pair of points in $\{P_1,\ldots,P_n\}$.

For $P,Q$ two rational points on $\mc H$, it is shown in \cite[Section 4]{Duursma_Park} that the set of discrepancies for $P$ and $Q$ of the form $aP+bQ$ is
\[
  \Delta_0(P,Q) = \{ i(qQ - P) + j(mP-mQ) : 0 \leq i \leq q, j \in \Z \}.
\]
It follows that the description of $\Gamma^+(P,Q)$ in the example above, which was originally derived in \cite[Theorem 3.7]{pairs} by other means, may be obtained by an application of Lemma \ref{mgs_lemma}.

\begin{proposition}\label{min_gen_set}
Let $T=\{P,Q,R\}$ be a Hermitian triangle of type $d$.
Let $(a,b,c)\in \Z_{>0}^3$, and write $a=i+a_1m$, $b=j+b_1m$,  and $c=k+c_1m$ with $0\leq i,j,k < m$. 
Then $(a,b,c)\in \Gamma^+(P,Q,R)$ if and only if $a_1+b_1+c_1=q-2-\ell$ for some $\ell \in \{0,\ldots,q-2\}$, and 
\begin{itemize}
\item $(i,j,k) = (\ell,\ell,\ell)$ if $\ell \equiv 0 \bmod d$, or 
\item $(i,j,k)$ is a permutation of $(\ell+1,\ell,\ell)$ if $\ell \not\equiv 0 \bmod d$.
\end{itemize} 
\end{proposition}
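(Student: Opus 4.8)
The plan is to apply Lemma \ref{mgs_lemma}, which reduces membership in $\Gamma^+(P,Q,R)$ to checking that the divisor $D = aP + bQ + cR$ is a discrepancy for all three pairs of points drawn from $\{P,Q,R\}$. By Proposition \ref{basepoint}, being a discrepancy for the ordered pair $(Q,R)$ means that $D$ has a basepoint at $Q$ but that removing $R$ does not create or destroy a basepoint at $Q$; concretely, $D \in \Delta(Q,R)$ precisely when $\sigma_{ab}(T) > c$ \emph{and} $\sigma_{a,b}$ evaluated with $c$ replaced by $c-1$ fails the strict inequality, i.e. $\sigma_{ab}(T) = c+1$. Translating through the reduction $\sigma_{ab} = \sigma_{ij} - a_1 m - b_1 m$ from \eqref{eqn:sigma_ijm}, the condition that $D$ be a discrepancy for the pair corresponding to one cyclic orientation becomes an exact equality of the form $\sigma_{ij}(T) - k = (a_1 + b_1 + c_1 + 1)m$, rather than the inequality that governed mere basepoints in the gap count. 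I would first record this equality reformulation carefully for each of the three cyclic orderings, since all three must hold simultaneously.

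Next I would feed in the explicit values of $\sigma_{ij}(T)$ supplied by Theorem \ref{thm:sigma} and the piecewise description \eqref{eqn:sigma_piecewise}. For $T$ of type $d$, using $2g-2 = (q-2)m$, the quantity $\sigma_{ij}(T_d) - k$ reduces modulo $m$ to $j - k + \epsilon_{ij}$, where $\epsilon_{ij} \in \{-1,0,1\}$ is the correction from \eqref{sigma_diff}. The requirement that $\sigma_{ij}(T) - k$ be an exact positive multiple of $m$ forces $j - k + \epsilon_{ij} \equiv 0 \bmod m$, and since $0 \le i,j,k < m$ this pins down the residue triple $(i,j,k)$ very tightly. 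Imposing this discrepancy condition for all three cyclic pairs simultaneously is what cuts the possibilities down to exactly the two listed families: the diagonal $(\ell,\ell,\ell)$ with $\ell \equiv 0 \bmod d$, where all three $\epsilon$ values vanish, and the permutations of $(\ell+1,\ell,\ell)$ with $\ell \not\equiv 0 \bmod d$, where the $\pm 1$ corrections from $\Lambda_d^+$ and $\Lambda_d^-$ conspire to make the three equalities compatible. The congruence $\ell \equiv 0 \bmod d$ versus $\ell \not\equiv 0 \bmod d$ is exactly the dichotomy that selects which of these two shapes can occur, mirroring the case split in Lemma \ref{s1sd}.

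Once the admissible residue classes $(i,j,k)$ are identified, the integer part $a_1 + b_1 + c_1$ is determined by the common value of the multiple of $m$: solving $\sigma_{ij}(T) - k = (a_1+b_1+c_1+1)m$ using $\sigma_{ij}(T_1) = 2g-2-jq = (q-2)m - jq$ gives $a_1 + b_1 + c_1 = q - 2 - \ell$ after substituting the appropriate value of $j$ (namely $j = \ell$ or $j=\ell$ up to the permutation). The constraint $a,b,c \in \Z_{>0}$ together with $0 \le \ell$ and the nonnegativity of $a_1+b_1+c_1$ forces $\ell \in \{0,\ldots,q-2\}$, which matches the stated range. I would then verify the converse: that any $(a,b,c)$ of the prescribed form does satisfy the three discrepancy equalities, so that Lemma \ref{mgs_lemma} applies in both directions.

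The main obstacle I anticipate is bookkeeping the three cyclic orderings at once. Each pair $(Q,R)$, $(R,P)$, $(P,Q)$ carries its own $\sigma$-function and its own $\epsilon$-correction, and a residue triple must lie in the correct one of $\Lambda_d$, $\Lambda_d^+$, or $\Lambda_d^-$ with respect to \emph{each} ordering simultaneously. For instance, for a permutation of $(\ell+1,\ell,\ell)$ one ordering sees a $+1$ correction (diagonal-type entry) while another sees a $-1$ correction (the $j \equiv i+1$ entry), and one must confirm these are mutually consistent rather than contradictory. Handling the boundary residues near $j \in \{0,q\}$ and the degenerate case $\ell = 0$ (where the collinear formula \eqref{eqn:sigma_piecewise} has the separate $j=0$ branch) will require care, but these are the same edge cases already dispatched in Lemmas \ref{s1sd} and \ref{maxperm}, so I would lean on that analysis rather than redo it.
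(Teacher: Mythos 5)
Your outline follows the same skeleton as the paper's proof---invoke Lemma \ref{mgs_lemma}, convert ``discrepancy for every pair'' into equalities for the $\sigma$-functions, reduce modulo $m$ via \eqref{eqn:sigma_ijm} and Theorem \ref{thm:sigma}, and pin down the residues $(i,j,k)$---but the keystone translation step is wrong, and not just cosmetically. By definition, $D = aP+bQ+cR \in \Delta(Q,R)$ requires $L(D) \neq L(D-Q)$ \emph{and} $L(D-R) = L(D-Q-R)$; that is, $Q$ is \emph{not} a basepoint of $D$, while $Q$ \emph{is} a basepoint of $D-R$. You have this backwards (``$D$ has a basepoint at $Q$''), and the two conditions you then write down, $\sigma_{ab}(T) > c$ together with the failure of the strict inequality for $c-1$, are in fact mutually contradictory, so they cannot characterize anything; the conclusion $\sigma_{ab}(T) = c+1$ follows from neither. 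Running Proposition \ref{basepoint} on the correct pair of conditions gives $\sigma_{ab}(T) \leq c$ and $\sigma_{ab}(T) \geq c$, i.e.\ $\sigma_{ab}(T) = c$---which is in any case immediate from the definition of $\sigma_{ab}$ and the uniqueness statement in Lemma \ref{sigmaB}, with no need to route through basepoints at all. Consequently your master equation $\sigma_{ij}(T) - k = (a_1+b_1+c_1+1)m$ is off by $m$ from the correct criterion $\sigma_{ij}(T)-k = \sigma_{jk}(T)-i = \sigma_{ki}(T)-j = m(a_1+b_1+c_1)$, which is \eqref{eqn:mgs_criterion} in the paper.

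This off-by-one error propagates to a contradiction with the very statement you are proving: in the diagonal case $(i,j,k)=(\ell,\ell,\ell)$ with $\ell \geq 1$ one has $\sigma_{\ell\ell}(T)-\ell = (q-2)m - \ell q - \ell = (q-2-\ell)m$, so your equation would force $a_1+b_1+c_1 = q-3-\ell$ rather than $q-2-\ell$; your third paragraph silently reverts to the correct normalization, so the proposal is internally inconsistent and, as written, would not close. Once the criterion is repaired, the rest of your plan is sound and is essentially the paper's argument: the congruence $\sigma_{ij}(T)-k \equiv j-k+\epsilon_{ij} \bmod m$ from \eqref{jkeps} forces either $i=j=k$ with $\epsilon_{ii}\equiv 0$ (hence $\ell \equiv 0 \bmod d$), or, after a cyclic shift putting $j<i$, $\epsilon_{ij}=0$, whence $j=k$ and then $\epsilon_{ji}=-1$, i.e.\ $(i,j,k)$ is a cyclic shift of $(\ell+1,\ell,\ell)$ with $\ell \not\equiv 0 \bmod d$; sufficiency is a direct check against \eqref{eqn:sigma_piecewise}. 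One last remark: the obstacle you anticipate---that each pair carries ``its own $\epsilon$-correction'' which might be inconsistent---is already dispatched by the well-definedness part of Theorem \ref{thm:sigma}: the function $(i,j)\mapsto\sigma_{ij}(T)$ is independent of the ordering of $(P,Q,R)$, so all three conditions in \eqref{eqn:mgs_criterion} use one and the same $\epsilon_{ij}$, merely with cyclically permuted indices.
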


\begin{proof}
Lemma \ref{mgs_lemma} implies that $(a,b,c)\in \Gamma^+(P,Q,R)$ exactly when $aP+bQ+cR$ is a discrepancy for all pairs of points of $T=\{P,Q,R\}$. In the notation introduced in Section \ref{sec:sigma}, this means that $(a,b,c)\in\Gamma^+(P,Q,R)$ if and only if 
\[
\begin{array}{cccc}
\sigma_{ab}(T)=c, & \sigma_{bc}(T)=a, & \text{and} & \sigma_{ca}(T)=b.
\end{array}
\]
By \eqref{eqn:sigma_ijm}, this is equivalent to 
\begin{equation}\label{eqn:mgs_criterion}
\sigma_{ij}(T)-k = \sigma_{jk}(T)-i = \sigma_{ki}(T)-j = m(a_1+b_1+c_1).
\end{equation}

By Theorem \ref{thm:sigma} and \eqref{eqn:sigma_piecewise}, we have
\begin{equation}\label{jkeps}
\sigma_{ij}(T)-k  \equiv j-k + \epsilon_{ij}(T) \mod m
\end{equation}
for all $0 \leq i,j,k < m$. 
Using this, one may check directly that \eqref{eqn:mgs_criterion} is satisfied when $(a,b,c)$ is of one of the two forms described in the statement of the proposition.

Now assume that $(i,j,k)$ satisfies \eqref{eqn:mgs_criterion}.
Then the congruence \eqref{jkeps} holds for any cyclic permutation of $(i,j,k)$.
If $i=j=k$, then $\sigma_{ii}(T)-i \equiv \epsilon_{ii} \bmod m$, so that $i \equiv 0 \bmod d$. Then setting $\ell = i$, we are in the first case in the statement of the proposition.

Now assume that $i,j,k$ are not all equal.
Then by permuting $(i,j,k)$ if necessary, we may assume that $j < i$.
Then in particular, $j$ is congruent to neither $i$ nor $i+1$ modulo $m$, so that $\epsilon_{ij} = 0$. But then \eqref{jkeps} implies that $j = k$.
Now consider the fact that
\[
  \sigma_{jk}(T) - i \equiv i - j + \epsilon_{ji} \equiv 0 \mod m .
\]
Since $i \neq j$, it follows that $\epsilon_{ji} \not\equiv 0 \bmod m$ and $\epsilon_{ji} \neq +1$.
Thus, $\epsilon_{ji} = -1$, and so $i \equiv j+1 \bmod m$ with $j \not\equiv 0 \bmod d$.
Setting $\ell = j$, we are in the second case in the statement of the proposition.

In either case, $\sigma_{ij}(T) - k = m(a_1+b_1+c_1)$ yields $a_1 + b_1 + c_1 = q-2-\ell$. \qedhere

\end{proof}

\bibliography{hermitian_semigroups}{}
\bibliographystyle{plain}

\end{document}